\documentclass[a4paper, reqno]{amsart}
\usepackage{amsmath, amssymb, eucal, amscd, amstext, enumerate, mathrsfs, yfonts, amsfonts, verbatim, tikz, relsize}

\usepackage[plainpages=false,colorlinks,hyperindex,pdfpagemode=None,bookmarksopen,linkcolor=red,citecolor=blue,urlcolor=blue]{hyperref}
\usepackage{pdflscape}

\topmargin -0in
\textwidth 6.25in
\textheight 8.5in
\oddsidemargin -0.2in
\evensidemargin -0.2in

\newtheorem{thm}{Theorem}[section]
\newtheorem{lem}[thm]{Lemma}
\newtheorem{eg}[thm]{Example}
\newtheorem{prop}[thm]{Proposition}
\newtheorem{prop-defn}[thm]{Proposition and Definition}
\newtheorem{cor}[thm]{Corollary}
\newtheorem{defn}[thm]{Definition}
\newtheorem{rem}[thm]{Remark}

\numberwithin{equation}{section}

\newcommand{\ti}{\tilde}
\newcommand{\smnoind}{\smallskip\noindent}
\newcommand{\la}{\langle}
\newcommand{\ra}{\rangle}

\newcommand{\BR}{\mathbb{R}}
\newcommand{\RP}{\mathbb{R}^+}
\newcommand{\BN}{\mathbb{N}}
\newcommand{\BC}{\mathbb{C}}

\newcommand{\CL}{\mathcal{L}}
\newcommand{\CS}{\mathcal{S}}
\newcommand{\CN}{\mathcal{N}}

\newcommand{\KI}{\mathfrak{I}}
\newcommand{\KH}{\mathfrak{H}}
\newcommand{\KK}{\mathfrak{K}}
\newcommand{\sa}{\mathrm{sa}}

\newcommand{\cb}{\mathrm{cb}}
\newcommand{\CB}{\mathrm{CB}}
\newcommand{\CPB}{\mathrm{CB}^+}

\newcommand{\WQ}{\mathcal{WQ}}
\newcommand{\WS}{\mathcal{WS}}
\newcommand{\Ucp}{\mathrm{UCP}}
\newcommand{\Ucpw}{\mathrm{UCP}_w}
\newcommand{\Ccpw}{\mathrm{CC}^+_w}

\newcommand{\Pc}{\mathrm{Pc}}
\newcommand{\Pcw}{\mathrm{Pc}_w}

\newcommand{\rd}{\mathrm{d}}
\newcommand{\Mor}{\CB^+}
\newcommand{\Morw}{\CB^+_w}
\newcommand{\Morc}{\mathrm{CC}^+}
\newcommand{\rn}{\lambda}
\newcommand{\reg}{\mathrm{reg}}
\newcommand{\wu}{\mathrm{wu}}

\begin{document}

\title{Dual spaces of operator systems}

\author{Chi-Keung Ng}

\address[Chi-Keung Ng]{Chern Institute of Mathematics and LPMC, Nankai University, Tianjin 300071, China.}
\email{ckng@nankai.edu.cn}

\date{\today}

\keywords{operator systems, duality}
\subjclass[2010]{Primary: 46L07, 47L07, 47L25, 47L50}

\begin{abstract}
The aim of this article is to give an infinite dimensional analogue of a result of Choi and Effros concerning dual spaces of finite dimensional unital operator systems.  

An 
(not necessarily unital) \emph{operator system} is a self-adjoint subspace of $\mathcal{L}(\mathfrak{H})$, equipped with the induced matrix norm and the induced matrix cone. 
We say that an operator system $T$ is \emph{dualizable} if one can find an equivalent dual matrix norm on the dual space $T^*$ such that under this dual matrix norm and the canonical dual matrix cone, $T^*$ becomes a dual operator system. 

We show that \textcolor{magenta}{a complete} operator system $T$ is dualizable if and only if the ordered normed space $M_\infty(T)^\mathrm{sa}$ satisfies a form of bounded decomposition property. 
In this case, 
$$\|f\|^\mathrm{d}:= \sup \big\{\big\|[f_{i,j}(x_{k,l})]\big\|: x\in M_n(T)^+; \|x\|\leq 1; n\in \mathbb{N}\big\} \quad (f\in M_m(T^*); m\in \mathbb{N}),$$
is the largest dual matrix norm that is equivalent to and dominated by the original dual matrix norm on $T^*$ that turns it into a dual operator system, denoted by $T^\mathrm{d}$.
It can be shown that $T^\mathrm{d}$ is again dualizable.

For every completely positive completely bounded map $\phi:S\to T$ between dualizable operator systems, there is a unique weak-$^*$-continuous completely positive completely  bounded map $\phi^\mathrm{d}:T^\mathrm{d} \to S^\mathrm{d}$ which is compatible with the dual map $\phi^*$. 
From this,  we obtain a full and faithful functor from the category of dualizable operator systems to that of dualizable dual operator systems.

Moreover, we will verify that that if $S$ is either a $C^*$-algebra or a unital \textcolor{magenta}{complete} operator system, then $S$ is dualizable and the canonical weak-$^*$-homeomorphism from the unital complete operator system $S^{**}$ to the operator system $(S^\mathrm{d})^\mathrm{d}$ is a completely isometric complete order isomorphism. 

Furthermore, via the duality functor above, the category of $C^*$-algebras and that of unital \textcolor{magenta}{complete} operator systems (both equipped with completely positive complete contractions as their morphisms) can be regarded as full subcategories of the category of dual operator systems (with weak-$^*$-continuous completely positive complete contractions as 
its morphisms). 

Consequently, a nice duality framework for operator systems is obtained, which includes all $C^*$-algebras  and all unital \textcolor{magenta}{complete}  operator systems.
\end{abstract}

\maketitle

\section{Introduction}

\medskip

Operator systems is an important subject in operator algebras. 
This subject recently gained more attention because of the study of ``spectral truncation'', as introduced by Connes and van Suijlekom in \cite{CvD}. 
In particular, finite dimensional unital operator systems of Toeplitz matrices were considered in \cite{CvD}, and ``operator system duals'' of them (coming from a result of Choi and Effros; see \cite[Corollary 4.5]{CE}) were studied (see \cite{Far} for further developments on this direction). 

\medskip

Another interest on the duality of finite dimensional unital operator systems comes from its connection with quantum information theory. 
In particular, the notion of ``graph operator systems'' was introduced in \cite{DSW} to obtain quantum versions of some theories of Shannon. 
Moreover, ``dual graph operator systems'' for finite graphs, which are ``operator system duals'' (defined using the result of Choi and Effros in \cite[Corollary 4.5]{CE}) of graph systems were studied in \cite[\S 3.5]{GMS}. 

\medskip

Since infinite dimensional operator systems are also drawing some attention in quantum information theory (see e.g. \cite{CJ, Yashin}), it seems that an analogue of \cite[Corollary 4.5]{CE} in the infinite dimensional case could be a useful tool. 
This is one motivation of this article.

\medskip

The second motivation of this article is to set up  a nice duality framework of infinite dimensional operator systems, so that it can be served as a basis for our possible further study on dualities for tensor products of operator systems (see basic theory for tensor products of operator systems can be found in, e.g., \cite{KPTT,LN}). 

\medskip

Note that there has been a study on duality of infinite dimensional operator systems in \cite{Han}, but the concern of the current article is different from \cite{Han}, in the sense that we are interested in the case when the dual space is a \emph{dual} quasi-operator system (see Definition \ref{defn:dual-quasi-op-sys}) instead of a quasi-operator system as in \cite{Han}. 

\medskip

Notice that, in the infinite dimensional case, the cone of the dual space of a unital operator system may not admits an order unit; for example, the cone of the dual space of the $C^*$-algebras of all convergent sequences does not have an order unit (see Example \ref{eg:SMOS}(c)). 
This forces us to consider \emph{(not necessarily unital) operator system}, i.e. self-adjoint subspaces of $\CL(\KH)$ that may not contain the identity element. 
In this case, we need to take into account both the induced matrix cones and the induced matrix norms on those spaces. 
Note that in the case of a unital operator system, the induced matrix cone and the induced matrix norm determine each other, via the fixed order unit, but it is not the case in the non-unital case (see e.g., Example \ref{eg:SMOS}(b)). 

\medskip 

Observe also that the dual space $S^*$ of a unital operator system $S$ is, in general, not  an operator system in the strict sense (i.e., under the dual matrix cone and the dual matrix norm, $S^*$ may not be a subspace of some $\CL(\KH)$); for example, the dual space of $M_2(\BC)$ is not an operator system under the dual operator space structure (see e.g. \cite[Corollary 2.8]{Ng-MOS}). 
Therefore, our question is whether $S$ is ``dualizable'', in the sense that $S^*$ admits a suitable dual matrix norm equivalent  to the original one, so that $S^*$ becomes an operator system under the canonical dual matrix cone (in the finite dimensional case, \cite[Corollary 4.5]{CE} tells us that $S^*$ always admits a matrix order unit turning into a unital operator system). 
Moreover, if one wants to study dualities for operator system tensor products, one actually needs a canonical way to turn the dual space of an operator system into an operator system.

\medskip

In this article, we will show that if $S$ is a unital \textcolor{magenta}{complete} operator system, then $S$ is dualizable (see Theorem \ref{thm:dual-oper-sys}(a)). 
More precisely, using the weak-$^*$-version of a construction employed in \cite{Ng-MOS} for the study of unitization of quasi-operator systems, we found that  (see Theorem \ref{thm:quasi-oper-sys}(b)) 
$$\|[g_{i,j}]\|^\rd:= \sup \big\{\big\|[g_{i,j}(y_{k,l})]\big\|: [y_{k,l}]\in M_n(S)^+; \|[y_{k,l}]\|\leq 1; n\in \BN\big\} \qquad (m\in \BN; [g_{i,j}]\in M_m(S^*))$$
is an equivalent dual matrix norm on $S^*$ that turns $S^*$ into an operator system. 
In fact, it is the largest equivalent dual matrix norm dominated by the original dual matrix norm on $S^*$ that does the job (see Theorem \ref{thm:quasi-oper-sys}(b) and Theorem \ref{thm:dual-oper-sys}(a)).
In this case, the resulting (not necessarily unital) operator system, denoted by $S^\rd$, can be regarded as ``the operator system dual'' of the unital operator system $S$.

\medskip

After achieving this, a natural question is whether the dual space of $S^\rd$ can again be turned into an operator system. 
If it is the case, one also wants to know whether $(S^\rd)^\rd$ coincides $S^{**}$ as operator systems.

\medskip

Unfortunately it may not be possible to turn the dual space of a non-unital operator system into an operator system in general (see Example \ref{eg:SMOS}(a)).  
This draws us to the study of ``dualizability'' of non-unital operator systems. 
We eventually find that \textcolor{magenta}{a complete} operator system $T$ is ``dualizable'' if and only if one can find $r>0$ such that every self-adjoint element in $\bigcup_{n\in \BN} M_n(T)$ with norm one is a difference of two elements in the matrix cone with their norms dominated by $r$ (Theorem \ref{thm:quasi-oper-sys}(a)). 
Furthermore, if $T$ is dualizable, then so is $T^\rd$ (Proposition \ref{prop:predual}(b)). 
In the case of a unital \textcolor{magenta}{complete} operator system $S$, we also show that the unital operator system $S^{**}$ is the same as the operator system $(S^\rd)^\rd$, in the sense that the canonical complete order isomorphism $\tau_S:S^{**}\to (S^\rd)^\rd$ is completely isometric (Theorem \ref{thm:dual-oper-sys}(b)).  

\medskip 

If $S,T$ are two dualizable \textcolor{magenta}{complete} operator systems and $\phi:S\to T$ is a completely positive complete bounded map, then we can find a unique weak-$^*$-continuous completely positive complete bounded map $\phi^\rd:T^\rd\to S^\rd$ compatible with the dual map $\phi^*$. 
Through this, we obtain a full and faithful contravariant functor from the category of dualizable operator systems to the category of dualizable dual operator systems. 
Moreover, the subcategory consisting of dualizable operator systems $S$ with $\tau_S$ being completely isometric can be identified, via this contravariant functor, with a full subcategory of the category of dualizable dual operator systems (see Theorem \ref{thm:dualization-functor}). 

\medskip

On the other hand, we show that all (not necessarily unital) $C^*$-algebras are dualizable (see Theorem \ref{thm:dual-oper-sys}(a)).
Furthermore, for a $C^*$-algebra $A$, the complete order isomorphism $\tau_A$ is also completely isometric (see Theorem \ref{thm:dual-oper-sys}(b)).

\medskip

This sets up a framework for duality of operator systems that includes both $C^*$-algebras and unital \textcolor{magenta}{complete} operator systems. 

\medskip

In the appendix, we will present two remarks. 
The first one is that, if one insists to have a unital operator system as the ``dual object'' of a unital operator system $S$, then one may consider the ``weak-$^*$-unitization'', $S^\diamond$,  of $S^\rd$ (see Proposition \ref{prop:alt-def-dual-MOS} and the discussion following it), which can be regarded as a unital dual operator system equipped with a fixed character. 
This gives a nice ``duality functor''  as well (see Corollary \ref{cor:unital-dual}). 
Note that one can recover the dual space of $S$, up to equivalent dual matrix norms, from the kernel of the fixed character on $S^\diamond$ (see the discussion following Corollary \ref{cor:unital-dual}).

\medskip

The second remark concerns with analogues of results in the main context concerning the ``non-matrix case'' (i.e. concerning with ordered normed spaces and functions systems). 
Readers who are not familiar with operator spaces and operator systems may have a look at Subsection \ref{subsec:non-matrix} first, in order to gain some ideas about the results and definitions in the main context (which are basically matrix versions of the materials in Subsection \ref{subsec:non-matrix}). 

\medskip

\section{Prelimaries: MOS and Quasi-operator systems}

\medskip

In this section, we will give some notations and definitions. 
We will also present some elementary results and known facts. 

\medskip

For a complex vector space $X$ and a positive integer $n$, we identify $M_n(X)\subseteq M_{n+1}(X)$ by putting elements of $M_n(X)$ into the upper left corner of $M_{n+1}(X)$. 
Under this identification, we denote 
$$M_\infty(X) := {\bigcup}_{n\in \BN} M_n(X).$$ 
In this case, $M_\infty(X)$ is a bimodule over $M_\infty(\BC)$. 
If $Y$ is another vector space and $\varphi:X\to Y$ is a complex linear map, we define a map  $\varphi^{(\infty)}:M_\infty(X)\to M_\infty(Y)$ by 
$$\varphi^{(\infty)}\big([x_{i,j}]\big) := \big[\varphi(x_{i,j})\big] \qquad ([x_{i,j}]\in M_\infty(X)).$$
For $n\in \BN$, we set $\varphi^{(n)} := \varphi^{(\infty)}|_{M_n(X)}$. 

\medskip

Recall that a complex vector space $X$ is an \emph{operator space} if there is a norm $\|\cdot\|$ on $M_\infty(X)$ (called the \emph{matrix norm}) such that for every $n\in \BN$, $a_1,\dots,a_n, b_1, \dots, b_n\in M_\infty(\BC)$ and $x_1,\dots ,x_n\in M_\infty(X)$, 
$$\Big\|{\sum}_{k=1}^n  a_k^*x_kb_k\Big\| \leq \Big\|{\sum}_{k=1}^n  a_k^*a_k\Big\|^{1/2}\Big\|{\sum}_{k=1}^n  b_k^*b_k\Big\|^{1/2} \max_{k=1,\dots, n}\|x_n\|.$$
The above is a reformulation of the original definition in \cite{ER}, and can be found in, e.g., \cite[Theorem 2.1]{Ng-reg-mod} (note that although the completeness assumption is stated in \cite[Theorem 2.1]{Ng-reg-mod}, the statement itself is true without the completeness assumption with the same argument). 

\medskip

For complex normed spaces $X$ and $Y$, we denote by $\CL(X;Y)$ the set of bounded linear maps from $X$ to $Y$, and put $\CL(X):= \CL(X;X)$. 
Let us also recall that a linear map $T:X\to Y$ is \emph{bounded below} if 
$${\inf}_{\|x\|=1} \|Tx\| > 0.$$  

\medskip

For two operator spaces $X$ and $Y$, a complex linear map $\varphi:X \to Y$ is  
said to be \emph{completely bounded} 
if $\varphi^{(\infty)}\in \CL\big(M_\infty(X); M_\infty(Y)\big)$. 
Moreover, $\varphi$ is called a 
\begin{itemize}
	\item \emph{complete isometry} if $\varphi^{(\infty)}$ is isometric;
	
	\item \emph{complete contraction} if $\varphi^{(\infty)}$ is contractive;

	\item \emph{complete embedding} if $\varphi^{(\infty)}$ is bounded and bounded below.
\end{itemize}

\medskip

For our study of duality of operator system, the natural framework to work on, is the notion of matrix ordered operator spaces (MOS), as introduced by Werner in \cite{Wern1}, as well as its dual version (see the next section). 
However, since the dual space of an operator system may not be a MOS, we will go a bit more general to the setting of SMOS. 

\medskip

\begin{defn}\label{defn:SMOS}
(a) An operator space $X$ is called a \emph{semi-matrix ordered operator space} (\emph{SMOS}) if there exist an involution $^*: X\to X$ with the induced map $^*:M_\infty(X)\to M_\infty(X)$ (sending $[x_{i,j}]$ to $[x_{j,i}^*]$) being isometric, and a 
norm-closed  subset 
\begin{equation*}
M_\infty(X)^+\subseteq M_\infty(X)^\sa := \{x\in M_\infty(X): x^* = x \}
\end{equation*}
(known as the \emph{matrix cone}) satisfying 
\begin{equation}\label{eqt:mat-cone}
{\sum}_{k=1}^n  a_k^*x_ka_k\in M_\infty(X)^+ \qquad (n\in \BN; a_1,...,a_n\in M_\infty; x_1,...,x_n\in M_\infty(X)^+).
\end{equation}
In this case, we denote $M_n(X)^\sa := M_n(X)\cap M_\infty(X)^\sa$ and $M_n(X)^+ := M_n(X)\cap M_\infty(X)^+$. 

\smnoind
(b) (\cite{Wern1})  A SMOS $X$ is called a \emph{matrix ordered operator space} (\emph{MOS}) if the matrix cone is \emph{proper}; i.e., 
$M_\infty(X)^+ \cap - M_\infty(X)^+ = \{0\}$.
\end{defn}

\medskip

Observe that a subset $M_\infty(X)^+$ satisfying Relation \eqref{eqt:mat-cone} is norm-closed in $M_\infty(X)^\sa$ if and only if $M_n(X)^+$ is norm-closed in $M_n(X)^\sa$ for all $n\in \BN$.



\medskip

Consider two SMOS $X$ and $Y$. 
If $\varphi:X\to Y$ is a complex linear map satisfying $\varphi(x^*) = \varphi(x)^*$ ($x\in X$), then  $\varphi$ is said to be 
\begin{itemize}
	\item \emph{completely positive} if $\varphi^{(\infty)}(M_\infty(X)^+)\subseteq M_\infty(Y)^+$;
	
	\item \emph{completely order monomorphic} if $\varphi$ is injective and for every $x\in M_\infty(X)^\sa$, the condition $x\in M_\infty(X)^+$ is equivalent to  $\varphi^{(\infty)}(x)\in M_\infty(Y)^+$; 

	\item \emph{a MOS isomorphism} if $\varphi$ is a bijective completely order monomorphic complete embedding. 
\end{itemize}
We denote by $\CPB(X,Y)$ the set of all completely bounded completely positive maps from $X$ to $Y$. 
Moreover, the set of completely positive complete contractions from $X$ to $Y$ will be denoted by $\Morc(X,Y)$. 

\medskip

By an \emph{operator system}, we mean a self-adjoint subspace $S$ of some $\CL(\KH)$, equipped with the induced MOS structure. 
Note that in the case when $S$ is \emph{unital}; i.e,  it contains the identity of $\CL(\KH)$, one only needs to consider the induced matrix cone on $S$, since the induced matrix norm can be recovered from the identity and the matrix cone. 
However, in the non-unital case, one needs to consider both the matrix cone and the matrix norm. 

\medskip

Let us recall the following definition from \cite{Ng-MOS} (in particular, see \cite[Theorem 2.6]{Ng-MOS}). 
Notice that our definition is slightly different from the one in \cite{Wern1}, in the sense that ``operator systems'' in \cite{Wern1} are precisely ``quasi-operator systems'' here. 

\medskip

\begin{defn}
Let $X$ be a MOS. 
If there is a MOS isomorphism $\Psi$ from  $X$ onto an operator system, then $X$ is called a \emph{quasi-operator system}. 
\end{defn}

\medskip

By rescaling, we may always assume that the map $\Psi$ in the above is a complete contraction.

\medskip

Closely related to the notation of quasi-operator systems is the notion of partial unitizations (as originally introduced in \cite{Wern-subsp}).
The following is a reformulation of this notion, and is taken from Definition 3.2 and Theorem 3.3 of \cite{Ng-MOS}. 

\medskip

\begin{prop}\label{prop:unital}
	If $X$ is a MOS, then there is a unique pair $(X^1, \iota_X)$ with $X^1$ being a unital operator system and $\iota_X\in \Morc(X; X^1)$ being completely order monomorphic such that for each unital operator system $S$ and $\Phi\in \Morc(X;S)$, there is a unique unital completely positive map $\ti \Phi:X^1\to S$ satisfying $\Phi = \ti \Phi \circ \iota_X$. 
	
	In this case, $(X^1, \iota_X)$ (or simple $X^1$) is called a \emph{partial unitization} of $X$. 
	If it happens that $\iota_X$ is a complete embedding, then $X^1$ is called a \emph{unitization} of $X$.
\end{prop}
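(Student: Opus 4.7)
The plan is to establish uniqueness by a standard universal-property argument, then to construct $X^1$ explicitly on the vector space $X \oplus \BC$, equipping it with the matrix structure coming from a universal completely positive complete contraction out of $X$. Throughout, I write $\widetilde\varphi$ for a unital CP extension whose existence is being asserted.

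For uniqueness, suppose $(X^1, \iota_X)$ and $(Y, \iota'_X)$ both satisfy the stated property. Applying the universal property of $X^1$ to $\iota'_X \in \Morc(X; Y)$ yields a unital CP map $\widetilde{\iota'_X}: X^1 \to Y$ with $\widetilde{\iota'_X} \circ \iota_X = \iota'_X$, and symmetrically a unital CP map $\widetilde{\iota_X}: Y \to X^1$ with $\widetilde{\iota_X} \circ \iota'_X = \iota_X$. Their compositions are unital CP self-maps extending $\iota_X$ and $\iota'_X$ respectively, so by the uniqueness clause of the respective universal properties they must be the identity maps. Thus $\widetilde{\iota'_X}$ is a unital CP bijection with unital CP inverse, hence a complete order isomorphism; between unital operator systems such a map is automatically completely isometric via the standard characterization of the matrix norm in terms of the matrix cone.

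For existence, let $\pi := \bigoplus_\Phi \Phi: X \to \CL(\KK)$, where the direct sum runs over a set of representatives of all $\Phi \in \Morc(X; \CL(\KH))$ (for Hilbert spaces $\KH$ of sufficiently bounded cardinality). Set $X^1 := X \oplus \BC$ with involution $(x, \lambda)^* := (x^*, \bar\lambda)$, unit $e := (0,1)$, inclusion $\iota_X(x) := (x,0)$, and matrix norm and matrix cone pulled back from the concrete unital operator subsystem $\pi(X) + \BC \cdot I_\KK \subseteq \CL(\KK)$. Concretely,
$$\|(x, [\lambda_{i,j}])\|^1 = \sup\big\{\big\|\Phi^{(n)}(x) + [\lambda_{i,j}] \otimes I_\KH\big\|: \Phi\in \Morc(X;\CL(\KH))\big\},$$
and $M_n(X^1)^+$ consists of those $(x, [\lambda_{i,j}])$ for which each $\Phi^{(n)}(x) + [\lambda_{i,j}] \otimes I_\KH$ is positive. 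The supremum is finite because each $\Phi$ is a complete contraction, the MOS axioms for $X^1$ are inherited from $\CL(\KK)$, and by construction $X^1$ is a unital operator system. The universal property is then nearly automatic: for $\Phi \in \Morc(X; S)$ with $S$ unital, post-composing with a unital completely isometric complete order embedding $S \hookrightarrow \CL(\KH)$ places $\Phi$ among the maps appearing in the definitions of $\|\cdot\|^1$ and $M_n(X^1)^+$, so that the linear extension $\ti\Phi(x, \lambda) := \Phi(x) + \lambda \cdot 1_S$ is unital CP and a complete contraction, uniquely determined by $\ti\Phi \circ \iota_X = \Phi$ together with unitality.

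The hard part will be verifying that $\iota_X$ is \emph{completely order monomorphic}, not merely CP: whenever $\pi^{(n)}([x_{i,j}]) \geq 0$ in $M_n(\CL(\KK))$ we must conclude $[x_{i,j}] \in M_n(X)^+$. This is the real content of the MOS hypothesis and should be established by a Hahn--Banach-type separation argument: if $[x_{i,j}] \in M_n(X)^\sa \setminus M_n(X)^+$, the norm-closed proper cone $M_n(X)^+$ can be strictly separated from $[x_{i,j}]$ by a continuous self-adjoint $\BR$-linear functional on $M_n(X)^\sa$, and an Arveson/Stinespring-type matricial refinement upgrades this functional into a CCP map $\Phi: X \to M_k(\BC)$ with $\Phi^{(n)}([x_{i,j}]) \not\geq 0$, contradicting that $\Phi$ is one of the summands of $\pi$.
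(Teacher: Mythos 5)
Your construction is essentially the one the paper relies on: note that the paper does not prove this proposition itself but quotes it from \cite[Definition 3.2 and Theorem 3.3]{Ng-MOS} (the notion going back to Werner \cite{Wern-subsp}), and there, exactly as in your proposal, the partial unitization is $X\oplus\BC$ with matrix norm and matrix cones determined by the family of completely positive complete contractions from $X$ into matrix algebras; your uniqueness argument and the verification of the universal property are correct. The one substantive step you leave as a sketch --- that every $x\in M_n(X)^\sa\setminus M_n(X)^+$ is detected by some $\Phi\in \Morc(X;M_k(\BC))$ --- is precisely the Hahn--Banach separation plus GNS-type refinement recorded in this paper as Lemma \ref{lem-BM1.1} (equivalently \cite[Lemma 2.4(a),(c)]{Ng-MOS}, used the same way in the proof of Proposition \ref{prop:reg}(a)), so you can close that gap by invoking it rather than reproving it; observe that the same lemma is also what makes your supremum a genuine norm rather than a seminorm, i.e.\ it gives injectivity of $(x,\lambda)\mapsto \pi(x)+\lambda I_\KK$, a point you should state explicitly. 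Two small repairs are advisable: take the defining family to be $\Morc(X;M_k(\BC))$, $k\in\BN$, which removes the set-theoretic caveat, and then, for the universal property with a target $S\subseteq \CL(\KH)$ on an arbitrarily large Hilbert space, verify positivity (and the norm estimate) of $\Phi^{(n)}(x)+\lambda\otimes I_\KH$ through finite-rank compressions as in \eqref{eqt:norm-L(H)}, since the compressed maps do belong to your defining family; with these adjustments your argument is complete and agrees with the cited construction.
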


\medskip

It is easily seen from the definition above that $X^1 = \iota_X(X) + \BC 1$. 
Note, however, that even if $\Phi:X\to S$ is a completely order monomorphic complete isometry, there is no guarantee that $\ti \Phi:X^1\to S$ is a complete isometry (see \cite[Proposition 2.13]{Wern1}). 

\medskip

The following result is a combination of Theorem 2.6 and Theorem 3.3(a) of \cite{Ng-MOS}.

\medskip

\begin{prop}\label{prop:unit-quasi-os}
Let $X$ be a MOS. 
Then $X$ is a quasi-operator system (respectively, an operator system) if and only if $\iota_X$ is a complete embedding (respectively, a complete isometry). 
\end{prop}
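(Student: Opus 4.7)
The plan is to deduce both parts of the proposition from the universal property of the partial unitization given in Proposition \ref{prop:unital}, together with the standard fact that every unital completely positive map from a unital operator system into some $\CL(\KH)$ is automatically completely contractive.

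For the ``if'' direction, suppose $\iota_X$ is a complete embedding (respectively, a complete isometry). Since $X^1$ is a unital operator system, it is a unital self-adjoint subspace of some $\CL(\KH)$, and hence $\iota_X(X)\subseteq \CL(\KH)$ is itself a self-adjoint subspace, i.e., an operator system. Because $\iota_X$ is always completely order monomorphic and, by hypothesis, a complete embedding, its corestriction $\iota_X:X\to \iota_X(X)$ is a MOS isomorphism, so $X$ is a quasi-operator system. If moreover $\iota_X$ is a complete isometry, then the given MOS structure on $X$ coincides with the one pulled back from the operator system $\iota_X(X)\subseteq \CL(\KH)$, so $X$ itself is an operator system.

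For the ``only if'' direction, suppose $X$ is a quasi-operator system, so there is a MOS isomorphism $\Psi:X\to S$ onto an operator system $S\subseteq \CL(\KH)$; by the remark following the definition of quasi-operator system we may take $\Psi$ to be a complete contraction. The composition $j_S\circ \Psi$ of $\Psi$ with the inclusion $j_S:S\hookrightarrow \CL(\KH)$ then lies in $\Morc(X;\CL(\KH))$, so by the universal property of $(X^1,\iota_X)$ there exists a unique unital completely positive map $\widetilde{\Psi}:X^1\to \CL(\KH)$ satisfying $j_S\circ \Psi = \widetilde{\Psi}\circ \iota_X$. Being unital and completely positive on a unital operator system, $\widetilde{\Psi}$ is automatically completely contractive, whence
$$\|\Psi^{(\infty)}(x)\| \;=\; \|\widetilde{\Psi}^{(\infty)}(\iota_X^{(\infty)}(x))\| \;\leq\; \|\iota_X^{(\infty)}(x)\| \qquad (x\in M_\infty(X)).$$
Since $\Psi$ is a complete embedding, $\Psi^{(\infty)}$ is bounded below, and the inequality forces $\iota_X^{(\infty)}$ to be bounded below as well; combined with $\iota_X\in \Morc(X;X^1)$ being a complete contraction, this shows $\iota_X$ is a complete embedding. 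If in addition $\Psi$ is a complete isometry, the same inequality gives $\|x\|=\|\Psi^{(\infty)}(x)\|\leq \|\iota_X^{(\infty)}(x)\|\leq \|x\|$, so $\iota_X$ is a complete isometry.

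The only non-routine ingredient is the automatic complete contractivity of UCP maps on unital operator systems (a classical result that must be cited at the relevant step); everything else is bookkeeping with the defining universal property of $X^1$.
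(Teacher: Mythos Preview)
Your argument is correct and is essentially what the paper's citation to \cite{Ng-MOS} unpacks: the paper does not give an in-text proof but simply combines Theorem~2.6 and Theorem~3.3(a) of \cite{Ng-MOS}, the latter being precisely the universal property recorded here as Proposition~\ref{prop:unital}. Your self-contained derivation via that universal property together with the automatic complete contractivity of unital completely positive maps is exactly the intended route.
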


\medskip

\begin{lem}\label{lem:image-weak-st-cont-bdd-below}
	Let $E$ and $F$ be Banach spaces.
	Let $\Phi$ be a bounded and bounded below weak-$^*$-continuous linear map from the dual space, $E^*$, of $E$ to the dual space of $F$. 
	Then $\Phi(E^*)$ is a weak-$^*$-closed subspace of $F^*$, and $\Phi$ is a weak-$^*$-homeomorphism from $E^*$ onto $\Phi(E^*)$. 
\end{lem}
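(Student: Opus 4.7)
The plan is to represent $\Phi$ as the Banach adjoint of a map from $F$ to $E$ and then to deduce everything from the closed range theorem together with surjectivity of that predual map.

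First I would recall the standard characterization of weak-$^*$-continuous bounded linear maps between dual spaces: since $\Phi \in \CL(E^*; F^*)$ is weak-$^*$-continuous, there is a bounded linear map $\Psi: F \to E$ with $\Phi = \Psi^*$, i.e.\ $\Phi(f)(y) = f(\Psi(y))$ for all $f\in E^*$ and $y\in F$. The hypothesis that $\Phi$ is bounded below forces two things at once: $\Phi$ is injective (so $\ker \Psi^* = \{0\}$), and the range $\Phi(E^*) = \Psi^*(E^*)$ is norm-closed in $F^*$.

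Next I would invoke Banach's closed range theorem applied to $\Psi: F \to E$. Since $\Psi^*(E^*)$ is norm-closed, the theorem guarantees that $\Psi(F)$ is norm-closed in $E$, and moreover that $\Psi^*(E^*)$ is in fact weak-$^*$-closed in $F^*$; this already gives the first assertion. The same theorem also yields $\ker \Psi^* = \Psi(F)^\perp$, and combining $\ker \Psi^* = \{0\}$ with the closedness of $\Psi(F)$ (via Hahn--Banach) shows that $\Psi: F \to E$ is surjective.

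Finally I would use this surjectivity to handle the weak-$^*$-homeomorphism claim. Weak-$^*$-continuity of $\Phi$ is already assumed, so the only remaining point is that the algebraic inverse $\Phi^{-1}: \Phi(E^*) \to E^*$ (well-defined by injectivity) is weak-$^*$-continuous. For this, given a net $(f_\alpha)$ in $E^*$ with $\Phi(f_\alpha) \to \Phi(f)$ in the weak-$^*$-topology, we have $f_\alpha(\Psi(y)) \to f(\Psi(y))$ for every $y \in F$; since $\Psi(F) = E$, this says $f_\alpha(e) \to f(e)$ for every $e \in E$, i.e.\ $f_\alpha \to f$ weak-$^*$ in $E^*$. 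Combined with the previous paragraph, this finishes the proof.

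There is no serious obstacle here: the argument is essentially a textbook application of the closed range theorem. The only point to be careful about is bookkeeping, namely simultaneously extracting norm-closedness of $\Psi(F)$ (for weak-$^*$-closedness of the image) and density of $\Psi(F)$ (for surjectivity of $\Psi$, which drives the continuity of $\Phi^{-1}$) from the two hypotheses ``bounded below'' and ``weak-$^*$-continuous''.
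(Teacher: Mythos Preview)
Your argument is correct and follows essentially the same route as the paper: write $\Phi=\Psi^*$ for $\Psi:F\to E$, and use closed-range/duality reasoning to conclude that $\Psi$ is onto and that $\Phi(E^*)$ is weak-$^*$-closed. The only cosmetic difference is that the paper factors through the quotient $F_\Psi:=F/\ker\Psi$ and shows the induced injective map $\Psi_0:F_\Psi\to E$ is a Banach space isomorphism (so $\Psi_0^*=\Phi:E^*\to F_\Psi^*=(\ker\Psi)^\perp$ is a weak-$^*$-homeomorphism onto the annihilator), whereas you invoke the closed range theorem by name; the content is the same.
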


\medskip

In fact, let $\Psi:F\to E$ be the bounded linear map with $\Phi = \Psi^*$.
Denote $F_\Psi:= F/\ker \Psi$ and define $\Psi_0: F_\Psi \to E$ to be the bounded linear map induced by $\Psi$. 
Then  $F_\Psi^*$ can be identified, as Banach spaces,  with the weak-$^*$-closure of $\Phi(E^*)$ in $F^*$, in a canonical way. 
Under this identification, one has $\Psi_0^* = \Phi$. 
By its definition, $\Psi_0$ is injective. 
Moreover, as $\Phi$ is bounded below, so is $\Psi_0^*$.
This implies that $\Psi_0$ is a Banach space isomorphism.

\medskip

For a (real or complex) normed space $E$,  we  put 
$$B_E:=\{x\in E:\|x\|\leq 1 \}.$$ 
If $f = [f_{i,j}]\in M_\infty(E^*)$, then we use $\theta_f$ to denote the linear map from $E$ to $M_\infty(\BC)$ given by 
$$\theta_f(x) := [f_{i,j}(x)]\qquad (x\in E).$$  
It is well-known that for each $n\in \BN$, the assignment $\theta: f \mapsto \theta_f$ induces a linear bijection from $M_n(E^*)$ onto $\CL(E;M_n(\BC))$. 
We may sometimes ignore this identification and regard
$$\CL(E;M_n(\BC)) = M_n(E^*).$$
In the same way, one obtains a (not necessarily surjective) map $\theta: M_n(E) 
\to \CL(E^*;M_n(\BC))$ given by 
$$\theta_x(f) := [f(x_{i,j})]\qquad (f\in E^*),$$ 
when $x=[x_{i,j}]\in M_n(E)$. 

\medskip

\begin{lem}\label{lem:weak-st-cont}
$\varphi\in \CL(E^*;M_n(\BC))$ is weak-$^*$-continuous if and only if there exists $x\in M_n(E)$ such that $\varphi = \theta_x$. 
\end{lem}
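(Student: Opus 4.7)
The plan is to reduce the matrix-valued statement to the scalar case, for which weak-$^*$-continuous linear functionals on a dual space are classically known to be exactly the evaluations at points of the predual.

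First, for the ``if'' direction, suppose $\varphi = \theta_x$ for some $x = [x_{i,j}] \in M_n(E)$. Then the $(i,j)$-entry of $\varphi(f)$ is $f(x_{i,j})$, which is weak-$^*$-continuous in $f$ by the very definition of the weak-$^*$-topology on $E^*$. Since $M_n(\BC)$ is finite-dimensional and carries its unique Hausdorff linear topology, a linear map into $M_n(\BC)$ is weak-$^*$-continuous if and only if each of its scalar entries is weak-$^*$-continuous. Hence $\theta_x$ is weak-$^*$-continuous.

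Conversely, suppose $\varphi \in \CL(E^*; M_n(\BC))$ is weak-$^*$-continuous. Writing $\varphi(f) = [\varphi_{i,j}(f)]$, each $\varphi_{i,j} : E^* \to \BC$ is a composition of $\varphi$ with the projection onto the $(i,j)$-entry, hence is a weak-$^*$-continuous bounded linear functional on $E^*$. The standard fact that the weak-$^*$-continuous functionals on $E^*$ are precisely the functionals of the form $f \mapsto f(x)$ for some $x \in E$ (see any standard functional analysis text) then yields elements $x_{i,j} \in E$ with $\varphi_{i,j}(f) = f(x_{i,j})$ for all $f \in E^*$. Setting $x := [x_{i,j}] \in M_n(E)$ gives $\varphi(f) = [f(x_{i,j})] = \theta_x(f)$.

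There is essentially no obstacle here: the whole content lies in the observation that $M_n(\BC)$ is finite-dimensional, which allows the componentwise reduction, together with the classical description of the predual of the weak-$^*$-topology. The result should be stated as a one-paragraph proof.
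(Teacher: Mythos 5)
Your argument is correct: the entrywise reduction using the finite-dimensionality of $M_n(\BC)$ plus the classical identification of weak-$^*$-continuous functionals on $E^*$ with evaluations at elements of $E$ is exactly the standard reasoning behind this lemma, which the paper itself states without proof as a well-known fact. Nothing is missing.
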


\medskip

One may set up a duality between $M_\infty(E)$ and $M_\infty(E^*)$ by 
\begin{equation}\label{eqt:def-Mn-dual}
f(x) := {\sum}_{k,l=1}^\infty f_{k,l}(x_{k,l}) \qquad \big(f=[f_{k,l}]\in M_\infty(E^*), x=[x_{k,l}]\in M_\infty(E) \big).
\end{equation}
The weakest vector topology on $M_\infty(E^*)$ under which $f\mapsto f(x)$ is continuous  for every $x\in M_\infty(E)$ will be denoted by $\sigma\big(M_\infty(E^*),M_\infty(E)\big)$, 
and will be called the \emph{weak-$^*$-topology} on $M_\infty(E^*)$. 

\medskip
 
It is easy to see that 
a net $(f^{(\alpha)})_{\alpha\in \Lambda}$ in $M_\infty(E^*)$ converge to $f$ under the weak-$^*$-topology if and only if the net $(f^{(\alpha)}_{k,l})_{\alpha\in \Lambda}$  in $E^*$ weak-$^*$-converges to $f_{k,l}$, for every $k,l\in \BN$ (recall that there is only a finite number of non-zero terms in a matrix $[x_{k,l}]\in M_\infty(E)$). 

\medskip

When $n\in \BN$, the topology on on $M_n(E^*)$ induced by $\sigma\big(M_\infty(E^*),M_\infty(E)\big)$ will be denoted by $\sigma\big(M_n(E^*),M_n(E)\big)$ (according to Relation \eqref{eqt:def-Mn-dual}, the values of $x_{k,l}$ for $k>n$ or $l>n$ play no roles in this induced topology on $M_n(E^*)$). 

\medskip

For a SMOS $X$, the dual space $X^*$ is a SMOS under the involution given by $[f_{i,j}]^* := [g_{i,j}]$, where $g_{i,j} = (f_{j,i})^*$ ($i,j\in \BN$) for $[f_{i,j}]\in M_\infty(X^*)$, the matrix norm given by  
$$\|f\|_{M_\infty(X^*)} = \sup \big\{ \big\|\theta_f^{(\infty)}(x)\big\|: x\in B_{M_\infty(X)}\big\} \qquad (f\in M_\infty(X^*))$$
(see, e.g., \cite[p.41]{ER}), as well as the matrix cone given by: 
$$M_\infty(X^*)^+\ :=\ \big\{f\in M_\infty(X^*)^\sa: \theta_f\in \CPB(X;M_\infty(\BC))\big\}. $$ 
Observe that as $\theta_f$ is automatically completely bounded when $f\in M_n(X^*)$ (because $\theta_f$ is a bounded linear map from $X$ to $M_n(\BC)$), the condition $\theta_f\in \CPB(X;M_\infty)$ means that $\theta_f$ is completely positive. 

\medskip

\emph{Whenever we use the notation $X^*$, we means the dual space $X^*$ is equipped with the above SMOS structure. }



\medskip

In general, $X^*$ may not be a MOS even when $X$ is an operator system (see Example \ref{eg:SMOS}(a)). 
It is also possible that $X^*$ is a MOS, but is not a quasi-operator system  (compare  Proposition \ref{prop:dual-sp-MOS} and Theorem \ref{thm:quasi-oper-sys}(a)). 

\medskip 

Let us end this section with some well-known facts. 
The first one can be obtained by a combination of arguments of Proposition 5.4.1 and Lemma 5.4.3 of \cite{ER} (see also \cite{Paul}). 
Observe that weak-$^*$-continuous complete contractions from $A^{**}$ to $M_n(\BC)$ are precisely complete contractions from $A$ to $M_n(\BC)$. 

\medskip

\begin{lem}\label{lem:decomp}
	Let $A$ be a $C^*$-algebra and $n\in \BN$. 
	Suppose that $\phi:A\to M_n(\BC)$ is a complete contraction (respectively, $\phi:A^{**}\to M_n(\BC)$ is a weak-$^*$-continuous complete contraction). 
	There exist $\phi_1, \phi_2, \phi_3, \phi_4\in \Morc(A;M_n(\BC))$
	(respectively, weak-$^*$-continuous maps $\phi_1, \phi_2, \phi_3, \phi_4\in \Morc(A^{**};M_n(\BC))$) such that $\phi = \phi_1 - \phi_2 + \mathrm{i} \phi_3 - \mathrm{i}\phi_4$. 
\end{lem}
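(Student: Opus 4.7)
The plan is to follow the standard Wittstock--Paulsen decomposition of a complete contraction into completely positive pieces, combined with a real/imaginary split. First, set $\phi^\sharp(a):=\phi(a^*)^*$ ($a\in A$). Since the involutions on $M_\infty(A)$ and on $M_\infty(M_n(\BC))$ are isometric (Definition \ref{defn:SMOS}(a)), the map $\phi^\sharp$ has the same completely bounded norm as $\phi$, hence is also a complete contraction. Consequently the maps
$$\psi_1:=\tfrac{1}{2}(\phi+\phi^\sharp),\qquad \psi_2:=\tfrac{1}{2\mathrm{i}}(\phi-\phi^\sharp)$$
are self-adjoint (in the sense $\psi_j(a^*)=\psi_j(a)^*$) complete contractions with $\phi=\psi_1+\mathrm{i}\psi_2$, so it suffices to decompose each self-adjoint complete contraction $\psi:A\to M_n(\BC)$ as $\psi=\chi_+-\chi_-$ with $\chi_\pm\in\Morc(A;M_n(\BC))$.

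For this I would invoke Paulsen's off-diagonal dilation (this is the content of \cite[Proposition 5.4.1]{ER}; compare also \cite{Paul}): there exist $\alpha,\beta\in\Morc(A;M_n(\BC))$ such that the map
$$\widetilde{\psi}:A\to M_{2n}(\BC),\qquad a\mapsto \begin{pmatrix}\alpha(a)&\psi(a)\\ \psi(a^*)^*&\beta(a)\end{pmatrix}$$
is completely positive. Since $\psi^\sharp=\psi$ the two off-diagonal blocks coincide, both equal to $\psi(a)$. Setting $V:=\tfrac{1}{\sqrt{2}}(I_n,I_n)$ and $V_-:=\tfrac{1}{\sqrt{2}}(I_n,-I_n)$, viewed as $n\times 2n$ matrices, the conjugations $X\mapsto VXV^*$ and $X\mapsto V_-XV_-^*$ are completely positive maps $M_{2n}(\BC)\to M_n(\BC)$. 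Composing with $\widetilde{\psi}$ shows that $\gamma_\pm:=(\alpha+\beta)/2\pm\psi$ are completely positive, and a triangle-inequality estimate using $\|\alpha\|_\cb,\|\beta\|_\cb\le 1$ and $\|\psi\|_\cb\le 1$ (evaluated on an approximate unit if $A$ is non-unital) yields $\|\gamma_\pm\|_\cb\le 2$. Thus $\chi_\pm:=\gamma_\pm/2$ lie in $\Morc(A;M_n(\BC))$ and satisfy $\chi_+-\chi_-=\psi$; applying this to both $\psi_1$ and $\psi_2$ produces the four summands $\phi_1,\ldots,\phi_4$.

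For the weak-$^*$-continuous version I would pass through the bijective correspondence between complete contractions $\phi_0:A\to M_n(\BC)$ and weak-$^*$-continuous complete contractions $\phi:A^{**}\to M_n(\BC)$ given by restriction and by the second adjoint $\phi_0\mapsto\phi_0^{**}$: the target $M_n(\BC)$ being reflexive, $\phi_0^{**}$ lands in $M_n(\BC)$, is weak-$^*$-continuous, and preserves $\Morc$ by weak-$^*$-density of $A$ in $A^{**}$ together with norm-closedness of the matrix cones. Applying the first part to $\phi|_A$ and taking the second adjoints of the four CP-contractive pieces produces four weak-$^*$-continuous elements of $\Morc(A^{**};M_n(\BC))$ whose combination agrees with $\phi$ on the weak-$^*$-dense subspace $A$, and hence everywhere. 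The main obstacle is just invoking the Paulsen dilation in the precise form with $\alpha$ and $\beta$ themselves complete contractions, and then keeping the factor of $1/2$ straight so that the final pieces $\chi_\pm$ land in $\Morc$ rather than merely in $\CPB$; everything else, including the passage to $A^{**}$, is routine.
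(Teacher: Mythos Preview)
Your proposal is correct and follows essentially the same route the paper indicates: the paper does not supply a proof but refers to \cite[Proposition 5.4.1 and Lemma 5.4.3]{ER} and \cite{Paul}, which amount precisely to the Paulsen off-diagonal dilation you invoke, followed by the $2\times 2$ corner averaging to extract the two completely positive pieces from a self-adjoint complete contraction, together with the observation (stated just before the lemma in the paper) that weak-$^*$-continuous complete contractions $A^{**}\to M_n(\BC)$ are exactly complete contractions $A\to M_n(\BC)$. Your bookkeeping with the factor $1/2$ and the passage to $A^{**}$ via second adjoints is also in order.
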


\medskip

The second well-known fact is a weak-$^*$-continuous version of Arveson's extension theorem.  
Let us first recall that if $N$ is a closed subspace of an operator space $X$.
Then we can equip the quotient space $X/N$ with a canonical matrix norm as follows: 
$$\|y\|:= \inf \{\|x\|: x\in M_\infty(X); q^{(\infty)}(x) = y \} \qquad (y\in M_\infty(X/N)),$$
where $q:X\to X/N$ is the quotient map. 
In this case, $X/N$ is an operator space, and is called a \emph{quotient operator space} of $X$. 

\medskip

\begin{lem}\label{lem:ws-cont-ext}
Let $n\in \BN$, $X$ be an operator space and $Y$ be a quotient operator space of $X$ with $q:X\to Y$ being the 
quotient map. 
For any weak-$^*$-continuous completely bounded map $\varphi: Y^* \to M_n(\BC)$ and $\epsilon > 0$, there exists a weak-$^*$-continuous map $\bar \varphi_\epsilon: X^*\to M_n(\BC)$ satisfying $\|\bar \varphi_\epsilon\|_\cb \leq \|\varphi\|_\cb +\epsilon$ and $\varphi = \bar \varphi_\epsilon\circ q^*$.  
\end{lem}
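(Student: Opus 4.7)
The plan is to reduce the statement to a routine computation using operator space duality, exploiting the standard identification between weak-$^*$-continuous completely bounded maps into $M_n(\BC)$ and elements of $M_n$ of the predual.

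First, I would invoke the following sharpening of Lemma \ref{lem:weak-st-cont}: for any operator space $Z$ and any $n\in \BN$, the assignment $z\mapsto \theta_z$ is a completely isometric isomorphism from $M_n(Z)$ onto the space of weak-$^*$-continuous completely bounded maps from $Z^*$ to $M_n(\BC)$, and in particular $\|\theta_z\|_\cb = \|z\|_{M_n(Z)}$. This is a standard consequence of the definition of the dual matrix norm on $Z^*$ (cf.\ \cite[p.41]{ER}), and I would simply cite it.

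Applying this to $Z=Y$, I obtain a unique $y=[y_{i,j}]\in M_n(Y)$ with $\varphi = \theta_y$ and $\|y\|_{M_n(Y)} = \|\varphi\|_\cb$. By the definition of the quotient operator space matrix norm, there exists a lift $x=[x_{i,j}]\in M_n(X)$ satisfying $q^{(n)}(x) = y$ and
$$\|x\|_{M_n(X)} \ \leq\ \|y\|_{M_n(Y)} + \epsilon \ =\ \|\varphi\|_\cb + \epsilon.$$
I would then set $\bar\varphi_\epsilon := \theta_x : X^* \to M_n(\BC)$. Lemma \ref{lem:weak-st-cont} tells us that $\bar\varphi_\epsilon$ is weak-$^*$-continuous, while the identification above yields $\|\bar\varphi_\epsilon\|_\cb = \|x\|_{M_n(X)} \leq \|\varphi\|_\cb + \epsilon$.

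It remains to verify the compatibility $\varphi = \bar\varphi_\epsilon\circ q^*$: for every $f\in Y^*$,
$$\bar\varphi_\epsilon(q^*(f)) \ =\ [(q^*f)(x_{i,j})] \ =\ [f(q(x_{i,j}))] \ =\ [f(y_{i,j})] \ =\ \theta_y(f) \ =\ \varphi(f).$$
The only real content is thus the completely isometric identification of $M_n(Z)$ with the space of weak-$^*$-continuous completely bounded maps $Z^*\to M_n(\BC)$; everything else is definition-chasing. I do not foresee a serious obstacle, since the construction of $\bar\varphi_\epsilon$ is essentially forced by the duality and the quotient norm formula, and the extra $\epsilon$ is absorbed at the single step of lifting $y$ to $x$.
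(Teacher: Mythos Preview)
Your proposal is correct and follows essentially the same approach as the paper: the paper's proof also invokes the identification of $M_n(Z)$ with the space of weak-$^*$-continuous completely bounded maps $Z^*\to M_n(\BC)$, and uses that $q^{(n)}$ sends the open unit ball of $M_n(X)$ onto the open unit ball of $M_n(Y)$, which is precisely your lifting step. You have simply written out the details that the paper leaves implicit.
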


\medskip

In fact, it follows from the definition that $q^{(n)}$ will send the open unit ball of $M_n(X)$ onto the open unit ball of $M_n(Y)$. 
Thus, the conclusion follows from the fact that $M_n(Y)$ can be identified with the Banach space of weak-$^*$-continuous completely bounded maps from $Y^*$ to $M_n(\BC)$ and so is $M_n(X)$. 

\medskip

The following is another known fact that we needed. 
However, since we do not find the explicit statement in the literature, we give a proof here. 

\medskip

\begin{lem}\label{lem:norm-pos-cont}
	If $S$ is an operator system and $k\in \BN$, then for every $z\in M_k(S^{**})$, one has 
	$$\|z\|_{M_k(S^{**})} = \sup \big\{ \big\|\theta_{z}^{(n)}(\psi)\big\|: \psi\in M_n(S^*)^+\cap B_{M_n(S^*)}; n\in \BN \big\}.$$
\end{lem}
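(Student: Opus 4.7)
The plan is to identify $\|z\|_{M_k(S^{**})}$ with the completely bounded norm of $\theta_z:S^*\to M_k(\BC)$, and then reduce the matrix unit ball in the resulting supremum to its positive part via Paulsen's $2\times 2$ construction. Using operator space duality $M_k(S^{**})=\CB(S^*,M_k(\BC))$ (together with Lemma \ref{lem:weak-st-cont}), one has
$$\|z\|_{M_k(S^{**})}=\sup\{\|\theta_z^{(n)}(\psi)\|:\psi\in B_{M_n(S^*)},\,n\in\BN\},$$
from which the inequality $\geq$ in the statement is immediate. The work is in the reverse inequality: dominating $\|\theta_z^{(n)}(\psi)\|$ for arbitrary $\psi\in B_{M_n(S^*)}$ by $\|\theta_z^{(m)}(\Phi)\|$ for a suitable positive $\Phi\in M_m(S^*)^+\cap B_{M_m(S^*)}$.

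To construct such a $\Phi$, I would view $\psi\in B_{M_n(S^*)}$ as a complete contraction $\psi:S\to M_n(\BC)$, apply Wittstock's extension theorem inside $\CL(\KH)$ to obtain a complete contraction $\widetilde\psi:\CL(\KH)\to M_n(\BC)$, and then use Paulsen's lemma on the unital $C^*$-algebra $\CL(\KH)$ to produce completely positive complete contractions $\widetilde\psi_1,\widetilde\psi_2:\CL(\KH)\to M_n(\BC)$ for which the Paulsen map
$$\widetilde\Psi:\begin{pmatrix}a & b\\ c & d\end{pmatrix}\longmapsto \begin{pmatrix}\widetilde\psi_1(a) & \widetilde\psi(b)\\ \widetilde\psi^*(c) & \widetilde\psi_2(d)\end{pmatrix}$$
is a CP complete contraction from $M_2(\CL(\KH))$ to $M_{2n}(\BC)$. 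Restricting $\widetilde\Psi$ to $M_2(S)\subseteq M_2(\CL(\KH))$ then produces a CP complete contraction $\Psi:M_2(S)\to M_{2n}(\BC)$ with $\psi$ in its $(1,2)$-corner.

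Next I would pass to a map out of $S$ via the canonical complete isometry
$$\CB(M_2(S),M_{2n}(\BC))\cong M_{2n}\bigl((M_2(S))^*\bigr)=M_{2n}(M_2(S^*))\cong M_{4n}(S^*)$$
(combining operator space matrix duality with the matrix shuffle $M_m(M_k(X))\cong M_{mk}(X)$). Under this identification, $\Psi$ corresponds to the map $\Phi:S\to M_{4n}(\BC)=M_2(M_{2n}(\BC))$ given by $\Phi(x)=[\Psi(e_{p,q}\otimes x)]_{p,q=1,2}$; moreover, $\Psi$ being CP forces $\Phi$ to be CP, since $\Phi(x)=\Psi^{(2)}(E\otimes x)$ with $E=\sum_{p,q}e_{p,q}\otimes e_{p,q}\in M_2(M_2(\BC))$ a positive element. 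Hence $\Phi\in M_{4n}(S^*)^+\cap B_{M_{4n}(S^*)}$.

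Finally, compressing $\theta_z^{(4n)}(\Phi)\in M_k(M_{4n}(\BC))$ by the rectangular isometries $I_k\otimes[I_n\ 0]^*$ and $I_k\otimes[0\ I_n]^*$ corresponding to the first and last $n$ coordinates of the inner $M_{2n}$ isolates its off-diagonal $(1,2)$-block, which (up to the shuffle swapping the outer $k$ and the inner $2\times 2$ indices) equals $\theta_z^{(n)}(\psi)\in M_{nk}(\BC)$. Thus $\|\theta_z^{(n)}(\psi)\|\leq\|\theta_z^{(4n)}(\Phi)\|$, and taking the supremum over $\psi$ and $n$ delivers the non-trivial inequality. The main technical hurdle will be bookkeeping the various shuffle identifications carefully, in particular verifying that the shuffle really sends CP maps to CP maps by exploiting the positivity of the ``Choi element'' $E$.
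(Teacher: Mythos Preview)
Your argument is correct and follows a genuinely different route from the paper's. The paper proceeds by choosing a completely order monomorphic complete isometry $\Psi:S\to\CL(\KK)$, composing with a faithful normal representation $\CL(\KK)^{**}\to\CL(\KH)$, and then computing the norm of the image of $z$ in $M_k(\CL(\KH))$ via compressions by finite-rank projections; each such compression yields a weak-$^*$-continuous completely positive contraction $\CL(\KH)\to M_n(\BC)$, which restricts along $\Phi\circ\Psi^{**}$ to an element of $M_n(S^*)^+\cap B_{M_n(S^*)}$. In other words, the paper manufactures the positive $\psi$'s extrinsically from vector states on a concrete Hilbert space representation of $S^{**}$. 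Your approach instead stays at the level of maps: you dominate an arbitrary $\psi\in B_{M_n(S^*)}$ by a single positive $\Phi\in M_{4n}(S^*)^+\cap B_{M_{4n}(S^*)}$ built from the Paulsen $2\times2$ system, and then read off $\theta_z^{(n)}(\psi)$ as a corner of $\theta_z^{(4n)}(\Phi)$. The paper's argument is shorter and avoids the shuffle bookkeeping you anticipate; your argument has the virtue of being more self-contained within operator-space duality (no passage through a normal representation of $\CL(\KK)^{**}$), and it makes transparent exactly how an arbitrary contraction is controlled by a positive one of bounded amplification. Either way the Wittstock extension (or, equivalently, passing to the unitization $S^1$) is needed because $S$ is not assumed unital, so Paulsen's lemma cannot be applied to $S$ directly.
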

\begin{proof}
	It is clear that $\|z\|_{M_k(S^{**})}$ is larger than the quantity in the right hand side. 
	For the opposite inequality, we consider a completely order monomorphic complete isometry $\Psi: S \to \CL(\KK)$, and a normal faithful $^*$-representation $\Phi:\CL(\KK)^{**}\to \CL(\KH)$. 
	Clearly, $\Phi\circ \Psi^{**}:S^{**}\to \CL(\KH)$ is a weak-$^*$-continuous completely positive complete isometry. 
	For any $y\in M_k(\CL(\KH)) = \CL(\KH)\otimes M_k(\BC)$, we have 
	\begin{equation}\label{eqt:norm-L(H)}
		\|y\| = \sup \{\|(p\otimes I_k)y(p\otimes I_k)\|: p\in \CL(\KH) \text{ is a finite rank projection}\}.
	\end{equation}
	Thus, by identifying a $n$-dimensional subspace of $\KH$ with $\BC^{n}$ (for each $m\in \BN$), one has
	\begin{equation*}
		\|y\| = \sup\big\{ \|\phi^{(k)}(y)\|: \phi\in M_n(\CL(\KH)_*)^+\cap B_{M_n(\CL(\KH)_*)}; n\in \BN\big\}. 
	\end{equation*}
	Since the map from $\CL(\KH)_*$ to $S^*$ induced by the weak-$^*$-continuous map $\Phi\circ \Psi^{**}$ is a completely positive complete contraction, and $\Phi\circ \Psi^{**}$ is a complete isometry, the quantity in the right hand side of the displayed formula in the statement is larger than $\|z\|_{M_k(S^{**})}$. 
\end{proof}

\medskip

The last possible known fact that we need is the following. 

\medskip

\begin{lem}\label{lem:bidual}
	Let $X$ be a MOS. 
	The canonical complete isometry $\kappa_X:X\to X^{**}$ is completely order monomorphic. 
\end{lem}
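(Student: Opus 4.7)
The plan is to verify the two inclusions separately (note that $\kappa_X$ is already known to be a complete isometry, so only complete order monomorphy is in question). For the forward direction, suppose $x = [x_{i,j}] \in M_n(X)^+$ and fix $m \in \BN$ and $F = [F_{k,l}] \in M_m(X^*)^+$. By the definition of the dual matrix cone, $\theta_F : X \to M_m(\BC)$ is completely positive, so $\theta_F^{(n)}(x) = [F_{k,l}(x_{i,j})]_{(i,k),(j,l)} \in M_{nm}(\BC)^+$; conjugating by the unitary permutation swapping inner and outer indices identifies this with $\theta_{\kappa_X^{(n)}(x)}^{(m)}(F) = [F_{k,l}(x_{i,j})]_{(k,i),(l,j)}$, which therefore also lies in $M_{mn}(\BC)^+$. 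Since this holds for every $m$ and every $F \in M_m(X^*)^+$, the map $\theta_{\kappa_X^{(n)}(x)} : X^* \to M_n(\BC)$ is completely positive, which is exactly $\kappa_X^{(n)}(x) \in M_n(X^{**})^+$.

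For the reverse direction I would argue by contraposition: assuming $x \in M_n(X)^{\sa}\setminus M_n(X)^+$, I will produce some $m$ and some $F \in M_m(X^*)^+$ with $\theta_F^{(n)}(x) \notin M_{mn}(\BC)^+$, which by the same index-permutation identity contradicts $\kappa_X^{(n)}(x) \in M_n(X^{**})^+$. The source of the needed completely positive map is the partial unitization from Proposition \ref{prop:unital}: $\iota_X : X \to X^1$ is a completely positive complete contraction into a unital operator system and is completely order monomorphic, so $x \notin M_n(X)^+$ forces $\iota_X^{(n)}(x) \notin M_n(X^1)^+$. Fixing a realization of the unital operator system $X^1$ as a self-adjoint subspace of some $\CL(\KH)$ then yields $\iota_X^{(n)}(x) \notin M_n(\CL(\KH))^+$, so by the definition of operator positivity there exists a vector $\xi = (\xi_1, \dots, \xi_n) \in \KH^n$ with $\langle \iota_X^{(n)}(x)\xi, \xi \rangle < 0$.

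Now choose a finite-rank projection $p \in \CL(\KH)$ whose range contains $\xi_1, \dots, \xi_n$, set $m := \dim p\KH$, identify $p\CL(\KH)p$ with $M_m(\BC)$, and consider the completely positive compression $\phi : \CL(\KH) \to M_m(\BC)$, $T \mapsto pTp$. Since $(p \otimes I_n)\xi = \xi$, we have $\langle \phi^{(n)}(\iota_X^{(n)}(x))\xi, \xi \rangle = \langle \iota_X^{(n)}(x)\xi, \xi \rangle < 0$, and hence $\phi^{(n)}(\iota_X^{(n)}(x)) \notin M_{mn}(\BC)^+$. The composition $F := \phi \circ \iota_X : X \to M_m(\BC)$ is a product of completely positive maps, hence completely positive, and being valued in a finite-dimensional space it is automatically completely bounded; under the identification $\CL(X;M_m(\BC)) = M_m(X^*)$ this gives $F \in M_m(X^*)^+$ with $\theta_F^{(n)}(x) = \phi^{(n)}(\iota_X^{(n)}(x)) \notin M_{mn}(\BC)^+$, delivering the required contradiction. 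The main subtle point is really just the availability of the partial unitization $\iota_X$ with its completely order monomorphic property, which is precisely where the MOS hypothesis (via Proposition \ref{prop:unital}) enters the proof; once $\iota_X$ is in hand, everything reduces to a standard finite-rank compression argument in $\CL(\KH)$ together with the index-permutation bookkeeping from the forward direction.
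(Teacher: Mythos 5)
Your proof is correct, but in the nontrivial (reverse) direction it takes a different route from the paper. The paper disposes of that direction in one stroke by citing \cite[Lemma 2.4(c)]{Ng-MOS}: since every $\varphi\in\CB^+(X;M_m(\BC))$ is of the form $\theta_f$ with $f\in M_m(X^*)^+$, the hypothesis $\kappa_X^{(\infty)}(x)\in M_\infty(X^{**})^+$ gives $\varphi^{(\infty)}(x)\geq 0$ for all such $\varphi$, and the cited lemma says precisely that completely positive matrix-valued maps detect the matrix cone of a MOS. You instead manufacture the detecting map by hand: you pass through the partial unitization $\iota_X:X\to X^1$ of Proposition \ref{prop:unital}, use that $\iota_X$ is completely order monomorphic to transport $x\notin M_n(X)^+$ to a non-positive self-adjoint element of $M_n(\CL(\KH))$, pick a vector witnessing non-positivity, and compress by a finite-rank projection to obtain $F=\phi\circ\iota_X\in M_m(X^*)^+$ with $\theta_F^{(n)}(x)\not\geq 0$, contradicting (via the canonical shuffle) complete positivity of $\theta_{\kappa_X^{(n)}(x)}$. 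All the steps check out: the cone of $X^1$ is the one induced from $\CL(\KH)$, the compression is completely positive and character-of-involution preserving, and boundedness into $M_m(\BC)$ gives complete boundedness, so $F$ really lies in $M_m(X^*)^+$. What each approach buys: the paper's argument is shorter but treats the separation statement as a black box from \cite{Ng-MOS}, whereas yours is self-contained modulo Proposition \ref{prop:unital} (itself quoted from \cite{Ng-MOS}/Werner) and makes explicit both where the MOS hypothesis enters and how the separating functional-matrix is produced; the price is invoking the comparatively heavy unitization machinery for what the paper settles with a separation lemma.
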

\begin{proof}
	Obviously, $\kappa_X$ is completely positive. 
	Suppose that $x\in M_\infty(X)^\sa$ such that $\kappa_X^{(\infty)}(x)\in M_\infty(X^{**})^+$. 
	Then $\theta_{\kappa_X^{(\infty)}(x)}\in \CPB(X^*;M_\infty(\BC))$. 
	For every $m\in \BN$ and every $\varphi\in \Mor(X;M_m(\BC))$, there exists $f\in M_m(X^*)^+$ such that $\varphi = \theta_f$, and hence 
	$$\varphi^{(\infty)}(x) = \theta_x^{(m)}(f) = \theta_{\kappa_X^{(\infty)}(x)}^{(m)}(f)\geq 0.$$ 
	It now follows from \cite[Lemma 2.4(c)]{Ng-MOS} that $x\in M_\infty(X)^+$. 
\end{proof}

\medskip

\section{dual spaces of unital operator systems as dual operator systems}

\medskip

In the following, we present the main results of this article. 
As said in the above, the frameworks for our study are the notion of SMOS (as was recalled in Definition \ref{defn:SMOS}) and its dual version. 
Let us first introduce the latter in the following.

\medskip

\begin{defn}\label{defn:dual-quasi-op-sys}
(a) A SMOS (respectively, MOS) $D$ is called a \emph{dual SMOS} (respectively, \emph{dual MOS}) if 
\begin{itemize}
	\item there is a $^*$-invariant norm-closed subspace $D_\# \subseteq D^*$ (which will be called a \emph{predual of $D$}) with the canonical map $\upsilon_D: D \to (D_\#)^*$ being a surjective complete isometry;
	
	\item the matrix cone $M_\infty(D)^+$ is $\sigma\big(M_\infty(D),M_\infty(D_\#)\big)$-closed. 
\end{itemize}

\smnoind
(b) A dual MOS is called a \emph{dual quasi-operator system} (respectively, \emph{dual operator system}) if there is a weak-$^*$-homeomorphic (respectively, weak-$^*$-homeomorphic completely isometric) MOS isomorphism  $\Gamma$ from $D$ onto a weak-$^*$-closed subspace of $\CL(\KH)$, for a Hilbert space $\KH$. 
\end{defn}

\medskip

We will see in Proposition  \ref{prop:reg}(b) that it suffices to have the map $\Gamma$ in Definition \ref{defn:dual-quasi-op-sys}(b) being a weak-$^*$-continuous completely positive complete embedding (respectively, complete isometry). 
Moreover, by considering a faithful normal $^*$-representation of a von Neumann algebra, we may replace $\CL(\KH)$  in Definition \ref{defn:dual-quasi-op-sys}(b) by an arbitrary von Neumann algebra. 



\medskip

\begin{rem}\label{rem:dual-MOS}
(a) The assumption of $M_\infty(D)^+$ being $\sigma\big(M_\infty(D),M_\infty(D_\#)\big)$-closed is equivalent to $M_n(D)^+$ being $\sigma\big(M_n(D),M_n(D_\#)\big)$-closed for every $n\in \BN$. 

\smnoind
(b) Suppose that $D$ is a dual MOS with a predual $D_\#$. 
Since $D_\#$ is $^*$-invariant, the subspace $M_\infty(D_\#)$ of $M_\infty(D^*)$ is also $^*$-invariant. 
	Therefore, the involution $^*: M_\infty(D)\to M_\infty(D)$ is continuous under the weak-$^*$-topology $\sigma\big(M_\infty(D), M_\infty(D_\#)\big)$. 

\smnoind
(c) By \cite[Theorem 1.1]{BM}, every ``dual operator system'' in the sense of \cite{BM} is a dual operator system in our sense. 
Conversely, if $V$ is a dual operator system in our sense and it admits an Archimedean matrix order unit (in the sense of \cite{CE}), then it is a ``dual operator system'' in the sense of \cite{BM}. 

\smnoind
(d) As shown in  \cite[Proposition 2.3]{BM}, there could exist many different preduals of a ``dual operator system'' (even in the unital case). 
\end{rem}

\medskip

If $X$ is a \textcolor{magenta}{complete} SMOS, then $X^*$ is a dual SMOS. 
We will see in Corollary \ref{cor:reform-CP} below that every dual MOS is of the form $X^*$. 
However, even when $X$ is an operator system, $X^*$ may not be a MOS (see part (a) below). 
It is the reason why we need to consider dual SMOS, instead of dual MOS alone. 

\medskip

\begin{eg}\label{eg:SMOS}
(a) Let $X$ be the subspace of $M_2(\BC)$ consisting of matrices with their diagonal entries being zero.
Then $M_\infty(X)^+ = \{0\}$. 
Hence, $M_\infty(X^*)^+ = M_\infty(X^*)^\sa$, and $X^*$ is not a MOS. 
Therefore, there is no MOS isomorphism from $X^*$ to any operator system.  

\smnoind
(b) Let $Y$ be the subspace of the space $X$ in part (a), consisting of matrices whose two off-diagonal entries coincide. 
The canonical map from $\BC$ to $Y$ is an involution preserving surjective complete isometry. 
In other words, when we equip $\BC$ with the matrix cone $\{0\}$ without changing the operator space structure nor the involution, the resulting space is again a dual operator system.  

\smnoind
(c) Let $A$ be the unital $C^*$-algebra of all convergent sequences. 
We will see in Theorem \ref{thm:dual-oper-sys}(a) below that there exists a MOS isomorphism from $A^*$ to an operator system.
However, there is no MOS isomorphism from $A^*$ onto a \emph{unital} operator system. 
In fact, one may identify $A^*$ with $\ell_1(\BN\cup \{0\})$, by associating $(t_m)_{m\in \BN\cup \{0\}}\in \ell_1(\BN\cup \{0\})$ with the following functional on $A$:
$$(c_k)_{k\in \BN}\mapsto {\sum}_{k=1}^\infty t_k(c_k - {\lim}_n c_n)  + t_0{\lim}_n c_n.$$ 
Under this identification, one has $(A^*)^+ =\big\{(t_m)_{m\in \BN\cup \{0\}}\in \ell_1(\BN\cup \{0\})^+: t_0 \geq {\sum}_{n=1}^\infty t_n \big\}$. 
Fix an arbitrary element $(r_m)_{m\in \BN\cup \{0\}}\in (A^*)^+$.
We set $s_0 := r_0$ and $s_m:= r_m/m^2$.
Then $(s_m)_{m\in \BN\cup \{0\}}\in (A^*)^+$ but $(s_m)_{m\in \BN\cup \{0\}}\nleq \alpha \cdot (r_m)_{m\in \BN\cup \{0\}}$, for all $\alpha\in \RP$. 
This means that $(r_m)_{m\in \BN\cup \{0\}}$ cannot be an order unit for the ordered space $A^*$.
\end{eg}

\medskip

It is natural to ask when the dual space of a SMOS is a MOS (and hence is a dual MOS). 
Our first proposition gives an answer to this question. 
For this, we need the following lemma, which will also be needed in Proposition \ref{prop:reg} below.  
This lemma is basically the same as \cite[Lemma 2.4(a)]{Ng-MOS}, which follows more or less from the proof of \cite[Theorem 1.1]{BM}, together with the observation that any map $\varphi:X\to M_n$ satisfying Relation \eqref{rel-phi-T} is completely positive (one can verify this observation either through Relation \eqref{eqt:f-varphi-expand} below, or through the fact that any $n$-positive linear map from a MOS $X$ to $M_n(\BC)$ is completely positive). 

\medskip

\begin{lem}
	\label{lem-BM1.1}
	Let $X$ be a MOS. 
	Consider $n\in \mathbb{N}$ and $f\in (M_n(X)^*)^+$ with $\|f\| = 1$. 
	
	\smnoind
	(a) There exist $k\leq n$, a cyclic representation $(\mathbb{C}^{kn},\pi, \xi)$ of $M_n$ and $\varphi\in \Morc(X;M_k(\BC))$ satisfying 
	\begin{equation}\label{rel-phi-T}
	f(a^*vb)\ =\ \big\la \varphi^{(n)}(v)\pi(b)\xi, \pi(a)\xi\big\ra \qquad (v\in M_{n}(X); a,b\in M_{n}(\BC)). 
	\end{equation}
	
	\smnoind
	(b) If, in addition, $X$ is a dual MOS with a predual $X_\#$ and $f$ is $\sigma(M_n(X), M_n(X_\#))$-continuous, then the map $\varphi$ in part (a) is $\sigma(X, X_\#)$-continuous. 
\end{lem}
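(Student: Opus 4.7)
My strategy is to follow the proof approach of \cite[Theorem 1.1]{BM} and construct a Stinespring-type dilation for the positive functional $f$. A useful preliminary remark is that if $\varphi:X\to M_k(\BC)$ is merely a bounded self-adjoint linear map for which the data $(k,\pi,\xi,\varphi)$ satisfy \eqref{rel-phi-T}, then $\varphi$ is automatically completely positive. This is the content of the observation alluded to in the text following the statement: expanding \eqref{rel-phi-T} produces the identity \eqref{eqt:f-varphi-expand}, from which complete positivity of $\varphi$ follows, and alternatively one can invoke the fact that any $n$-positive map from a MOS to $M_n(\BC)$ is completely positive. Hence the task reduces to producing a Stinespring factorization of $f$ together with the cyclic $M_n$-representation.

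For the construction, encode $f$ as a linear map $\hat F:X\to M_n(\BC)$ by $\hat F(x):=[f(e_{j,i}\otimes x)]_{j,i=1}^{n}$. For $[x_{r,s}]\in M_m(X)^+$ and $(c_{r,j})\in \BC^{mn}$, the relevant matrix coefficient of $\hat F^{(m)}([x_{r,s}])$ equals $f(C [x_{r,s}] C^*)$ where $C:=[c_1|\cdots|c_m]\in M_{n,m}(\BC)$ has columns $c_r:=(c_{r,1},\ldots,c_{r,n})^t$; since $C[x_{r,s}]C^*\in M_n(X)^+$ by the matrix cone axiom in Definition \ref{defn:SMOS}, the map $\hat F$ is completely positive. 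Realizing $X$ inside an ambient $\CL(\KH)$ via its partial unitization (Proposition \ref{prop:unital}), I extend $\hat F$ to a completely positive map on $\CL(\KH)$ by Arveson's theorem and apply the Stinespring dilation theorem to obtain a Hilbert space $H$, a $^*$-representation $\rho:\CL(\KH)\to \CL(H)$ and an operator $V_0:\BC^n\to H$ with $\hat F = V_0^*\rho(\cdot)V_0$ on $X$. Compressing $\rho$ to $H_0 := \mathrm{ran}(V_0)$, of dimension $k\leq n$, yields the candidate $\varphi := P_{H_0}\rho(\cdot)|_{H_0}: X\to M_k(\BC)$; setting $\pi(a):=a\otimes I_k$ on $\BC^n\otimes \BC^k\cong \BC^{kn}$ and $\xi:=\sum_{i=1}^n e_i\otimes V_0 e_i$, the cyclicity of $\xi$ under $\pi(M_n)$ follows from $\mathrm{span}\{V_0 e_1,\ldots,V_0 e_n\}=H_0$. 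A routine index-level unpacking (testing \eqref{rel-phi-T} on $v=e_{p,q}\otimes x$, $a=e_{i,j}$, $b=e_{r,s}$) shows that both sides reduce to the common value $(V_0^*\varphi(x)V_0)_{j,s} = \hat F(x)_{j,s}$.

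Part (b) then follows quickly: $\sigma(M_n(X),M_n(X_\#))$-continuity of $f$ means that each matrix coefficient $\hat F(\cdot)_{j,i}=f(e_{j,i}\otimes \cdot)$ of $\hat F$ lies in $X_\#$, so $\hat F:X\to M_n(\BC)$ is $\sigma(X,X_\#)$-continuous. Since $V_0:\BC^n \to H_0=\BC^k$ is surjective, it admits a right inverse $V_0^{\dagger}$, and $\varphi = (V_0^\dagger)^* \hat F(\cdot) V_0^\dagger$; each matrix entry of $\varphi$ is therefore a fixed scalar combination of entries of $\hat F$, so $\varphi$ is $\sigma(X,X_\#)$-continuous.

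The main obstacle will be careful norm bookkeeping to ensure $\|\varphi\|_{\mathrm{cb}}\leq 1$, so that $\varphi\in\Morc(X;M_k(\BC))$ and not merely $\CPB(X;M_k(\BC))$. The assumption $\|f\|=1$ must be converted into a bound on $\|V_0\|$ in the Stinespring factorization, with any excess absorbed into $V_0$ rather than $\varphi$. Since $\varphi$ is the compression of a $^*$-representation of a $C^*$-algebra it is automatically a complete contraction; the delicate point is arranging this consistently with the prescribed form of the cyclic vector $\xi = \sum_i e_i\otimes V_0 e_i$.
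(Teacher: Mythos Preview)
Your argument is correct. For part (a) you have spelled out the Stinespring-type construction that underlies \cite[Lemma 2.4(a)]{Ng-MOS}, which the paper simply cites; the ``main obstacle'' you flag at the end is not an obstacle at all, since $\varphi$ is the compression of a $^*$-representation and hence completely contractive irrespective of $\|V_0\|$, and the lemma imposes no normalisation on $\xi$, so the size of $V_0$ is immaterial.

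For part (b) you take a slightly different route from the paper. The paper argues directly from \eqref{rel-phi-T} and the cyclicity of $\xi$: since $\pi(M_n)\xi$ spans $\BC^{kn}$, every matrix coefficient of $\varphi^{(n)}(v)$ has the form $f(a^*vb)$ for suitable $a,b\in M_n$, and this is $\sigma(M_n(X),M_n(X_\#))$-continuous in $v$ because $M_n(\BC)\cdot M_n(X_\#)\cdot M_n(\BC)\subseteq M_n(X_\#)$; hence $\varphi^{(n)}$, and therefore $\varphi$, is weak-$^*$-continuous. Your approach instead uses the explicit identity $\varphi=(V_0^\dagger)^*\hat F(\cdot)V_0^\dagger$ coming from the surjectivity of $V_0$ onto $H_0$. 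Both are valid; the paper's argument has the mild advantage of using only the \emph{conclusion} of part (a) (the relation \eqref{rel-phi-T} plus cyclicity), whereas yours depends on the particular Stinespring data constructed in your proof of (a).
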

\begin{proof}
	(a) This follows from the second statement of \cite[Lemma 2.4(a)]{Ng-MOS}. 
	
	\smnoind
	(b) As $\xi$ is a cyclic vector for $\pi$, Relation \eqref{rel-phi-T} implies that $\varphi^{(n)}$ is $\sigma(M_n(X), M_n(X_\#))$-continuous (note that $M_n(\BC)\cdot M_n(X_\#)\cdot M_n(\BC)\subseteq M_n(X_\#)$). 
	Hence, $\varphi$ is $\sigma(X, X_\#)$-continuous. 
\end{proof}

\medskip

From Relation \eqref{rel-phi-T}, one obtains
\begin{equation}\label{eqt:f-varphi-expand}
	f(a^*vb)\ =\ \big\la \varphi^{(mn)}(v)(\pi\oplus \cdots \oplus \pi)(b)\xi^{(m)}, (\pi\oplus \cdots \oplus \pi)(a)\xi^{(m)}\big\ra, 
\end{equation}
for every $m\in \mathbb{N}$, $v\in M_{mn}(X)= M_m(M_n(X))$ and  $a,b\in M_{mn,n}(\BC) = M_n(\BC)\oplus \cdots \oplus M_n(\BC)$; 
here $\xi^{(m)} := (\xi, \dots, \xi)\in \BC^{kn}\oplus \cdots \oplus \BC^{kn}$.
In fact, if $a =(a_1,\dots,a_m)$, $b = (b_1,\dots b_m)$ and $v = [v_{i,j}]_{i,j=1,\dots,m}$, then one has 
$$f(a^*vb) = {\sum}_{i,j=1}^m f(a_i^*v_{i,j}b_j) = {\sum}_{i,j=1}^m  \big\la \varphi^{(n)}(v_{i,j})\pi(b_j)\xi, \pi(a_i)\xi\big\ra.$$

\medskip

\begin{prop}\label{prop:dual-sp-MOS}
Let $X$ be a SMOS. 
Then $X^*$ is \textcolor{magenta}{a MOS} if and only if the linear span of $M_\infty(X)^+$ is norm-dense in $M_\infty(X)$. 
\end{prop}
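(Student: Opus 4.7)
The proof begins by translating the condition ``$X^*$ is a MOS'' into a cleaner statement. Since $X^*$ is automatically a SMOS, being a MOS amounts to the matrix cone $M_\infty(X^*)^+$ being proper. Combining the formula $M_\infty(X^*)^+ = \{f \in M_\infty(X^*)^\sa : \theta_f \in \CPB(X; M_\infty(\BC))\}$ with the propertness of each $M_{km}(\BC)^+$, one sees that an element $f \in M_m(X^*)^\sa$ lies in $M_m(X^*)^+ \cap -M_m(X^*)^+$ if and only if $\theta_f^{(k)}$ kills $M_k(X)^+$ for every $k \in \BN$. The implication ``span dense $\Rightarrow$ $X^*$ is a MOS'' is now immediate: specializing to $k = 1$, the hypothesis gives $\overline{\mathrm{span}(X^+)} = X$, so the bounded linear map $\theta_f : X \to M_m(\BC)$ vanishes on a dense subspace and hence on all of $X$, forcing $f = 0$.

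For the converse I argue contrapositively: assume $\mathrm{span}(M_n(X)^+)$ fails to be norm-dense in $M_n(X)$ for some $n$. Hahn-Banach yields a nonzero $\phi \in M_n(X)^*$ killing $M_n(X)^+$. Since $M_n(X)^+ \subseteq M_n(X)^\sa$, the adjoint $\phi^*(y) := \overline{\phi(y^*)}$ also kills the cone, so replacing $\phi$ by $(\phi + \phi^*)/2$ or $(\phi - \phi^*)/(2i)$ we may assume $\phi$ is self-adjoint and nonzero. Writing $\phi([x_{i,j}]) = \sum_{i,j} f_{i,j}(x_{i,j})$ identifies $\phi$ with a nonzero self-adjoint $f = [f_{i,j}] \in M_n(X^*)^\sa$, and the task reduces to showing that $\theta_f^{(m)}(M_m(X)^+) = \{0\}$ for every $m \in \BN$, thereby exhibiting $f$ as a nonzero element of $M_n(X^*)^+ \cap -M_n(X^*)^+$.

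The key device is the cone axiom \eqref{eqt:mat-cone}: for any scalar matrix $c$ and $z \in M_m(X)^+$ (padded to appropriate sizes so that the product makes sense inside some $M_N$), $c^* z c \in M_n(X)^+$ and hence $\phi(c^* z c) = 0$. Taking $c$ with only one nonzero row equal to $\xi^T$ and $z = x \in X^+$ computes directly to $\xi^* \theta_f(x) \xi = 0$ for every $\xi \in \BC^n$; since $f = f^*$ forces $\theta_f(x)$ to be Hermitian, polarization yields $\theta_f(x) = 0$, so $\theta_f|_{X^+} = 0$. Next, for $z = [z_{p,q}] \in M_m(X)^+$ and $c$ with precisely two nonzero rows $\xi^T, \eta^T$ at positions $p \neq q$, expansion of $\phi(c^* z c) = 0$ has its diagonal terms involving $\theta_f(z_{p,p})$ and $\theta_f(z_{q,q})$ already killed by the previous step, and (using $\theta_f(z_{q,p}) = \theta_f(z_{p,q})^*$, which follows from $z = z^*$ and $f = f^*$) leaves
\[
\xi^* A \eta + \overline{\xi^* A \eta} = 0, \qquad A := \theta_f(z_{p,q}).
\]
Replacing $\eta$ by $i\eta$ kills the imaginary part too, so $\xi^* A \eta = 0$ for all $\xi, \eta \in \BC^n$, whence $A = 0$ and consequently $\theta_f^{(m)}(z) = 0$.

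The main obstacle is precisely this contrapositive step. Hahn-Banach provides only a single level of vanishing (namely, on $M_n(X)^+$), whereas cone membership of $f$ demands vanishing of $\theta_f^{(m)}$ on $M_m(X)^+$ for every $m$. Bridging these levels is achieved by the sandwiching trick $z \mapsto c^* z c$ afforded by the cone axiom, after which a standard polarization argument extracts entrywise vanishing of $\theta_f$.
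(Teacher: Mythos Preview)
Your proof is correct, and the converse direction takes a genuinely different route from the paper's.

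For the forward implication you and the paper argue essentially the same way; your specialization to $k=1$ (using that density of $\mathrm{span}\,M_\infty(X)^+$ forces density of $\mathrm{span}\,X^+$ via compression to the $(1,1)$ entry) is a harmless variation of the paper's direct use of $\theta_g^{(\infty)}$ vanishing on all of $M_\infty(X)^+$.

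For the converse, the paper does not identify the Hahn--Banach functional $\phi\in M_n(X)^*$ with an element of $M_n(X^*)$ at all. Instead it feeds $\phi$ (viewed as a positive functional on $M_n(X)$) into the GNS-type Lemma~\ref{lem-BM1.1}(a) to extract a completely positive $\varphi:X\to M_k(\BC)$ together with a cyclic representation of $M_n(\BC)$ satisfying $\phi(a^*vb)=\langle\varphi^{(n)}(v)\pi(b)\xi,\pi(a)\xi\rangle$; a polarization argument on the \emph{subspace} $E=\mathrm{span}\,M_\infty(X)^+$ (namely $a^*Eb\subseteq E$) then shows $-\varphi$ is completely positive, and the matrix $g$ with $\theta_g=\varphi$ is the desired nonzero element of $M_k(X^*)^+\cap -M_k(X^*)^+$. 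Your argument is more elementary: you use the entrywise pairing $\phi([x_{i,j}])=\sum f_{i,j}(x_{i,j})$ to write down $f\in M_n(X^*)^\sa$ directly, and then the sandwiching $c^*zc\in M_n(X)^+$ together with a polarization on \emph{vectors} $\xi,\eta\in\BC^n$ yields $\theta_f^{(m)}(M_m(X)^+)=\{0\}$. Your approach avoids Lemma~\ref{lem-BM1.1} entirely and is self-contained; the paper's approach, while less direct, ties the result into the structural machinery (the same lemma is reused in Proposition~\ref{prop:reg}) and makes transparent that the witness arises from a genuine completely positive map.
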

\begin{proof}
Suppose that the linear span of $M_\infty(X)^+$ is norm-dense in $M_\infty(X)$. 
	Consider $n \in \BN$ and $g\in M_n(X^*)^+ \cap - M_n(X^*)^+$. 
	By the definition, $\theta_g \in \CPB(X;M_n(\BC))\cap -\CPB(X;M_n(\BC))$. 
	This means that $\theta_g^{(\infty)}(M_\infty(X)^+) = \{0\}$, and the norm-density assumption tells us that  $\theta_g^{(\infty)}$ is zero; i.e., $g=0$. 
	
	Conversely, assume that the linear span, $E$, of $M_\infty(X)^+$ is not norm-dense in $M_\infty(X)$.
	This implies that $E\cap M_n(X)^\sa$ is not norm-dense in $M_n(X)^\sa$ for some $n\in \BN$. 
	Hence, there exists a non-zero bounded linear functional $f$ on $M_n(X)^\sa$ such that 
	$f\big(E\cap M_n(X)^\sa\big) = \{0\}$. 
	One may extend $f$ to a bounded complex linear functional $f\in M_n(X)^*\setminus \{0\}$ satisfying 
	\begin{equation}\label{eqt:F-vanish}
		f\big(E\cap M_n(X)\big) = \{0\}. 
	\end{equation}
	We assume that $\|f\| =1$.  
	The relation $M_n(X)^+\subseteq E\cap M_n(X)^\sa$ implies that $f\in M_n(X^*)^+$. 
	Let $\varphi\in \Morc(X; M_k(\BC))$ be the map as in Lemma \ref{lem-BM1.1}(a) for this positive linear functional $f$. 
	
	Since $a^*M_\infty(X)^+a\subseteq M_\infty(X)^+$ ($a\in M_\infty(\BC)$), a polarization argument tells us that 
	\begin{equation}\label{eqt:polar}
	a^*(E\cap M_{mn}(X))b\subseteq E\cap M_n(X) \qquad (a,b\in M_{mn,n}(\BC), m\in \BN).
	\end{equation}
	It then follows from 
	$M_{mn}(X)^+\subseteq E\cap M_{mn}(X)$
	as well as  Relations \eqref{rel-phi-T}, \eqref{eqt:F-vanish} and \eqref{eqt:polar} that $-\varphi:X\to M_n(\BC)$ is $n$-positive (i.e. $-\varphi^{(n)}$ is positive), and hence is completely positive (alternatively, one can replace Relation \eqref{rel-phi-T} with  Relation \eqref{eqt:f-varphi-expand} to show directly that $\varphi^{(mn)}(v) = 0$ for every $v\in M_{mn}(X)^+$).
	Now, we consider $g\in B_{M_n(X^*)^\sa}\setminus \{0\}$ with $\varphi = \theta_g$. 
	As both $\varphi$ and $-\varphi$ are completely positive, we know that $g$ is a non-zero element in $M_n(X^*)^+ \cap - M_n(X^*)^+$.
\end{proof}

\medskip

In order to study a dual MOS $D$,  we need to consider certain nice ``representation'' of it. 
This ``representation'' mimics the corresponding construction in \cite{Ng-MOS} (whose idea comes from \cite{HN}). 
More precisely, for any $n\in \BN$, we denote by $\WQ^{D}_n$ the set of all $\sigma(D, D_\#)$-continuous maps in $\Morc(D;M_n(\BC))$. 
Consider the von Neumann algebra
\begin{equation}\label{eqt:def-N(D)}
N(D):={\bigoplus}_{n\in \BN}^{\ell_\infty} \ell_\infty\big(\WQ^{D}_n; M_n(\BC)\big), 
\end{equation}
where $\ell_\infty\big(\WQ^{D}_n; M_n(\BC)\big)$ is the algebra of bounded functions from the set $\WQ^{D}_n$ to the algebra $M_n(\BC)$. 

\medskip

Note that the predual of $\ell_\infty\big(\WQ^{D}_n; M_n(\BC)\big)$ is $\ell_1\big(\WQ^{D}_m; M_m(\BC)^*\big)$, where $\ell_1\big(\WQ^{D}_m; M_m(\BC)^*\big)$ is the set of functions $f:\WQ^{D}_m \to M_m(\BC)^*$ with 
\begin{equation}\label{eqt:def-l1}
{\sum}_{\varphi\in \WQ^{D}_m} \|f(\varphi)\|_{M_m(\BC)^*} < \infty.
\end{equation}
The duality is given by 
\begin{equation}\label{eqt:dual-l1}
g(f):= {\sum}_{\varphi\in \WQ^{D}_m} f(\varphi)(g(\varphi)) \qquad \big(g\in \ell_\infty\big(\WQ^{D}_n; M_n(\BC)\big); f\in \ell_1\big(\WQ^{D}_m; M_m(\BC)^*\big)\big).
\end{equation}
Thus, one has 
$N(D)_* = {\bigoplus}_{m\in \BN}^{\ell_1} \ell_1(\WQ^{D}_m; M_m(\BC)^*).$
For simplicity, we will identify $\ell_1(\WQ^{D}_m; M_m(\BC)^*)$ with a subspace of $N(D)_*$ in the canonical way.

\medskip

\begin{prop}\label{prop:reg}
Let $D$ be a dual MOS with a predual $D_\#$. 
Define a map $\mu_{D}: D \to N(D)$ by 
$$\mu_D(x) := \big(\mu_D(x)_n\big)_{n\in \BN}\qquad (x\in D),$$ 
where the element $\mu_D(x)_n\in \ell_\infty\big(\WQ^D_n, M_n(\BC)\big)$ is given by 
$\mu_{D}(x)_n(\varphi) := \varphi(x)$ ($\varphi\in \WQ^{D}_n$).
We equip the subspace $D^\reg:= \overline{\mu_D(D)}^{\sigma(N(D),N(D)_*) }$ with the dual operator system structure induced from $N(D)$.  

\smnoind
(a)	$\mu_D$ is a weak-$^*$-continuous completely order monomorphic complete contraction. 

\smnoind
(b) The following statements are equivalent. 
\begin{enumerate}
	\item $\mu_D$ is a complete embedding (respectively, complete isometry). 
	
	\item $D$ is a dual quasi-operator system (respectively, dual operator system).
	
	\item There exists a weak-$^*$-continuous completely positive  complete embedding (respectively, complete isometry) $\Psi$ from $D$ to some $\CL(\KH)$. 
\end{enumerate}

\smnoind
(c) If $D$ is a dual quasi-operator system, then $D^\reg = \mu_D(D)$, and $\mu_D$ is a weak-$^*$-homeomorphic MOS isomorphism from $D$ onto $\mu_D(D)$. 

\smnoind
(d) For any dual operator system $V$ and weak-$^*$-continuous map $\Phi\in \Morc(D;V)$, there is a unique weak-$^*$-continuous map $\overline{\Phi}\in \Morc\big(D^\reg;V\big)$ such that $\Phi = \overline{ \Phi}\circ \mu_D$.   
\end{prop}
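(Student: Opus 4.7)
The plan is to establish (a) directly from the definitions, derive (b) from Lemma~\ref{lem:image-weak-st-cont-bdd-below} together with a compression argument inside $\CL(\KH)$, obtain (c) as an immediate corollary, and construct the extension in (d) by lifting $\Phi$ to a natural transfer map between $N(D)$ and $N(V)$. I expect the main obstacle to arise in the implication $(3)\Rightarrow (1)$ of (b), where the concrete realization $\Psi$ must be shown to produce enough maps in $\WQ^D_m$ to recover norms.

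For (a), weak-$^*$-continuity of $\mu_D$ reduces, via the description of $N(D)_*$ and \eqref{eqt:dual-l1}, to checking that $x \mapsto \sum_{\varphi \in \WQ^D_m} f(\varphi)(\varphi(x))$ lies in $D_\#$ for each $f \in \ell_1(\WQ^D_m; M_m(\BC)^*)$; each term lies in $D_\#$ since $\varphi \in \WQ^D_m$, and \eqref{eqt:def-l1} ensures norm-convergence of the series, so the limit remains in the norm-closed subspace $D_\# \subseteq D^*$. Complete contractivity and complete positivity follow from $\|\mu_D^{(n)}(x)\| = \sup_{m,\varphi} \|\varphi^{(n)}(x)\|$ combined with $\varphi \in \Morc(D; M_m(\BC))$. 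For complete order monomorphicity, suppose $x \in M_n(D)^\sa$ satisfies $\mu_D^{(n)}(x) \ge 0$ but $x \notin M_n(D)^+$; the $\sigma(M_n(D), M_n(D_\#))$-closedness of $M_n(D)^+$ (Definition~\ref{defn:dual-quasi-op-sys}(a)) provides a weak-$^*$-continuous positive functional $f \in (M_n(D)^*)^+$ separating $x$, and Lemma~\ref{lem-BM1.1}(b) then produces a weak-$^*$-continuous $\varphi \in \Morc(D; M_k(\BC))$ with $\varphi^{(n)}(x)$ not positive, contradicting $\mu_D^{(n)}(x) \ge 0$.

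For (b), $(2)\Rightarrow (3)$ is the definition, and $(1)\Rightarrow (2)$ applies Lemma~\ref{lem:image-weak-st-cont-bdd-below} to the bounded-below weak-$^*$-continuous map $\mu_D$ to see that $\mu_D(D)$ is weak-$^*$-closed in $N(D)$ and $\mu_D$ is a weak-$^*$-homeomorphism onto its image; composing with a faithful normal $^*$-representation of the von Neumann algebra $N(D)$ on some $\CL(\KH)$ yields the required realization. For $(3)\Rightarrow (1)$, given $\Psi : D \to \CL(\KH)$ a weak-$^*$-continuous completely positive complete embedding (respectively, complete isometry), compute $\|\Psi^{(n)}(x)\| = \sup_p \|(p\otimes I_n)\Psi^{(n)}(x)(p\otimes I_n)\|$ over finite-rank projections $p \in \CL(\KH)$, as in \eqref{eqt:norm-L(H)}; each compression $\psi(T) := pTp$ is a weak-$^*$-continuous completely positive complete contraction into $M_m(\BC)$ with $m = \mathrm{rank}\,p$, so (after rescaling in the embedding case) $\psi \circ \Psi$ lies in $\WQ^D_m$ and the supremum is controlled by $\|\mu_D^{(n)}(x)\|$, producing the required bound-below or isometry when combined with (a). Part (c) then follows immediately: (b) ensures $\mu_D$ is a complete embedding, Lemma~\ref{lem:image-weak-st-cont-bdd-below} identifies $D^\reg = \mu_D(D)$ and $\mu_D$ as a weak-$^*$-homeomorphism, and (a) supplies the MOS-isomorphism.

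For (d), uniqueness is immediate from the weak-$^*$-density of $\mu_D(D)$ in $D^\reg$. For existence, define the transfer map $\Theta : N(D) \to N(V)$ by
$$\Theta(g)_n(\psi) := g_n(\psi \circ \Phi) \qquad (g \in N(D),\ \psi \in \WQ^V_n,\ n \in \BN),$$
which is well-defined because $\psi \circ \Phi \in \WQ^D_n$ whenever $\psi \in \WQ^V_n$ and $\Phi \in \Morc(D; V)$ is weak-$^*$-continuous. A direct check (pulling predual elements back via $\psi \mapsto \psi \circ \Phi$, and exploiting the fiberwise structure of $N(D)$ and $N(V)$) shows that $\Theta$ is a weak-$^*$-continuous completely positive complete contraction satisfying $\Theta \circ \mu_D = \mu_V \circ \Phi$. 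Since $V$ is a dual operator system, applying (c) to $V$ gives $\mu_V(V) = V^\reg$ weak-$^*$-closed in $N(V)$; the weak-$^*$-continuity of $\Theta$ together with the weak-$^*$-density of $\mu_D(D)$ in $D^\reg$ then forces $\Theta(D^\reg) \subseteq \mu_V(V)$. Setting $\overline{\Phi} := \mu_V^{-1} \circ \Theta|_{D^\reg}$ produces the required extension, whose weak-$^*$-continuity, complete positivity, and complete contractivity all inherit from the weak-$^*$-homeomorphic MOS-isomorphism $\mu_V$ (via (c) applied to $V$) together with the corresponding properties of $\Theta$.
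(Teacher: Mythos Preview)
Your proof is correct and follows essentially the same strategy as the paper's: the arguments for (a), (b), (c) are nearly identical (separation via the weak-$^*$-closed cone combined with Lemma~\ref{lem-BM1.1}(b) for (a), Lemma~\ref{lem:image-weak-st-cont-bdd-below} and finite-rank compressions as in \eqref{eqt:norm-L(H)} for (b)). In (d) your direct definition of the transfer map $\Theta:N(D)\to N(V)$ coincides with the paper's dual $\Psi^*$ of a predual map $\Psi:N(V)_*\to N(D)_*$, and your formulation is in fact slightly cleaner since it does not require the paper's preliminary reduction to the case where $\Phi(D)$ is weak-$^*$-dense in $V$ (which the paper invokes only to make $\psi\mapsto\psi\circ\Phi$ injective so that $\Psi$ is well-defined on the predual side).
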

\begin{proof}
	(a) Since elements in $\WQ^D_n$ are completely contractive for each $n\in \BN$, the map $\mu_D$ is a complete contraction. 
	Consider $n\in \BN$ and 
	$$f\in \ell_1\big(\WQ^D_n; M_n(\BC)^*\big)\subseteq N(D)_*.$$ 
	For any $\varphi\in \WQ^D_n$, one knows that $\varphi^*$  is a contraction from $M_n(\BC)^*$ to $D_\#$.
	Hence, Relation \eqref{eqt:def-l1} gives
	$${\sum}_{\varphi\in \WQ^D_n} \|\varphi^*(f(\varphi))\| \leq {\sum}_{\varphi\in \WQ^D_n} \|f(\varphi)\| < \infty,$$ 
	and thus, ${\sum}_{\varphi\in \WQ^D_n} \varphi^*(f(\varphi))$ exists in $D_\#$ (note that $D_\#$ is a Banach space by its definition).
	As
	$$\mu_D^*(f)(x) = f(\mu_D(x)) = {\sum}_{\varphi\in \WQ^D_n} f(\varphi)\big(\varphi(x)\big) \qquad (x\in D),$$
	we have $\mu_D^*(f) = {\sum}_{\varphi\in \WQ^D_n} \varphi^*(f(\varphi))\in D_\#$. 
	Thus, the norm-continuity of $\mu_D^*$ implies that $\mu_D^*(N(D)_*)\subseteq D_\#$.
	This produces the weak-$^*$-continuity of $\mu_D$. 
	
	Next, we will verify that $\mu_D$ is injective.
	In fact, as $\mu_D$ preserves the involution, it suffices to establish the injectivity of $\mu_D|_{D^\sa}$. 
	Consider $x\in D^\sa$ with $\mu_D(x) = 0$. 
	We want to show that $x \in D^+$. 
	Suppose on the contrary that $x\notin D^+$. 
	Since $D^+$ is weak-$^*$-closed, we can find a weak-$^*$-continuous real linear functional $f: D^\sa \to \BR$ such that $f(D^+) \subseteq \RP$ but $f(x) < 0$. 
	As $D^\sa$ is weak-$^*$-closed (see Remark \ref{rem:dual-MOS}(b)), a rescaling of the complexification of $f$ belongs to $\WQ^D_1$. 
	However, this contradicts with the fact that $\mu_D(x)\in N(D)^+$. 
	Similarly, we have $x\in -D^+$. 
	Consequently, $x \in D^+\cap -D^+ = \{0\}$.

	Finally, we will establish that $\mu_D$ is completely order monomorphic. 
	Indeed, it is clear that $\mu_D$ is completely positive, because elements in $\WQ^D_n$ are completely positive. 
	Suppose on the contrary that 
	$$M_\infty(D)^+ \subsetneq \big(\mu_D^{(\infty)}\big)^{-1}\big(M_\infty(N(D))^+\big).$$ 
	As $\mu_D$ is injective and preserves the involuation, the above means that one can find an integer $n\in \BN$ as well as $v_0\in M_n(D)^\sa\setminus M_n(D)^+$ satisfying $\mu_{D}^{(n)}(v_0) \in M_n(N(D))^+$. 
	Moreover, since both $M_n(D)^\sa$ and $M_n(D)^+$ are weak-$^*$-closed, a similar argument as that for the injectivity of $\mu_D$ above gives $f\in \WQ^{M_n(D)}_1$ such that $f (v_0) \notin \mathbb{R}^+$. 
	Lemma \ref{lem-BM1.1}(b) will then produce an integer $k\leq n$, an element $\varphi\in \WQ^{D}_k$ and a cyclic representation $(\mathbb{C}^{kn}, \pi, \xi)$ for $M_n(\BC)$ such that 
	$$f(v)\ =\ \la \varphi^{(n)}(v)\xi, \xi\ra \qquad (v\in M_{n}(D)).$$
	Since $f(v_0) \notin \BR^+$, one has $\varphi^{(n)}(v_0)\notin M_{nk}(\BC)^+$. 
	Hence, we arrise at the contradiction that  $\mu_{D}^{(n)}(v_0)\notin M_n(N(D))^+$ (one may regard $\WQ^D_k\subseteq \WQ^D_n$).

	\smnoind
	(b) $(1)\Rightarrow (2)$ If $\mu_D$ is a complete embedding (respectively, complete isometry), then part (a) above and  Lemma \ref{lem:image-weak-st-cont-bdd-below} implies that $D$ is a dual quasi-operator system (respectively, dual operator system). 
	
	\noindent
	$(2)\Rightarrow (3)$
	This follows from the definition. 
	
	\noindent
	$(3)\Rightarrow (1)$
	By rescaling if necessary, one may assume that $\Psi\in \Morc(D; \CL(\KH))$ with 
	$$\gamma:= \sup \{\|\Psi^{(\infty)}(x)\|:x\in M_\infty(D); \|x\| = 1 \} > 0$$ (respectively, $\gamma = 1$). 
	For any $m\in \BN$ and $y\in M_m(\CL(\KH))$, we know from Lemma \ref{lem:norm-pos-cont} that 
$\|y\| = {\sup}_{\psi\in \WQ^{\CL(\KH)}_n; n\in \BN} \|\psi^{(m)}(y)\|.$
	Note that $\psi\circ \Psi \in \WQ^D_n$, for every $\psi\in \WQ^{\CL(\KH)}_n$.  
	Thus, when $x\in M_m(D)$, 
	$$\|\mu_D^{(m)}(x)\| \geq  {\sup}_{\psi\in \WQ^{\CL(\KH)}_n;n\in \BN} \big\|\psi^{(m)}\big(\Psi^{(m)}(x)\big)\big\| = \big\|\Psi^{(m)}(x)\big\|\geq \gamma \|x\|.$$
	
	\smnoind
	(c) This follows directly from parts (a) and (b) above, as well as Lemma \ref{lem:image-weak-st-cont-bdd-below}. 
	
	\smnoind
	(d) Since the uniqueness follows from the weak-$^*$-continuity of $\overline{\Phi}$, we will only establish its existence. 
	Moreover, by replacing $V$ with $\overline{\Phi(D)}^{\sigma(V,V_\#)}$ if necessary, we may assume that $\Phi(D)$ is weak-$^*$-dense in $V$. 
	Hence, if $\psi_1, \psi_2\in \WQ^V_n$ satisfying $\psi_1\circ \Phi = \psi_2\circ \Phi$, then $\psi_1 = \psi_2$. 
	This means that, under the weak-$^*$-density assumption of $\Phi(D)$, one may regard $\WQ^V_n$ as a subset of $\WQ^D_n$, via compositions with $\Phi$. 
	We obtain from this a complete isometry $\Psi_n: \ell_1\big(\WQ^V_n;M_n(\BC)^*\big) \to \ell_1\big(\WQ^D_n;M_n(\BC)^*\big)$ such that for $f\in \ell_1\big(\WQ^V_n;M_n(\BC)^*\big)$, 
	$$\Psi_n(f)(\phi) := \begin{cases}
	f(\psi) & \text{when }\phi = \psi \circ \Phi \text { for an element }\psi\in \WQ^V_n\\
	0 & \text{otherwise}
	\end{cases} \qquad (\phi \in \WQ^D_n).$$ 
	The above then produces a completely positive complete isometry $\Psi: N(V)_*\to N(D)_*$ 
	satisfying
	$$\Psi\big((f_n)_{n\in \BN}\big) = \big(\Psi_n(f_n)\big)_{n\in \BN} \qquad ((f_n)_{n\in \BN}\in N(V)_*).$$ 
	Pick any $x\in D$. 
	As $\Psi^*(\mu_D(x)) \in N(V)$, one has $\Psi^*(\mu_D(x)) = \big(\Psi^*\big(\mu_D(x)\big)_n\big)_{n\in \BN}$, where $\Psi^*\big(\mu_D(x)\big)_n\in \ell_\infty\big(\WQ^V_n; M_n(\BC)\big)$ is given by
	$$\Psi^*\big(\mu_D(x)\big)_n= \mu_D(x)_n \circ \Psi_n \in \ell_1\big(\WQ^V_n,M_n(\BC)^*\big)^*;$$
	here, we identify $\ell_\infty\big(\WQ^D_n,M_n(\BC)\big)$ with $\ell_1\big(\WQ^D_n,M_n(\BC)^*\big)^*$ as in Relation \eqref{eqt:dual-l1}. 
	If $\psi\in \WQ^V_n$ and $\alpha\in M_n(\BC)^*$, we define  $\delta_{\psi}^\alpha\in \ell_1(\WQ^V_n,M_n(\BC)^*)$ by $\delta_{\psi}^\alpha(\psi) := \alpha$ and $\delta_{\psi}^\alpha(\phi) := 0$ when $\phi\neq \psi$. 
	Then 
	$$\big(\mu_D(x)_n \circ \Psi_n\big)(\delta_{\psi}^\alpha) = \mu_D(x)_n(\delta_{\psi\circ \Phi}^\alpha) = \alpha\big(\psi(\Phi(x))\big) = \mu_V\big(\Phi(x)\big)_n(\delta_{\psi}^\alpha)$$ 
	(the second and the third equalities follow from Relation \eqref{eqt:dual-l1}). 
	These show that $\Psi^*\circ \mu_D = \mu_V\circ \Phi$. 
	
	Now, parts (a), (b) and (c) tells us that $\mu_V(V)$ is weak-$^*$-closed in $N(V)$ and $\mu_V:V\to \mu_V(V)$ is a weak-$^*$-homeomorphic completely order monomorphic complete isometry. 
	Since $\Psi^*$ is weak-$^*$-continuous, we know that $\Psi^*\big(D^\reg\big)\subseteq \mu_V(V)$. 
	Moreover, as $\Psi^*$ is a completely positive complete contraction, the map $\overline{\Phi}:= \mu_V^{-1}\circ \Psi^*|_{D^\reg}$ will satisfy the requirement.  
\end{proof}

\medskip

By part (a) above, there always exists a new matrix norm on $D$ such that $D$ becomes a subsystem of a dual operator system without changing the matrix cone on $D$. 
Moreover, part (b) tells us that this new matrix norm is equivalent to the original one on $D$ if and only if $D$ is a dual quasi-operator system. 
On the other hand, parts (a) and (d) above tell us that $D^\reg$ can be regarded as a universal ``dual operator system cover'' of $D$.

\medskip

The two parts of following corollary are direct consequences of Proposition  \ref{prop:reg}(b).

\medskip

\begin{cor}\label{cor:reg}
(a) Let $T$ be a dual quasi-operator system (respectively, dual operator system). 
If $S$ is the involutive dual operator space $T$ equipped with a weak-$^*$-closed matrix cone smaller than the one on $T$, then $S$ is again a dual quasi-operator system (respectively, dual operator system). 

\smnoind
(b) Suppose that $X$ is a SMOS with  $X^*$ being a dual quasi-operator system. 
If $Y$ is the operator space $X$ equipped with a closed matrix cone larger than the one on $X$, then $Y^*$ is a dual quasi-operator system. 
\end{cor}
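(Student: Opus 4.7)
The plan is to invoke Proposition \ref{prop:reg}(b), specifically the equivalence $(2)\Leftrightarrow(3)$, in both parts. For part (a), since $T$ is a dual quasi-operator system (respectively, dual operator system), condition (3) yields a weak-$^*$-continuous completely positive complete embedding (respectively, complete isometry) $\Psi:T\to \CL(\KH)$. I will argue that the \emph{same} map, regarded as a map $\Psi:S\to\CL(\KH)$, witnesses condition (3) for $S$. As $S$ and $T$ agree as involutive dual operator spaces with the same predual, $\Psi$ remains weak-$^*$-continuous, a complete embedding (respectively, complete isometry), and involution-preserving. Complete positivity on $S$ is immediate from the inclusion $M_\infty(S)^+\subseteq M_\infty(T)^+$. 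Before invoking $(3)\Rightarrow(2)$, I check that $S$ really is a dual MOS: its matrix cone is weak-$^*$-closed by hypothesis, and is proper because it sits inside the proper cone of $T$.

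For part (b), the plan is to reduce to part (a) applied to $T:=X^*$ and $S:=Y^*$. Since $X$ and $Y$ share the same underlying operator space, $X^*$ and $Y^*$ coincide as involutive dual operator spaces with common predual. Two verifications remain. First, $M_\infty(Y^*)^+\subseteq M_\infty(X^*)^+$: any $f\in M_\infty(Y^*)^+$ yields $\theta_f$ completely positive on $Y$, and since $M_\infty(X)^+\subseteq M_\infty(Y)^+$, the same $\theta_f$ is completely positive on $X$, so $f\in M_\infty(X^*)^+$. Second, $M_\infty(Y^*)^+$ is weak-$^*$-closed: it is the intersection of the weak-$^*$-closed self-adjoint subspace with the weak-$^*$-closed preimages of $M_\infty(\BC)^+$ under the weak-$^*$-continuous evaluation maps $f\mapsto \theta_f(y)$ indexed by $y\in M_\infty(Y)^+$. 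Part (a) then gives the conclusion.

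No serious obstacle is anticipated; the argument is essentially bookkeeping of the equivalences in Proposition \ref{prop:reg}(b) together with two elementary observations: positivity against a smaller domain cone is a weaker constraint, so preserved by the representing maps, and enlarging the domain cone shrinks the dual cone. The one point that demands a small amount of care is verifying, in each part, that the candidate space is a dual MOS in the strict sense of Definition \ref{defn:dual-quasi-op-sys}(a) before invoking the proposition, but both properness and weak-$^*$-closedness of the relevant cones fall out of the hypotheses.
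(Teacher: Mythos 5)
Your proposal is correct and follows the paper's own route: the paper simply notes that both parts are direct consequences of Proposition \ref{prop:reg}(b), and your argument is exactly that — use the equivalence $(2)\Leftrightarrow(3)$ there, observing that a completely positive witness for $T$ (resp. $X^*$) remains completely positive for the smaller cone, after the routine check that $S$ (resp. $Y^*$) is a dual MOS. The verifications you supply (properness from containment in a proper cone, weak-$^*$-closedness of the dual cone of $Y$, and the inclusion $M_\infty(Y^*)^+\subseteq M_\infty(X^*)^+$) are precisely the bookkeeping the paper leaves implicit.
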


\medskip

For a dual MOS $D$ with a predual $D_\#\subseteq D^*$, we will equip $D_\#$ with the SMOS structure induced from $D^*$. 
The following result tells us that the MOS structure on $D$ is the dual SMOS structure coming from $D_\#$. 

\medskip

\begin{cor}\label{cor:reform-CP}
Let $D$ be a dual MOS with a predual $D_\#$.
The canonical surjection $\upsilon_D: D \to (D_\#)^*$ is a completely order monomorphic complete isometry. 
\end{cor}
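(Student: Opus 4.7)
The plan is to reduce everything to Proposition \ref{prop:reg}(a). By the very definition of a dual MOS, $\upsilon_D: D \to (D_\#)^*$ is already a surjective complete isometry, so what remains is to verify that it is completely order monomorphic; i.e., for each $n \in \BN$ and each $x \in M_n(D)^\sa$, I must show $x \in M_n(D)^+$ if and only if $\upsilon_D^{(n)}(x) \in M_n((D_\#)^*)^+$.

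For the forward direction, I unfold the definition of the dual SMOS cone on $(D_\#)^*$: the condition $\upsilon_D^{(n)}(x) \in M_n((D_\#)^*)^+$ amounts to $\theta_{\upsilon_D^{(n)}(x)}: D_\# \to M_n(\BC)$, $g \mapsto [g(x_{i,j})]$, being completely positive. Given $x \in M_n(D)^+$ and $f = [f_{a,b}] \in M_k(D_\#)^+$, the associated map $\theta_f: D \to M_k(\BC)$ is completely positive (this is precisely what $f \in M_k(D^*)^+$ means), so $\theta_f^{(n)}(x) \in M_{nk}(\BC)^+$. The matrix $\theta_{\upsilon_D^{(n)}(x)}^{(k)}(f)$ carries the same entries as $\theta_f^{(n)}(x)$, only with the index pairs $(i,j)$ and $(a,b)$ swapped, and hence is obtained from it by conjugation with the canonical permutation unitary $\BC^n \otimes \BC^k \to \BC^k \otimes \BC^n$, a $^*$-isomorphism of matrix algebras. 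Positivity is therefore preserved, which gives complete positivity of $\upsilon_D$.

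For the reverse direction, assume $\upsilon_D^{(n)}(x) \in M_n((D_\#)^*)^+$ and fix $k \in \BN$ and $\varphi \in \WQ^D_k$. Since $\varphi: D \to M_k(\BC)$ is a weak-$^*$-continuous completely positive complete contraction, it arises from a unique $f_\varphi \in M_k(D_\#)^+$ via $\varphi = \theta_{f_\varphi}$. Running the same flip identification in reverse, the complete positivity of $\theta_{\upsilon_D^{(n)}(x)}$ forces $\varphi^{(n)}(x) = \theta_{f_\varphi}^{(n)}(x) \in M_{nk}(\BC)^+$. As this holds for every $k$ and every $\varphi \in \WQ^D_k$, it is exactly the statement $\mu_D^{(n)}(x) \in M_n(N(D))^+$, so the complete order monomorphism of $\mu_D$ established in Proposition \ref{prop:reg}(a) delivers $x \in M_n(D)^+$.

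The main obstacle is the clean bookkeeping of the flip identification between $\theta_{\upsilon_D^{(n)}(x)}^{(k)}(f)$ and $\theta_f^{(n)}(x)$, together with the correspondence $\WQ^D_k \leftrightarrow M_k(D_\#)^+$ (modulo the trivial rescaling that absorbs complete contractivity). Once these routine identifications are in place, the corollary is essentially a direct application of Proposition \ref{prop:reg}(a), so no further substantial work is needed.
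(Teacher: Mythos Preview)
Your proposal is correct and follows essentially the same route as the paper's own proof: both observe that $\upsilon_D$ is a complete isometry by definition, handle the forward direction via the flip identity $\theta_{\upsilon_D^{(n)}(x)}^{(k)}(f) \leftrightarrow \theta_f^{(n)}(x)$, and for the reverse direction use Lemma~\ref{lem:weak-st-cont} to identify each $\varphi\in \WQ^D_k$ with some $f_\varphi\in M_k(D_\#)^+$ before invoking Proposition~\ref{prop:reg}(a). The only cosmetic difference is that the paper writes the flip simply as the equality $\theta_s^{(m)}(\omega)=\theta_\omega^{(n)}(s)$ rather than invoking the permutation unitary explicitly.
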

\begin{proof}
By the definition, $\upsilon_D$ is a complete isometry. 
It remains to verify that $\CPB(D_\#;M_\infty(\BC)) = M_\infty(D)^+$. 
Consider $n\in \BN$ as well as $s = [s_{i,j}]\in M_n(D)$. 
If  $s\in M_n(D)^+$, then for any $m\in \BN$ and $\omega = [\omega_{k,l}]\in M_m(D_\#)^+\subseteq M_m(D^*)^+$, one has 
$$\theta_s^{(m)}(\omega) = \theta_\omega^{(n)}(s)\in M_{mn}(\BC)^+, $$
which implies that $\theta_s\in \CPB(D_\#;M_n(\BC))$. 

Conversely, suppose that $\theta_s\in \CPB(D_\#;M_n(\BC))$. 
Take $m\in \BN$ and $\varphi\in \WQ^D_m \subseteq \CPB(D;M_m(\BC)) = \theta(M_m(D^*)^+)$. 
As $\varphi$ is weak-$^*$-continuous, Lemma \ref{lem:weak-st-cont} gives $\omega\in M_m(D_\#)$ with $\varphi = \theta_\omega$. 
Thus, $\omega\in M_m(D^*)^+ \cap M_m(D_\#) = M_m(D_\#)^+$. 
This means that $\varphi^{(n)}(s) = \theta_s^{(m)}(\omega) \in M_{mn}(\BC)^+$. 
Since $m$ and $\varphi$ are arbitrary, one has $\mu_D^{(n)}(s) \geq 0$, and it follows from Proposition  \ref{prop:reg}(a) that $s\in M_n(D)^+$. 
\end{proof}

\medskip

Suppose that $X$ is an operator space with its matrix norm being denoted by $\|\cdot\|$.
If $\rn$ is another matrix norm  on $X$, we denote by $\rn^*$ the dual matrix norm on $(X, \lambda)^*$. 
As usual, we say that $\rn$ is \emph{equivalent to} $\|\cdot\|$, and is denoted by $\rn\sim \|\cdot\|$,  if the identity map from $(X, \rn)$ to $(X, \|\cdot\|)$ is a complete embedding. 
In this case, we have $(X, \lambda)^* = X^*$. 
Let us also set 
$$\CN_X:= \{\rn^*: \rn \text{ is a matrix norm on } X \text{ with }\rn\geq \|\cdot\| \text{ and } \rn \sim \|\cdot\| \}.$$
Obviously, every matrix norm on $X$ that is equivalent to $\|\cdot\|$ can be rescaled to produce a dual matrix norm in $\CN_X$. 

\medskip

The following theorem gives a characterization of the situation when the dual SMOS $X^*$ of \textcolor{magenta}{a complete} operator system $X$ is a dual quasi-operator system. 
In this case, we also obtain a ``universal'' dual operator system that is MOS isomorphic to $X^*$. 

\medskip

\begin{thm}\label{thm:quasi-oper-sys}
Let $X$ be a \textcolor{magenta}{complete} SMOS. 
We denote $B_{M_\infty(X)}^+ := M_\infty(X)^+\cap B_{M_\infty(X)}$,  
$$U_X:=\big\{u - v: u,v\in B_{M_\infty(X)}^+ \big\}, \qquad X^\rd:=\mu_{X^*}(X^*),$$  
and
$\CN_X^\mathrm{sys}:= \{\rn^*\in \CN_X: X^* \text{ becomes a dual operator system under the dual matrix cone and }\rn^* \}.$

\smnoind
(a) The following statements are equivalent. 
\begin{enumerate}
	\item $X^*$ is a dual quasi-operator system.
	
	\item $U_X$ is a zero neighborhood of the normed space $M_\infty(X)^\sa$.
	
	\item The norm closure, $\overline{U_X}$, of $U_X$ is a zero neighborhood of $M_\infty(X)^\sa$.
	
	\item $\CN_X^\mathrm{sys} \neq \emptyset$. 
\end{enumerate}

\smnoind
(b) Suppose that $X^*$ is a dual quasi-operator system, and 
	$$\|f\|^\rd:= \|\mu_{X^*}^{(\infty)}(f)\| = \sup \big\{\big\|\theta_f^{(n)}(x)\big\|: x\in M_n(X)^+; \|x\|\leq 1; n\in \BN\big\} \quad (f\in M_\infty(X^*)).$$ 
Then $\|\cdot \|^\rd$ is the largest element in $\CN_X^\mathrm{sys}$.  
\end{thm}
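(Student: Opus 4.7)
My plan for part (a) is to establish (1) $\Leftrightarrow$ (4), (2) $\Leftrightarrow$ (3), and (1) $\Leftrightarrow$ (3); together these give the full equivalence. For (4) $\Rightarrow$ (1), since $\rn \sim \|\cdot\|$ the weak-$^*$-topology on $X^*$ is unchanged, so the identity map $(X^*,\|\cdot\|^*) \to (X^*,\rn^*)$ is a weak-$^*$-homeomorphic MOS isomorphism onto a dual operator system. For (1) $\Rightarrow$ (4), Proposition \ref{prop:reg}(b) says $\mu_{X^*}$ is a complete embedding, so $\|\cdot\|^\rd := \|\mu_{X^*}^{(\infty)}(\cdot)\|$ is equivalent to and dominated by the original dual matrix norm; combined with Proposition \ref{prop:reg}(a) and Lemma \ref{lem:image-weak-st-cont-bdd-below}, $\mu_{X^*}$ gives a weak-$^*$-continuous completely positive complete isometry of $(X^*,\|\cdot\|^\rd)$ onto the weak-$^*$-closed subspace $X^\rd$ of the dual operator system $N(X^*)$. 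Since $X$ is complete, $\|\cdot\|^\rd = \rn^*$ for an equivalent predual matrix norm $\rn$ on $X$ with $\rn \geq \|\cdot\|$, so $\|\cdot\|^\rd \in \CN_X^{\mathrm{sys}}$.

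For (2) $\Leftrightarrow$ (3), the forward direction is trivial, and (3) $\Rightarrow$ (2) is a Baire/open-mapping iteration: given $\overline{U_X} \supseteq B_r$, compression $P_n$ lets us place approximants inside a fixed $M_n(X)$, after which I inductively select $u_k, v_k \in M_n(X)^+$ with $\|u_k\|, \|v_k\| \leq 2^{-k+1}$ whose telescoping difference approximates $y$, and sum the two series by completeness of $M_n(X)$ and norm-closedness of $M_n(X)^+$ to produce $y = U - V$ with $\|U\|, \|V\|$ uniformly bounded. For (3) $\Rightarrow$ (1), if $\overline{U_X} \supseteq B_r$ and $f \in M_m(X^*)^\sa$, then approximation plus the triangle inequality give $\|\theta_f^{(n)}(y)\| \leq 2\|f\|^\rd$ for all self-adjoint $y$ with $\|y\| < r$; the isometric involution on $M_\infty(X)$ and $M_\infty(X^*)$ extends this to arbitrary $f$ and $y$ with a uniform loss, yielding $\|f\|_{X^*} \leq (8/r)\|f\|^\rd$ uniformly in $m$, so $\mu_{X^*}$ is bounded below and (1) follows from Proposition \ref{prop:reg}(b).

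The main obstacle is (1) $\Rightarrow$ (3), which I intend to settle by the matricial Hahn-Banach theorem of Effros-Winkler. I would first check that $U_X$ is closed, self-adjoint, real-balanced, and matrix convex --- matrix convexity is immediate from the SMOS axiom $\|\sum a_k^* x_k a_k\| \leq \|\sum a_k^* a_k\|\max\|x_k\|$, which guarantees that $\sum \alpha_i^*(u_i - v_i)\alpha_i$ is again a difference of two positive contractions whenever $\sum \alpha_i^*\alpha_i \leq I$. If $\overline{U_X}$ failed to contain $B_c$ (for $c$ the lower-bound constant of $\mu_{X^*}$ from (1)), there would be $y_0 \in M_n(X)^\sa$ with $\|y_0\| < c$ but $y_0 \notin \overline{U_X}$; matricial separation then supplies a self-adjoint $f \in M_p(X^*)$ with $\|\theta_f^{(m)}\| \leq 1$ on $\overline{U_X} \cap M_m(X)$ for every $m$ --- which forces $\|f\|^\rd \leq 1$ by taking one of $u, v$ to be zero --- while $\|\theta_f^{(n)}(y_0)\| > 1$. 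But (1) gives $\|f\|_{X^*} \leq c^{-1}\|f\|^\rd \leq c^{-1}$, leading to the contradictory bound $\|\theta_f^{(n)}(y_0)\| \leq c^{-1}\|y_0\| < 1$.

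For part (b), that $\|\cdot\|^\rd \in \CN_X^{\mathrm{sys}}$ was already shown in the proof of (1) $\Rightarrow$ (4). For maximality, let $\rn^* \in \CN_X^{\mathrm{sys}}$; the hypothesis $\rn \geq \|\cdot\|$ gives $\rn^* \leq \|\cdot\|^*$, so every $\rn^*$-complete contraction $X^* \to M_n(\BC)$ is automatically a $\|\cdot\|^*$-complete contraction, and $\rn^* \sim \|\cdot\|^*$ ensures that the two norms induce the same weak-$^*$-topology on $X^*$. Hence $\WQ^{(X^*,\rn^*)}_n \subseteq \WQ^{X^*}_n$, which forces $\|\mu_{(X^*,\rn^*)}^{(m)}(f)\| \leq \|\mu_{X^*}^{(m)}(f)\| = \|f\|^\rd$ for all $f \in M_m(X^*)$; since $(X^*,\rn^*)$ is a dual operator system, Proposition \ref{prop:reg}(b) identifies the left-hand side with $\rn^*(f)$, yielding $\rn^* \leq \|\cdot\|^\rd$.
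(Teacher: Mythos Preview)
Your proposal is correct, and for part (b) and for the implications $(4)\Rightarrow(1)$, $(2)\Rightarrow(3)$, $(3)\Rightarrow(1)$ in part (a) it is essentially the same as the paper's argument (the paper packages $(3)\Rightarrow(1)$ inside its $(3)\Rightarrow(4)$ step, with the same decomposition-into-four-positives estimate you use, arriving at the constant $\gamma/4$).

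The genuine difference is in the direction $(1)\Rightarrow(2)/(3)$. The paper does \emph{not} use matricial Hahn--Banach separation. Instead it argues concretely: given a weak-$^*$-homeomorphic MOS isomorphism $\Gamma:X^*\to\CL(\KH)$, it passes to the induced predual $Y\subseteq \Gamma(X^*)^*$ (which is MOS-isomorphic to $X$), and for a self-adjoint contraction $y\in M_n(Y)$ it extends $\theta_y$ to a weak-$^*$-continuous self-adjoint map $\CL(\KH)\to M_n(\BC)$ of cb-norm at most $3$ via Lemma~\ref{lem:ws-cont-ext}, then decomposes that extension as a difference of two weak-$^*$-continuous completely positive maps of cb-norm at most $3$ via Lemma~\ref{lem:decomp}, and restricts back. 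This yields $(2)$ directly with an explicit constant, so the paper never needs your separate open-mapping iteration for $(3)\Rightarrow(2)$; it simply closes the cycle $(1)\Rightarrow(2)\Rightarrow(3)\Rightarrow(4)\Rightarrow(1)$.

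Your Effros--Winkler route is more intrinsic---it never leaves $X$ and avoids the extension/decomposition Lemmas~\ref{lem:decomp} and~\ref{lem:ws-cont-ext}---but it imports an external separation theorem and one must be slightly careful that the standard Effros--Winkler statement yields an \emph{order} inequality $\theta_f^{(m)}(v)\le I$ rather than a norm bound; you recover the norm bound only because $U_X=-U_X$. The paper's route, by contrast, is self-contained within the lemmas already established and gives the explicit neighborhood radius, at the cost of passing through a concrete $\CL(\KH)$ model.
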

\begin{proof}
(a) $(1)\Rightarrow (2)$. 
Let $\Gamma:X^* \to \CL(\KH)$ be a map such that $\Gamma:X^* \to \Gamma(X^*)$ a weak-$^*$-homeomorphic MOS isomorphism. 
Set 
$$Y := \{\omega|_{\Gamma(X^*)}: \omega\in \CL(\KH)_*\},$$ 
and equip it with the MOS structure induced from $Y\subseteq \Gamma(X^*)^*$. 
Then $Y^*$ can be identified with the MOS $\Gamma(X^*)$ (by Corollary \ref{cor:reform-CP}). 
Moreover, as $\Gamma$ is also a weak-$^*$-homeomorphism, it induces a MOS isomorphism from $Y$ onto $X$. 
Thus, $U_X$ contains a zero neighborhood of $M_\infty(X)^\sa$ if and only if $U_Y$ contains a zero neighborhood of $M_\infty(Y)^\sa$. 

Fix $n\in \BN$ and $y\in M_n(Y)^\sa$ with $\|y\| \leq 1$. 
Then $\theta_y$ is a weak-$^*$-continuous complete contraction from $Y^* = \Gamma(X^*)\subseteq \CL(\KH)$ to $M_n(\BC)$. 
It follows from Lemma \ref{lem:ws-cont-ext} that $\theta_y$ extends to a self-adjoint weak-$^*$-continuous map $\bar \theta_y: \CL(\KH)\to M_n(\BC)$ with $\|\bar \theta_y\|_\cb \leq 3$.
By Lemma \ref{lem:decomp}, there exist weak-$^*$-continuous completely positive map $\psi_1, \psi_2$ from $\CL(\KH)$ to $M_n(\BC)$ such that $\|\psi_1\|_\cb, \|\psi_2\|_\cb \leq 3$ and $\bar \theta_y = \psi_1 - \psi_2$. 
Now, for $i\in \{1,2\}$, we consider $y_i\in M_\infty(Y)^+$ to be the element satisfying $\theta_{y_i} = \psi_i|_{Y^*}$ (see Lemma \ref{lem:weak-st-cont}). 
In this case, we have  $\|y_1\|, \|y_2\| \leq 3$ and $y = y_1-y_2$. 
	
	\noindent
	$(2)\Rightarrow (3)$. 
	This is obvious. 
	
	\noindent
	$(3)\Rightarrow (4)$. 
	By the hypothesis, there is $\gamma > 0$ with 
	\begin{equation}\label{eqt:weak-bdd-decomp}
	B_{M_\infty(X)}\subseteq \overline{\big\{(u-v)+ \mathrm{i}(x-y): u,v,x,y \in \frac{1}{\gamma}B_{M_\infty(X)}^+\big\}}. 	
	\end{equation}
	Hence, Proposition \ref{prop:dual-sp-MOS} tells us that $X^*$ is a dual MOS, and we may apply Proposition  \ref{prop:reg}(a) to ensure that $\mu_{X^*}$ is a weak-$^*$-continuous completely order monomorphic complete contraction. 
	We claim that $\mu_{X^*}$ is a complete embedding. 
	
	In fact, consider arbitrary $m\in \mathbb{N}$ and $f\in M_m(X^*)$ with $\|f\| = 1$. 
	For every $\epsilon \in(0,1/2)$, one can find $n_0 \in \BN$ and $s\in M_{n_0}(X)$ with $\|s\|=1$ such that $\big\|\theta_f^{(n_0)}(s)\big\| >  1 - \epsilon$.  
	Relation \eqref{eqt:weak-bdd-decomp} gives $n_1\geq n_0$ and $u_1, u_2, u_3, u_4\in M_{n_1}(X)^+$ satisfying $\|u_i\| \leq 1/\gamma$ ($i=1,2,3,4$)  and $\|s - (u_1 - u_2) + \mathrm{i}(u_3-u_4)\| < \epsilon$. 
	Thus, $\big\|\theta_f^{(n_0)}\big(s - (u_1 - u_2) + \mathrm{i}(u_3-u_4)\big)\big\| < \epsilon$, and so, one can find $v_\epsilon\in B_{M_{n_1}(X)}^+$ with 
	\begin{equation}\label{eqt:f-v}
	\big\|\theta_f^{(n_1)}(v_\epsilon)\big\| > (1 - 2\epsilon)\gamma/4.
	\end{equation}
	Obviously,  $\theta_{v_\epsilon}\in \WQ^{X^*}_{n_1}$. 
	Moreover, as $\theta_{v_\epsilon}^{(m)}(f) = \theta_f^{(n_1)}(v_\epsilon)$, we know that 
	$$\sup \{\|\varphi^{(m)}(f)\|: \varphi\in \WQ^{X^*}_n; n\in \BN\} \geq \gamma/4.$$ 
	In other words, $\big\|\mu_{X^*}^{(m)}(f)\big\| \geq \gamma/4$. 
	As $m$ and $f$ are arbitrarily chosen, we know that $\mu_{X^*}$ is a complete embedding. 
	
	Now, Proposition  \ref{prop:reg}(c) implies that $X^\rd$ is a weak-$^*$-closed subspace of $N(X^*)$, and the MOS isomorphism  $\mu_{X^*}$ is a weak-$^*$-homeomorphism from $X^*$ onto $X^\rd$. 
	Let 
	$$X_0:= N(X^*)_*/\{\omega \in N(X^*)_*: \omega(z) = 0 \text{ for each } z\in X^\rd\}.$$ 
	Then $\mu_{X^*}$ produces a bijective completely bounded below complete contraction $\Phi: X_0\to X$. 
	Hence, if $\lambda$ is the matrix norm on $X$ induced from $\Phi$, then $\lambda^* \in \CN_X$. 
	Furthermore, as $\mu_{X^*}$ is a MOS isomorphism from $X^*$ to $X^\rd$, we know that $\lambda^*\in \CN_X^\mathrm{sys}$. 
	
	\noindent
	$(4)\Rightarrow (1)$. 
	Pick any $\lambda^*\in \CN_X^\mathrm{sys}$ and set $Z$ to be dual operator space $(X^*,\lambda^*)$ (equipped with the dual matrix cone). 
	It follows that there is a weak-$^*$-continuous completely positive complete isometry $\Gamma:Z\to \CL(\KH)$ for some Hilbert space $\KH$. 
	Thus, $\Gamma: X^*\to \CL(\KH)$ will be a weak-$^*$-continuous complete positive complete embedding, and $X^*$ is a dual quasi-operator system (by Proposition  \ref{prop:reg}(b)).

	\smnoind
	(b) It follows from parts (a), (b) and (c) of Proposition  \ref{prop:reg} that $\mu_{X^*}:X^* \to X^\rd$ is a weak-$^*$-homeomorphic MOS isomorphism. 
	Hence, the matrix norm $\|\cdot\|^\rd$, which coincides with the matrix norm on $X^*$ induced from $\mu_{X^*}$, belongs to $\CN_X^\mathrm{sys}$. 
	Suppose that $\rn^*\in \CN_X^\mathrm{sys}$. 
	Let $V$ be the dual operator system $(X^*, \rn^*)$, and $\Psi: (X,\rn)\to (X, \|\cdot\|)$ be the identity map. 
	Then $\Psi$ is both a complete contraction and a complete embedding (by the definition of $\CN_X$). 
	Thus, $\Psi^*: X^* \to V$ is a weak-$^*$-continuous completely order monomorphic complete contraction. 
	Proposition  \ref{prop:reg}(d) will then gives a completely positive complete contraction
	$\overline{\Psi^*}: X^\rd\to V$ with $\Psi^* = \overline{\Psi^*}\circ \mu_{X^*}$. 
	In particular,  $\|\cdot\|^\rd$ larger than $\lambda^*$.   
\end{proof}

\medskip

Theorem \ref{thm:quasi-oper-sys}(a) tells us that if $X$ is a \textcolor{magenta}{complete} SMOS, then $X^*$ is a dual quasi-operator system if and only if the ordered normed space $M_\infty(X)^\sa$ satisfies a stronger version of the ``bounded decomposition property'' in the sense of \cite{Bon} (\textcolor{magenta}{see \cite[Lemma 1]{Bon}}). 
However, since the term ``bounded decomposition property'' has many different meaning even within the subject of ordered normed space (see e.g. \cite[p.44]{AE}), we will not use this term here.


\medskip
\begin{eg}
Let $\KH$ be a Hilbert space. 
	It follows from \cite[Corollary 2.8]{Ng-MOS} that $\CL(\KH)^*$ is not an operator system. 
	However, $\CL(\KH)^*$ is a dual quasi-operator system (because $\CL(\KH)$ satisfies Condition (2) of Theorem \ref{thm:quasi-oper-sys}(a)). 
	Consider $t = [t_{i,j}]\in M_m(\CL(\KH)^*)$. 
	As in Theorem \ref{thm:quasi-oper-sys}(b), 
	$$\|t\|^\rd = \sup \big\{\big\|\big[t_{i,j}(x_{k,l})\big]\big\|_{M_{mn}(\BC)}: [x_{k,l}]\in M_n(\CL(\KH))^+\cap B_{M_n(\CL(\KH))}; n\in \BN\big\}.$$ 

	Suppose that $t\in M_m(\CL(\KH)^*)^+$.
	Then $\theta_t$ is a completely positive map from $\CL(\KH)$ to $M_m(\BC)$, and 
	$$\|t\|_{M_m(\CL(\KH)^*)} = \|\theta_t\| = \big\|\theta_t(1)\big\| = \big\|[t_{i,j}(1)]\big\|_{M_m(\BC)} \leq  \|t\|^\rd \leq \|t\|_{M_m(\CL(\KH)^*)},$$
which means that $\|t\|^\rd = \|t\|_{M_m(\CL(\KH)^*)}$. 

We may identify  $\CL(\KH)^\rd$ with a weak-$^*$-closed subspace of some $\CL(\KK)$. 
For any $s,t\in (\CL(\KH)^\rd)^+\setminus \{0\}$, the images of $s$ and $t$ in  $\CL(\KK)^+$ cannot have orthogonal ranges; otherwise, one has the contradiction that 
$$\|s+t\|^\rd = \max\{\|s\|^\rd,\|t\|^\rd \}\neq \|s\|^\rd + \|t\|^\rd = \|s\|_{\CL(\KH)^*} + \|t\|_{\CL(\KH)^*} = \|s+t\|_{\CL(\KH)^*}.$$ 
\end{eg}


\medskip

\begin{defn}\label{defn:dualizable}
	A quasi-operator system $T$ is said to be \emph{dualizable} if the dual SMOS $T^*$ is a dual quasi-operator system. 
\end{defn}

\medskip

\textcolor{magenta}{Note that when $T$ is dualizable, it is complete (because of the definition of the dual SMOS.)}

\medskip

When $T$ is dualizable, we may sometimes ignore the map $\mu_{T^*}$ and identify $T^\rd$ with the dual matrix ordered space $T^*$, equipped with a new dual matrix norm.



\medskip 

The following are some duality results concerning dualizable quasi-operator systems.

\medskip

\begin{prop}\label{prop:predual}
(a) If $X$ is a quasi-operator system (respectively, operator system), then $X^{**}$ is a dual quasi-operator system (respectively, dual operator system). 

\smnoind
(b) If $T$ is a dualizable quasi-operator system, then $T^*$ is a dualizable dual quasi-operator system. 
\end{prop}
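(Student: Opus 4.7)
The plan is to prove part (a) and then deduce part (b) as a direct consequence. For part (a), let $\Psi: X \to \CL(\KH)$ be a MOS isomorphism witnessing that $X$ is a quasi-operator system (respectively, operator system), rescaled so that $\Psi$ is a complete contraction, and pick a faithful normal $^*$-representation $\pi: \CL(\KH)^{**} \to \CL(\KK)$ of the von Neumann algebra $\CL(\KH)^{**}$. The idea is to show that $\Psi^\bullet := \pi \circ \Psi^{**}: X^{**} \to \CL(\KK)$ is a weak-$^*$-continuous completely positive complete embedding (respectively, complete isometry) and that $X^{**}$ is a dual MOS with predual $X^*$; then Proposition \ref{prop:reg}(b) will give the conclusion.

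The weak-$^*$-continuity of $\Psi^\bullet$ is clear since $\Psi^{**}$ is a bidual and $\pi$ is normal; the complete embedding (respectively, complete isometry) property of $\Psi^{**}$ follows from the canonical identification $M_n(X)^{**} = M_n(X^{**})$ combined with the standard Banach-space fact that the bidual of a bounded-below map remains bounded below with the same constants, after which composition with the complete isometry $\pi$ gives the same for $\Psi^\bullet$. For complete positivity, a direct computation from the defining formulas gives $\theta_{\Psi^{*(m)}(\omega)} = \theta_\omega \circ \Psi$ for $\omega \in M_m(\CL(\KH)^*)$, so that $\Psi^*$ is completely positive; the analogous identity $\theta_{\Psi^{**(n)}(z)} = \theta_z \circ \Psi^*$ then shows $\Psi^{**}$ is completely positive, and hence so is $\Psi^\bullet$ since $\pi$ is completely positive.

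To verify $X^{**}$ is a dual MOS with predual $X^*$, observe that $\upsilon_{X^{**}}: X^{**} \to (X^*)^*$ is the identity, and the matrix cone $M_n(X^{**})^+$ equals the intersection, over $m \in \BN$ and $f \in M_m(X^*)^+$, of the sets $\{z \in M_n(X^{**})^\sa : [z_{i,j}(f_{k,l})] \in M_{mn}(\BC)^+\}$, each of which is $\sigma(M_n(X^{**}), M_n(X^*))$-closed since each $z_{i,j}(f_{k,l})$ depends weak-$^*$-continuously on $z$; and this cone is proper because $\Psi^\bullet$ is injective and completely positive into the proper matrix cone of $\CL(\KK)$. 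Proposition \ref{prop:reg}(b) then yields (a). Part (b) is immediate: a dualizable quasi-operator system $T$ is in particular a quasi-operator system, so part (a) applied to $T$ gives that $(T^*)^* = T^{**}$ is a dual quasi-operator system, which combined with the dualizability of $T$ is precisely the statement that $T^*$ is a dualizable dual quasi-operator system. The main subtlety is the identification $M_n(X)^{**} = M_n(X^{**})$ needed to transport the complete embedding property through the bidual; the remaining ingredients are routine.
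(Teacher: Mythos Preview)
Your proposal is correct and follows essentially the same route as the paper: both use the bidual $\Psi^{**}$ of an embedding $\Psi:X\to\CL(\KH)$ to produce a weak-$^*$-continuous completely positive complete embedding (respectively, complete isometry) of $X^{**}$ into a von Neumann algebra, and then invoke Proposition~\ref{prop:reg}(b). The only cosmetic difference is packaging: you compose $\Psi^{**}$ with a faithful normal representation $\pi:\CL(\KH)^{**}\to\CL(\KK)$ and apply Proposition~\ref{prop:reg}(b) directly to $X^{**}$ after checking it is a dual MOS, whereas the paper stays inside $\CL(\KH)^{**}$, uses Lemma~\ref{lem:image-weak-st-cont-bdd-below} to identify the weak-$^*$-closed image $T=\Psi^{**}(X^{**})$, and then invokes Corollary~\ref{cor:reg}(a) (itself a consequence of Proposition~\ref{prop:reg}(b)) to handle the possibility that the image cone $(\Psi^{**})^{(\infty)}(M_\infty(X^{**})^+)$ is strictly smaller than $M_\infty(T)^+$. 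Your direct verification that $M_\infty(X^{**})^+$ is proper and weak-$^*$-closed is equivalent to what the paper does implicitly, and your deduction of part~(b) from part~(a) is identical to the paper's.
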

\begin{proof}
(a) Let $\Psi: X\to \CL(\KH)$ be a map with $\Psi: X\to \Psi(X)$ being a (respectively, completely isometric) MOS isomorphism. 
Then $\Psi^{**}: X^{**}\to \CL(\KH)^{**}$ is a weak-$^*$-continuous completely positive complete embedding (respectively, complete isometry).
By Lemma \ref{lem:image-weak-st-cont-bdd-below}, $T := \Psi^{**}( X^{**})$ is a weak-$^*$-closed subspace of $\CL(\KH)^{**}$, and $\Psi^{**}$ is a weak-$^*$-homeomorphism from $X^{**}$ onto $T$. 
Since the matrix cone $M_\infty(X^{**})^+$ is weak-$^*$-closed in $M_\infty(X^{**})$, we know that the subcone $(\Psi^{**})^{(\infty)}(M_\infty(X^{**})^+)$ of 
$$M_\infty(T)^+:= M_\infty(T)\cap M_\infty(\CL(\KH)^{**})^+$$ 
is again weak-$^*$-closed in $M_\infty(T)^\sa$. 
As $T$ is a dual operator system under the matrix cone $M_\infty(T)^+$, it
follows from Corollary \ref{cor:reg}(a) that $T$ is a dual operator system under the matrix cone $(\Psi^{**})^{(\infty)}(M_\infty(X^{**})^+)$. 
From this, we see that $X^{**}$ is a dual quasi-operator system (respectively, dual operator system). 

\smnoind
(b) This follows directly from part (a).
\end{proof}

\medskip

In the proof of part (a) above, it may not be true that $(\Psi^{**})^{(\infty)}(M_\infty(X^{**})^+) = M_\infty(T)^+$, in general.
In fact, even if $X\subseteq \CL(\KH)$, there is no guarantee that the matrix cone on $X^{**}$ induced from $\CL(\KH)^{**}$ is independent of the embedding $X\subseteq \CL(\KH)$. 
However, if $(X^1, \iota_X)$ is the unitization of $X$ (see Proposition \ref{prop:unital}) and $\Phi: X^1\to  \CL(\KH)$ is a unital complete embedding, then $\Phi^{**}\circ \iota_X^{**}: X^{**}\to \CL(\KH)^{**}$ is  completely order monomorphic.

\medskip

\begin{rem}
	Let $D$ be a dual SMOS with a predual $D_\#$. 
	By Proposition \ref{prop:predual}(a) and Theorem \ref{thm:quasi-oper-sys}(a), $U_D$ contains a zero neighborhood of $M_\infty(D)^\sa$ if and only if $D_\#$ is a quasi-operator system.
\end{rem}
\medskip


\begin{cor}\label{cor:bidual-quasi-os}
Let $X$ be a  SMOS. 

\smnoind
(a) The following statements are equivalent. 

\begin{enumerate}
	\item $X$ is a quasi-operator system (respectively, an operator system). 
	
	\item $X^{**}$ is a dual quasi-operator system (respectively, dual operator system). 

	\item $X^{**}$ is a quasi-operator system (respectively, operator system). 
	
	\item $U_{X^*}$ is a zero neighborhood of $M_\infty(X^*)^\sa$.
\end{enumerate}

\smnoind
(b) If \textcolor{magenta}{$X$ is complete}, and  both $X$ and $X^*$ are quasi-operator systems, then $X^*$ is a dual quasi-operator system. 
%
\end{cor}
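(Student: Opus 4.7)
For (a), I would verify the chain of equivalences as follows. The implication (1) $\Rightarrow$ (2) is exactly Proposition \ref{prop:predual}(a). The implication (2) $\Rightarrow$ (3) is immediate, since a dual quasi-operator system (respectively, dual operator system) is by definition a quasi-operator system (respectively, operator system). For (3) $\Rightarrow$ (1), I would take a MOS isomorphism $\Lambda : X^{**} \to T \subseteq \CL(\KH)$ (a complete embedding, respectively complete isometry, and completely order monomorphic) and compose with $\kappa_X : X \to X^{**}$, which is a completely isometric complete order monomorphism by Lemma \ref{lem:bidual}; the resulting composition $\Lambda \circ \kappa_X$ exhibits $X$ as a (quasi-)operator system. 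The equivalence (2) $\Leftrightarrow$ (4) would follow from Theorem \ref{thm:quasi-oper-sys}(a) applied to the complete SMOS $X^*$: the SMOS $(X^*)^* = X^{**}$ is a dual quasi-operator system if and only if $U_{X^*}$ is a zero neighborhood of $M_\infty(X^*)^\sa$.

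For (b), the strategy would be to apply part (a) twice: since $X$ is a quasi-operator system, part (a) yields that $U_{X^*}$ is a zero neighborhood of $M_\infty(X^*)^\sa$; and since $X^*$ is a quasi-operator system (and complete as a dual), part (a) applied to $X^*$ gives that $U_{X^{**}}$ is a zero neighborhood of $M_\infty(X^{**})^\sa$. The conclusion that $X^*$ is a dual quasi-operator system would then amount, by Theorem \ref{thm:quasi-oper-sys}(a), to showing that $U_X$ is a zero neighborhood of $M_\infty(X)^\sa$, or equivalently by Proposition \ref{prop:reg}(b) that $\mu_{X^*} : X^* \to N(X^*)$ is a complete embedding. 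To produce the required lower bound on $\mu_{X^*}$, I would combine the MOS embedding $\Phi : X^* \to \CL(\KH)$ coming from the quasi-operator system assumption on $X^*$ with Lemma \ref{lem:norm-pos-cont} applied to $\CL(\KH)$; together with the identification $\WQ^{X^*}_n \leftrightarrow B_{M_n(X)^+}$ from Lemma \ref{lem:weak-st-cont} (which gives $\|\mu_{X^*}^{(m)}(f)\| = \sup_{y \in B_{M_n(X)^+}} \|\theta_y^{(m)}(f)\|$), this would provide the estimate $\|f\|_{M_m(X^*)} \leq (\|\Phi\|_\cb/c_0) \cdot \sup_{\gamma \in B_{M_n(X^{**})^+}} \|\theta_\gamma^{(m)}(f)\|$, where $c_0$ is the lower bound coming from $\Phi$ being a complete embedding.

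The main obstacle will be to bridge the two suprema, namely to show that $\sup_{\gamma \in B_{M_n(X^{**})^+}} \|\theta_\gamma^{(m)}(f)\|$ is bounded by a uniform multiple of $\|f\|^\rd$. I would reduce this to a norm-controlled weak-$^*$-density: $B_{M_n(X^{**})^+}$ should lie in the weak-$^*$-closure of $rB_{M_n(X)^+}$ in $M_n(X^{**})$ for some uniform $r > 0$. The unconstrained weak-$^*$-density, that the weak-$^*$-closure of $M_n(X)^+$ in $M_n(X^{**})$ equals $M_n(X^{**})^+$, follows from the bipolar theorem; and entrywise weak-$^*$-convergence $y_\alpha \to \gamma$ translates into norm-convergence $\theta_{y_\alpha}^{(m)}(f) \to \theta_\gamma^{(m)}(f)$ in the finite-dimensional space $M_{nm}(\BC)$, which transfers the estimate. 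The uniform norm control would then be extracted via Hahn--Banach separation: a hypothetical separating functional $h \in M_n(X^*)^\sa$ admits, thanks to $U_{X^*}$ being a zero neighborhood (from the first application of part (a)), a bounded decomposition $h = h^+ - h^-$ into positive parts whose pairings against $\gamma \in M_n(X^{**})^+$ are non-negative; together with the separation inequality and with the aid of $U_{X^{**}}$ being a zero neighborhood to control $\|h\|$ by its action on the positive ball, this would yield a contradiction for suitably chosen $r$.
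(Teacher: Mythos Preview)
Your argument for part (a) is correct and matches the paper's proof essentially line for line: $(1)\Rightarrow(2)$ via Proposition~\ref{prop:predual}(a), $(2)\Rightarrow(3)$ trivially, $(3)\Rightarrow(1)$ via Lemma~\ref{lem:bidual} composed with the embedding of $X^{**}$, and $(2)\Leftrightarrow(4)$ by applying Theorem~\ref{thm:quasi-oper-sys}(a) to the complete SMOS $X^*$.

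For part (b), however, your route diverges from the paper's and the sketch has a genuine gap. The paper does \emph{not} attempt to compare $\|f\|^{\rd}$ with $\sup_{\gamma\in B_{M_n(X^{**})^+}}\|\theta_\gamma^{(m)}(f)\|$ directly. Instead it invokes \cite[Corollary~5]{Han}: since both $X$ and $X^*$ are quasi-operator systems, $X$ is MOS isomorphic to a \emph{matrix regular} operator space $Y$, and matrix regularity immediately gives that $2U_Y$ contains the open unit ball of $M_\infty(Y)^{\sa}$; hence $U_X$ is a zero neighbourhood and Theorem~\ref{thm:quasi-oper-sys}(a) finishes. The substance of the argument is thus outsourced to Han's result.

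Your direct approach correctly identifies that the problem reduces to a norm-controlled weak-$^*$-density statement, namely $B_{M_n(X^{**})^+}\subseteq \overline{rB_{M_n(X)^+}}^{\,w^*}$ uniformly in $n$. But the Hahn--Banach outline you give does not close. Writing a separating $h\in M_n(X^*)^{\sa}$ as $h^+-h^-$ via $U_{X^*}$ only yields $\gamma(h)\le\gamma(h^+)$, and the scalar pairings $\gamma(h^+)$, $h(y)$ are tied to the matrix norms $\|\gamma\|_{M_n(X^{**})}$, $\|h\|_{M_n(X^*)}$ by constants depending on $n$ (the pairing $M_n(X^{**})\times M_n(X^*)\to\BC$ is a trace-type pairing, not the operator-space duality). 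Nothing in your use of $U_{X^{**}}$ removes this $n$-dependence, so the ``suitably chosen $r$'' cannot be made uniform. In fact, the statement you are trying to establish---that the suprema over $B_{M_n(X)^+}$ and over $B_{M_n(X^{**})^+}$ are uniformly comparable---is essentially equivalent to $U_X$ being a zero neighbourhood, which is precisely the conclusion; so the argument is circular unless an external input (such as Han's matrix regularity characterisation) is brought in.
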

\begin{proof}
(a) $(1)\Rightarrow (2)$
This follows from Proposition \ref{prop:predual}(a). 

\smnoind
$(2)\Rightarrow (3)$ This is obvious. 

\smnoind
$(3)\Rightarrow (1)$
This follows from Lemma \ref{lem:bidual}. 

\smnoind
$(2)\Leftrightarrow (4)$ This follows from Theorem \ref{thm:quasi-oper-sys}(a). 

\smnoind
(b) Since both $X$ and $X^*$ are quasi-operator system, it follows from \cite[Corollary 5]{Han} (note that operator systems in \cite{Han} are quasi-operator systems in this paper) that there is a MOS isomorphism from $X$ onto a matrix regular operator space $Y$. 
Since by the definition of matrix regularity, $2U_Y$ contains the open unit ball of $M_\infty(Y)^\sa$, we see that $U_X$ is a zero neighborhood of $M_\infty(X)^\sa$ and Theorem \ref{thm:quasi-oper-sys}(a) tells us that $X^*$ is a dual quasi-operator system. 
%
\end{proof}

\medskip

Note that in part (a) above, we do not assume that $X^*$ is a MOS.


\medskip

Let $S$ be a dualizable quasi-operator system. 
Then the bijection $\mu_{S^*}:S^* \to S^\rd$ is a MOS isomorphism. 
It follows that the weak-$^*$-homeomorphism $\mu_{S^*}^*: (S^\rd)^*\to S^{**}$ is also a MOS isomorphism and that the quasi-operator system $S^\rd$ is dualizable (see Proposition \ref{prop:predual}(b)). 
Hence, $\mu_{(S^\rd)^*}: (S^\rd)^* \to (S^\rd)^\rd$ is a weak-$^*$-homeomorphic MOS isomorphism. 
We call the map 
\begin{equation}\label{eqt:def-tau}
\tau_S:= \mu_{(S^\rd)^*}\circ (\mu_{S^*}^*)^{-1}: S^{**}\to (S^\rd)^\rd
\end{equation}
the \emph{canonical bijection}. 

\medskip

\begin{lem}\label{lem:quasi-oper-sys}
Let  $S$ be a dualizable quasi-operator system. 

\smnoind
(a) The canonical bijection $\tau_S:S^{**} \to (S^\rd)^\rd$ is a weak-$^*$-homeomorphic MOS isomorphism. 

\smnoind
(b) If $S$ is an operator system, then $\tau_S^{-1}: (S^\rd)^\rd\to S^{**}$ is a complete contraction.
\end{lem}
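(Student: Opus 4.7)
For part (a), the plan is to realise $\tau_S$ as a composition of weak-$^*$-homeomorphic MOS isomorphisms. Since $S$ is dualizable, $S^*$ is a dual quasi-operator system, so Proposition \ref{prop:reg}(c) gives that $\mu_{S^*}:S^*\to S^\rd$ is a weak-$^*$-homeomorphic MOS isomorphism. Its adjoint $\mu_{S^*}^*:(S^\rd)^*\to S^{**}$ is therefore a weak-$^*$-homeomorphic linear bijection, it is a complete embedding by operator-space duality, and it is completely order monomorphic because the cones $M_n(S^{**})^+$ and $M_n((S^\rd)^*)^+$ are each characterised by sending $\bigcup_m M_m(S^*)^+$ (respectively $\bigcup_m M_m(S^\rd)^+$) to positive matrices, and these two collections correspond under $\mu_{S^*}^{(m)}$. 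By Proposition \ref{prop:predual}(b), $S^\rd$ is itself dualizable, so a second application of Proposition \ref{prop:reg}(c) says $\mu_{(S^\rd)^*}:(S^\rd)^*\to (S^\rd)^\rd$ is also a weak-$^*$-homeomorphic MOS isomorphism. Thus $\tau_S = \mu_{(S^\rd)^*}\circ (\mu_{S^*}^*)^{-1}$ is one too, proving (a).

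For part (b), the strategy is to write both $\|z\|_{S^{**}}$ and $\|\tau_S(z)\|_{(S^\rd)^\rd}$ as suprema of $\|\theta_z^{(n)}(f)\|$ where $f$ varies over positive matrices in $\bigcup_n M_n(S^*)$, constrained to different unit balls, and then exploit that the $\|\cdot\|^\rd$-ball is larger. Fix $z\in M_m(S^{**})$ and let $g := (\mu_{S^*}^*)^{-1}(z)\in M_m((S^\rd)^*)$, so that $g\circ \mu_{S^*} = z$. Applying Theorem \ref{thm:quasi-oper-sys}(b) to $X = S^\rd$ (dualizable by Proposition \ref{prop:predual}(b)) yields
$$\|\tau_S(z)\|_{(S^\rd)^\rd} \ =\ \|\mu_{(S^\rd)^*}^{(m)}(g)\| \ =\ \sup\big\{\big\|\theta_g^{(n)}(y)\big\|: y\in M_n(S^\rd)^+,\ \|y\|_{S^\rd}\le 1,\ n\in \BN\big\}.$$
Since $\mu_{S^*}$ is a MOS isomorphism, each such $y$ uniquely equals $\mu_{S^*}^{(n)}(f)$ for some $f\in M_n(S^*)^+$, with $\|y\|_{S^\rd} = \|f\|^\rd$ and $\theta_g^{(n)}(y) = \theta_z^{(n)}(f)$, so the right-hand side rewrites as
$$\sup\big\{\big\|\theta_z^{(n)}(f)\big\|: f\in M_n(S^*)^+,\ \|f\|^\rd \le 1,\ n\in \BN\big\}.$$

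On the other hand, Lemma \ref{lem:norm-pos-cont}, applicable because $S$ is an operator system, gives the same formula for $\|z\|_{S^{**}}$ but with the constraint $\|f\|_{S^*}\le 1$ in place of $\|f\|^\rd\le 1$. Since $\|\cdot\|^\rd\in \CN_S^\mathrm{sys}\subseteq \CN_S$ by Theorem \ref{thm:quasi-oper-sys}(b), one has $\|\cdot\|^\rd \le \|\cdot\|_{S^*}$, so the first supremum is taken over a larger set, giving $\|\tau_S(z)\|_{(S^\rd)^\rd}\ge \|z\|_{S^{**}}$ at every matrix level $m$, which is exactly the complete contractivity of $\tau_S^{-1}$. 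The main obstacle is the bookkeeping needed to align the three natural identifications (of $(S^\rd)^*$ with $S^{**}$ via $\mu_{S^*}^*$, of $M_n(S^\rd)^+$ with $M_n(S^*)^+$ via $\mu_{S^*}^{(n)}$, and of $(S^\rd)^\rd$ with $(S^\rd)^*$ via $\mu_{(S^\rd)^*}$) so that the two norm formulas live over the common parameter set $\bigcup_n M_n(S^*)^+$; once that is done, the inequality $\|\cdot\|^\rd\le \|\cdot\|_{S^*}$ closes the argument immediately.
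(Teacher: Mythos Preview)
Your proof is correct and follows essentially the same route as the paper. For part (a) you reconstruct exactly the discussion preceding the lemma (that $\mu_{S^*}$, hence $\mu_{S^*}^*$, and $\mu_{(S^\rd)^*}$ are all weak-$^*$-homeomorphic MOS isomorphisms), and for part (b) both you and the paper compute $\|\tau_S(z)\|_{(S^\rd)^\rd}$ as a supremum over $M_n(S^\rd)^+\cap B_{M_n(S^\rd)}$ (you cite Theorem~\ref{thm:quasi-oper-sys}(b); the paper rederives the same formula as \eqref{eqt:bidual-norm} from the definition of $\mu_{(S^\rd)^*}$ and Lemma~\ref{lem:weak-st-cont}), invoke Lemma~\ref{lem:norm-pos-cont} for $\|z\|_{S^{**}}$, and compare the two via the inclusion $\mu_{S^*}^{(n)}\big(M_n(S^*)^+\cap B_{M_n(S^*)}\big)\subseteq M_n(S^\rd)^+\cap B_{M_n(S^\rd)}$, which is precisely your inequality $\|\cdot\|^\rd\le\|\cdot\|_{S^*}$ combined with the complete order monomorphism property of $\mu_{S^*}$.
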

\begin{proof}
By the discussion preceding this lemma, we only need to verify part (b).
Suppose that $k\in \BN$ and $x\in M_k((S^\rd)^*)$. 
The definition of $\mu_{(S^\rd)^*}$ as well as  Lemma \ref{lem:weak-st-cont} imply that 
\begin{align}\label{eqt:bidual-norm}
	\big\|(\mu_{(S^\rd)^*})^{(k)}(x)\big\|_{M_k((S^\rd)^\rd)} 
	&= \sup \{\|\psi^{(k)}(x)\|: \psi \in \WQ^{(S^\rd)^*}_n; n\in \BN \}
	\nonumber\\
	& = \sup \{\|\theta_x^{(n)}(\varphi))\|: \varphi \in M_n(S^\rd)^+\cap B_{M_n(S^\rd)}; n\in \BN \}.
\end{align}
On the other hand, Lemma \ref{lem:norm-pos-cont} tells us that
\begin{align}\label{eqt:norm-bidual}
	\big\|(\mu_{S^*}^*)^{(k)}(x)\big\|_{M_k(S^{**})} 
	& = \sup \big\{ \big\|\theta_{(\mu_{S^*}^*)^{(k)}(x)}^{(n)}(\phi)\big\|: \phi\in M_n(S^*)^+\cap B_{M_n(S^*)}; n\in \BN \big\}
\end{align}
Since $\mu_{S^*}$ is a completely order monomorphic complete contraction (see Proposition  \ref{prop:reg}(a)), we know that $\mu_{S^*}^{(n)}\big(M_n(S^*)^+\cap B_{M_n(S^*)}\big) \subseteq  M_n(S^\rd)^+\cap B_{M_n(S^\rd)}$. 
From these, we obtain required inequality
$$\big\|(\mu_{S^*}^*)^{(k)}(x)\big\|_{M_k(S^{**})}\leq \big\|(\mu_{(S^\rd)^*})^{(k)}(x)\big\|_{M_k((S^\rd)^\rd)}.$$ 
\end{proof}

\medskip

If $D$ and $E$ are dual operator systems, we use $\Morw(D,E)$ (respectively, $\Ccpw(D,E)$) to denote the set of weak-$^*$-continuous completely positive completely bounded maps (respectively, complete contractions) from $D$ to $E$.

\medskip

\begin{thm}\label{thm:dualization-functor}
Let $S$ and $T$ be dualizable operator systems. 

\smnoind
(a) Suppose that $\phi\in \Mor(S,T)$. 
There is  a unique $\phi^\rd\in \Morw(T^\rd, S^\rd)$ satisfying $\phi^\rd\circ \mu_{T^*} = \mu_{S^*}\circ \phi^*$. 
If, in addition, $\phi\in \Morc(S,T)$, then $\phi^\rd\in \Morc_w(T^\rd, S^\rd)$. 

\smnoind
(b) The assignment $\phi\mapsto \phi^\rd$ is a bijection from  $\Mor(S,T)$ onto $\Morw(T^\rd, S^\rd)$. 

\smnoind
(c) If $\phi:S\to T$ is a MOS isomorphism, then $\phi^\rd$ is a weak-$^*$-homeomorphic MOS isomorphism. 
Conversely, any weak-$^*$-continuous MOS isomorphism $\chi: T^\rd\to S^\rd$ is of the form $\phi^\rd$ for a MOS isomorphism $\phi$. 

\smnoind
(d) For a dualizable dual operator system $D$, there is a dualizable operator system $S$ and a weak-$^*$-homeomorphic MOS isomorphism $\Phi$ from $S^\rd$ onto $D$.  

\smnoind
(e) Suppose that both  $\tau_S\circ \kappa_S$ and $\tau_T\circ \kappa_T$ are completely isometric, and $\chi\in \Morc_w(T^\rd;S^\rd)$. 
There exists $\phi\in \Morc(S;T)$ satisfying $\chi = \phi^\rd$. 
If, in addition,  $\chi$ is a completely isometric MOS isomorphism, then $\phi$ is a completely isometric MOS isomorphism. 
\end{thm}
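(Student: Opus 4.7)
The strategy is to transport $\phi^{*}$ through the weak-$^*$-homeomorphic MOS isomorphisms $\mu_{T^{*}}: T^{*}\to T^{\rd}$ and $\mu_{S^{*}}: S^{*}\to S^{\rd}$ supplied by Proposition \ref{prop:reg}(c) (applicable because $S, T$ are dualizable). Concretely, for part (a) I would set
\[
\phi^{\rd} \ := \ \mu_{S^{*}}\circ \phi^{*}\circ \mu_{T^{*}}^{-1}: T^{\rd}\to S^{\rd}.
\]
Weak-$^*$-continuity, complete positivity, and complete boundedness follow by composition, and uniqueness is immediate from the surjectivity of $\mu_{T^{*}}$. For the statement that $\phi\in \Morc(S,T)$ forces $\phi^{\rd}\in \Ccpw(T^{\rd},S^{\rd})$, the compositional estimate does not suffice since $\mu_{T^{*}}^{-1}$ need not be a complete contraction; instead I would apply Proposition \ref{prop:reg}(d) to the weak-$^*$-continuous CP complete contraction $\mu_{S^{*}}\circ \phi^{*}: T^{*}\to S^{\rd}$ and observe that the extension produced there must coincide with $\phi^{\rd}$ by uniqueness. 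For part (b), injectivity of $\phi\mapsto \phi^{\rd}$ follows from injectivity of $\mu_{S^{*}}$. For surjectivity, given $\psi\in \Morw(T^{\rd},S^{\rd})$, the composite $\mu_{S^{*}}^{-1}\circ \psi\circ \mu_{T^{*}}: T^{*}\to S^{*}$ is weak-$^*$-continuous, hence is the adjoint of a unique $\phi\in \CL(S;T)$, and complete positivity of $\phi$ is recovered from that of $\phi^{*}$ using the characterization of $M_{\infty}(T)^{+}$ via evaluation against $M_{\infty}(T^{*})^{+}$, essentially as in \cite[Lemma 2.4(c)]{Ng-MOS}.

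Part (c) then follows from (a) and (b) together with the functoriality $(\psi\circ \phi)^{\rd}=\phi^{\rd}\circ \psi^{\rd}$, which is a direct verification from the defining relation: applied to a MOS isomorphism $\phi$ one gets $(\phi^{-1})^{\rd}=(\phi^{\rd})^{-1}$, while the converse is obtained by applying (b) to $\chi$ and $\chi^{-1}$ and checking the resulting $\phi$ is a MOS isomorphism. For part (d) I would use the remark following Proposition \ref{prop:predual}: because $D$ is a dualizable dual operator system, Theorem \ref{thm:quasi-oper-sys}(a) applied with $X=D$ forces $U_{D}$ to be a zero neighborhood of $M_{\infty}(D)^{\sa}$, so $D_{\#}$ is a quasi-operator system. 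Pick a MOS isomorphism $\Psi: D_{\#}\to S$ onto an operator system; its adjoint $\Psi^{*}: S^{*}\to (D_{\#})^{*}$ is a weak-$^*$-homeomorphic MOS isomorphism, and composition with $\upsilon_{D}: D\to (D_{\#})^{*}$ from Corollary \ref{cor:reform-CP} shows $S^{*}$ is MOS isomorphic to $D$. In particular $S$ is dualizable, and the map $\Phi := \upsilon_{D}^{-1}\circ \Psi^{*}\circ \mu_{S^{*}}^{-1}: S^{\rd}\to D$ is the desired weak-$^*$-homeomorphic MOS isomorphism.

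The crux, which I expect to be the main obstacle, is part (e), and it hinges on the compatibility identity
\[
(\phi^{\rd})^{\rd}\circ (\tau_{S}\circ \kappa_{S})\ =\ (\tau_{T}\circ \kappa_{T})\circ \phi.
\]
To derive this, I would dualize $\phi^{\rd}\circ \mu_{T^{*}}=\mu_{S^{*}}\circ \phi^{*}$ to get $(\phi^{\rd})^{*}=(\mu_{T^{*}}^{*})^{-1}\circ \phi^{**}\circ \mu_{S^{*}}^{*}$ (using that $\mu_{T^{*}}^{*}$ is invertible since $\mu_{T^{*}}$ is a MOS isomorphism), substitute into the defining relation $(\phi^{\rd})^{\rd}\circ \mu_{(S^{\rd})^{*}}=\mu_{(T^{\rd})^{*}}\circ (\phi^{\rd})^{*}$, invoke \eqref{eqt:def-tau} for $\tau_{S}$ and $\tau_{T}$, and finally compose with $\kappa_{S}$ using the naturality identity $\phi^{**}\circ \kappa_{S}=\kappa_{T}\circ \phi$. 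With the identity in hand, applying part (a) to $\chi=\phi^{\rd}\in \Morc_w(T^{\rd},S^{\rd})$ between the dualizable operator systems $T^{\rd}$ and $S^{\rd}$ (dualizable by Proposition \ref{prop:predual}(b)) shows $(\phi^{\rd})^{\rd}=\chi^{\rd}$ is a CP complete contraction. The hypothesis that $\tau_{S}\circ \kappa_{S}$ and $\tau_{T}\circ \kappa_{T}$ are completely isometric then forces $\phi$ to be a complete contraction, so $\phi\in \Morc(S,T)$. For the ``in addition'' clause, applying this to $\chi^{-1}$ produces $\psi\in \Morc(T,S)$ with $\psi^{\rd}=\chi^{-1}$; functoriality together with the uniqueness in part (a) force $\psi\circ \phi=\id_{S}$ and $\phi\circ \psi=\id_{T}$, making $\phi$ a completely isometric MOS isomorphism. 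The principal technical burden is the bookkeeping of the four families of canonical maps ($\mu$, $\tau$, $\kappa$, and their Banach-space duals) and checking weak-$^{*}$-compatibility of the various identifications at each step.
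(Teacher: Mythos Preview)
Your proposal is correct and tracks the paper's argument closely in parts (a), (c), (d) and (e); in particular, the compatibility identity $(\phi^{\rd})^{\rd}\circ\tau_S=\tau_T\circ\phi^{**}$ that you isolate for (e) is exactly what the paper uses (the paper derives it inside the proof of (b) and then reuses it in (c) and (e)).  The one genuine difference is in the surjectivity step of (b): the paper obtains $\phi\in\Mor(S,T)$ by passing to the bidual, using the identity above together with Lemma~\ref{lem:quasi-oper-sys}(a) to get $\phi^{**}\in\Mor(S^{**},T^{**})$ and then invoking Lemma~\ref{lem:bidual}; you instead argue directly that complete positivity of $\phi^{*}$ forces complete positivity of $\phi$ via \cite[Lemma~2.4(c)]{Ng-MOS}.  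Your route is marginally more economical for (b) in isolation---though you should also record that $\phi$ is completely bounded, which follows from the standard operator-space fact $\|\phi\|_{\cb}=\|\phi^{*}\|_{\cb}$---while the paper's organization has the advantage that the double-dual identity is established once and then reused verbatim in (c) and (e), whereas you rederive it separately.  Both organizations are valid and the underlying ideas coincide.
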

\begin{proof}
(a) The uniqueness of $\phi^\rd$ follows from the fact that both $\mu_{S^*}:S^* \to S^\rd$ and $\mu_{T^*}:T^*\to T^\rd$ are bijective. 
For the existence, note that if $\phi = 0$, then we simply take $\phi^\rd := 0$. 
On the other hand, if $\phi\neq 0$, then the map $\mu_{S^*}\circ \phi^*/\|\phi^*\|\in \Morc_w(T^*;S^\rd)$ (see Proposition \ref{prop:reg}(a)), and the existence of $\phi^\rd$ comes from Proposition \ref{prop:reg}(d). 

Now, if $\phi$ is a complete contraction, then so is $\phi^*$, and we may use $\mu_{S^*}\circ \phi^*$ instead of $\mu_{S^*}\circ \phi^*/\|\phi^*\|$ in the above to conclude that $\phi^\rd$ is a complete contraction (note that $\phi^\rd$ is unique). 

\smnoind
(b) If $\phi, \psi\in \Mor(S,T)$ satisfying $\phi^\rd = \psi^\rd$, then $\phi^* = \psi^*$, which gives $\phi = \psi$. 
For the surjectivity, we consider $\chi\in \Morw(T^\rd, S^\rd)$. 
The weak-$^*$-continuity of $\chi$ produces, via Proposition \ref{prop:reg}(c), a bounded linear map $\phi:S\to T$ with
\begin{equation}\label{eqt:chi-phi-*}
\chi \circ \mu_{T^*} = \mu_{S^*}\circ \phi^*.
\end{equation}
Moreover, as $\chi\in \Mor(T^\rd,S^\rd)$, we obtain from Proposition \ref{prop:predual}(b) and part (a) above, a map $\chi^\rd\in \Morw(T^{\rd\rd}, S^{\rd\rd})$ with $\chi^\rd\circ \mu_{(S^\rd)^*} = \mu_{(T^\rd)^*}\circ \chi^*$. 
It follows that 
$$\chi^\rd \circ \tau_S = \tau_T\circ \phi^{**}.$$ 
From this, and Lemma \ref{lem:quasi-oper-sys}(a), one knows that $\phi^{**}\in \Mor(S^{**}, T^{**})$. 
Now, Lemma \ref{lem:bidual} tells us that $\phi\in \Mor(S,T)$, and the uniqueness in part (a) implies that $\chi = \phi^\rd$. 

\smnoind
(c) If $\phi:S\to T$ is a MOS isomorphism, then $\phi^*$ is a weak-$^*$-homeomorphic MOS isomorphism, and so is $\phi^\rd$. 
For the second statement, we know from part (b) above that $\chi = \phi^\rd$ for a unique $\phi\in \Mor(S,T)$. 
Moreover, the map $\chi^\rd\in \Morw(T^{\rd\rd}, S^{\rd\rd})$ is again a weak-$^*$-homeomorphic MOS isomorphism. 
Therefore, it follows from $\chi^\rd \circ \tau_S = \tau_T\circ \phi^{**}$ and Lemma \ref{lem:quasi-oper-sys}(a) that $\phi^{**}$ is a weak-$^*$-homeomorphic MOS isomorphism. 
Now, Lemma \ref{lem:bidual} implies that $\phi$ is a MOS isomorphism.

\smnoind
(d) Let $D_\#$ be the predual of $D$. 
We consider $j: D_\# \to D^\rd$ to be the composition of the canonical map from $D_\#$ to $D^*$ with  $\mu_{D^*}:D^*\to D^\rd$. 
By parts (a) and (b) of Proposition \ref{prop:reg} as well as the assumption on $D$, we know that $\mu_{D^*}$ is a MOS isomorphism, and hence $j$ is a MOS isomorphism from $D_\#$ onto the operator subsystem $S:=j(D_\#)$. 
Since  $j^*:S^*\to (D_\#)^* = D$ (see Corollary \ref{cor:reform-CP}) is a weak-$^*$-homeomorphic MOS isomorphism, $S^*$ is a dual quasi-operator system, and so, $S$ is dualizable. 
Therefore, $\mu_{S^*}:S^* \to S^\rd$ is a weak-$^*$-homeomorphic MOS isomorphism (see Proposition \ref{prop:reg}), and the composition of $j^*$ with $\mu_{S^*}^{-1}$ will then be a weak-$^*$-homeomorphic MOS isomorphism from $S^\rd$ to $D$. 

\smnoind
(e) As in the proof of part (b) above, there exists $\phi\in \Mor(S,T)$ with $\chi = \phi^\rd$ and $\chi^\rd \circ \tau_S = \tau_T\circ \phi^{**}$. 
Since $\chi$ is a complete contraction, we know from part (a) that $\chi^\rd$ is a complete contraction. 
It follows that $\tau_T\circ \kappa_T\circ \phi = \chi^\rd\circ \tau_S \circ \kappa_S$ is a complete contraction, because $\tau_S\circ \kappa_S$ is completely isometric.
Hence, $\phi$ is a complete contraction (as $\tau_T\circ \kappa_T$ is completely isometric). 

When $\chi$ is a completely isometric MOS isomorphism, we know from Lemma \ref{lem:image-weak-st-cont-bdd-below} that $\chi$ is a weak-$^*$-homeomorphism. 
Thus, $\chi^{-1}$ exists and belongs to $\Morc_w(S^\rd;T^\rd)$.
By the first statement, one can find $\psi\in \Morc(T,S)$ with $\chi^{-1} = \psi^\rd$. 
It follows that $\psi^\rd = (\phi^\rd)^{-1}$, and hence $\psi = \phi^{-1}$ (as both $\mu_{S^*}$ and $\mu_{T^*}$ are bijective). 
Consequently, $\phi$ is a completely isometric MOS isomorphism. 
\end{proof}

\medskip

Parts (a) and (b) of the above tell us that there is a full and  faithful contravariant functor from the category of dualizable operator systems to the category of dualizable dual operator systems. 
Parts (c) and (d) state that this contravariant functor induces an equivalence of categories if one identifies two operator systems that are MOS isomorphic, and identifies two dual operator systems that are weak-$^*$-homeomorphically MOS isomorphic. 

\medskip

Part (e) above tells us that when restricted to the subcategory of dualizable operator systems $S$ with $\tau_S\circ \kappa_S$ being a complete isometry, the above functor is injective on objects. 
Moreover, in this case, if one uses  completely positive complete contractions and weak-$^*$-continuous completely positive complete contractions as the respective morphisms, the above functor is again a full functor. 

\medskip

In the following, we will show that the collection $\CS$ of dualizable operator systems $S$ with  $\tau_S$ being completely isometric (which implies $\tau_S\circ \kappa_S$ being completely isometric, because of Lemma \ref{lem:bidual}) includes all $C^*$-algebras and all unital \textcolor{magenta}{complete} operator systems. 

\medskip

\begin{thm}\label{thm:dual-oper-sys}
Let $S$ be a (not necessarily unital) $C^*$-algebra or a unital \textcolor{magenta}{complete}  operator system. 

\smnoind
(a) Then $S$ is dualizable, $\|f\|^\rd \leq \|f\|_{M_\infty(S^*)} \leq 4 \|f\|^\rd$ ($f\in M_\infty(S^*)$) and 
\begin{equation}\label{eqt:norm-dual-plus}
	\|g\|_{M_\infty(S^*)} = \|g\|^\rd \qquad (g\in M_\infty(S^*)^+). 
\end{equation}

\smnoind
(b) The canonical bijection $\tau_S:S^{**} \to (S^\rd)^\rd$ is a complete isometry. 
\end{thm}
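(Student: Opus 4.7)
The plan is to establish part (a) directly, then derive part (b) by comparing the two suprema already produced in the proof of Lemma \ref{lem:quasi-oper-sys}(b). For dualizability in part (a), I would invoke Theorem \ref{thm:quasi-oper-sys}(a) by verifying $B_{M_\infty(S)^\sa}\subseteq U_S$. When $S\subseteq \CL(\KH)$ is a unital complete operator system with $1_S = I_\KH$, any self-adjoint $x\in M_n(S)$ with $\|x\|\le 1$ satisfies $-1_n \le x \le 1_n$, so $x = \tfrac{1_n+x}{2} - \tfrac{1_n-x}{2}$ is a difference of two elements of $B_{M_n(S)^+}$. When $S$ is a $C^*$-algebra, continuous functional calculus gives the analogous decomposition $x = x_+ - x_-$. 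The inequality $\|f\|^\rd\le \|f\|_{M_\infty(S^*)}$ is immediate from the definition of $\|f\|^\rd$, while the reverse follows by decomposing any $x\in B_{M_n(S)}$ into real and imaginary parts, applying the above to each, and getting $x = (u_1-u_2) + i(u_3-u_4)$ with $u_j\in B_{M_n(S)^+}$, which yields $\|\theta_f^{(n)}(x)\|\le 4\|f\|^\rd$ and hence $\|f\|_{M_\infty(S^*)}\le 4\|f\|^\rd$.

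The key technical step is Equation \eqref{eqt:norm-dual-plus}. For positive $g\in M_m(S^*)^+$, the map $\theta_g:S\to M_m(\BC)$ is completely positive, so I would use the standard identity $\|\theta_g\|_\cb = \|\theta_g(1)\|$ for CP maps on unital operator systems, or, for a $C^*$-algebra, $\|\theta_g\|_\cb = \|\theta_g\| = \sup_\lambda \|\theta_g(e_\lambda)\|$ along an approximate unit $(e_\lambda)$ (using that CP maps between $C^*$-algebras satisfy $\|\cdot\|_\cb = \|\cdot\|$, and that the norm of a positive map is computed on an approximate unit). Since $1$ and each $e_\lambda$ lie in $B_{S^+}$, in either case $\|g\|_{M_m(S^*)} = \|\theta_g\|_\cb \le \|g\|^\rd \le \|g\|_{M_m(S^*)}$. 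This step, which uses the unital or $C^*$-algebraic structure of $S$ essentially, will be the main obstacle of the proof.

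For part (b), the two formulas derived in the proof of Lemma \ref{lem:quasi-oper-sys}(b) give, for any $z=(\mu_{S^*}^*)^{(k)}(x)\in M_k(S^{**})$,
\begin{align*}
\|\tau_S^{(k)}(z)\|_{M_k((S^\rd)^\rd)} & = \sup\big\{\|\theta_x^{(n)}(\varphi)\|:\varphi\in M_n(S^\rd)^+\cap B_{M_n(S^\rd)},\ n\in\BN\big\},\\
\|z\|_{M_k(S^{**})} & = \sup\big\{\|\theta_x^{(n)}(\mu_{S^*}^{(n)}(\phi))\|:\phi\in M_n(S^*)^+\cap B_{M_n(S^*)},\ n\in\BN\big\}.
\end{align*}
Since $S$ is dualizable by (a), Theorem \ref{thm:quasi-oper-sys}(b) makes $\mu_{S^*}$ a MOS isomorphism, so $\mu_{S^*}^{(n)}$ bijects $M_n(S^*)^+$ with $M_n(S^\rd)^+$. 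Equation \eqref{eqt:norm-dual-plus} then gives $\|\phi\|_{M_n(S^*)} = \|\phi\|^\rd = \|\mu_{S^*}^{(n)}(\phi)\|_{M_n(S^\rd)}$ for $\phi\in M_n(S^*)^+$, so the bijection restricts to one between $M_n(S^*)^+\cap B_{M_n(S^*)}$ and $M_n(S^\rd)^+\cap B_{M_n(S^\rd)}$. The two suprema will then range over matched families and agree, proving $\tau_S$ is a complete isometry.
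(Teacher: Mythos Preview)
Your proposal is correct and follows essentially the same approach as the paper: both establish dualizability via the decomposition $B_{M_n(S)^\sa}\subseteq U_S$, deduce the factor $4$ from the four-term decomposition of a general contraction, prove \eqref{eqt:norm-dual-plus} using that the cb-norm of a completely positive $\theta_g$ is attained on the unit (or an approximate unit), and then derive part (b) by observing that \eqref{eqt:norm-dual-plus} forces $\mu_{S^*}^{(n)}$ to biject $M_n(S^*)^+\cap B_{M_n(S^*)}$ onto $M_n(S^\rd)^+\cap B_{M_n(S^\rd)}$, making the two suprema in \eqref{eqt:bidual-norm} and \eqref{eqt:norm-bidual} agree. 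Your write-up is in fact slightly more explicit than the paper's (which refers back to the proof of $(3)\Rightarrow(4)$ in Theorem~\ref{thm:quasi-oper-sys}(a) for the factor $4$ and phrases the norm computation via a net $\{a_i\}\subseteq S^+\cap B_S$ rather than naming $1$ or an approximate unit), but the substance is identical.
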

\begin{proof}
(a) By Proposition \ref{prop:reg}(a), one has $\|f\|^\rd \leq \|f\|_{M_\infty(S^*)}$ (recall that  $\|\cdot\|^\rd$ is the matrix norm induced by $\mu_{S^*}$). 
Under the assumption on $S$, elements in $B_{M_n(S)^\sa}$ are of the form $u-v$ for some $u,v\in B^+_{M_n(S)}$ ($n\in \BN$).
This shows that $S$ is dualizable. 
Moreover, it follows from  the proof of $(3)\Rightarrow (4)$ of Theorem \ref{thm:quasi-oper-sys}(a) that $\|f\|_{M_m(S^*)}\leq 4 \|\mu_{S^*}^{(m)}(f)\|= 4 \|f\|^\rd$ ($m\in \BN$, $f\in M_m(S^*)$). 

Next, we will verify Relation \eqref{eqt:norm-dual-plus}.
In fact, 
consider any $m\in \BN$ and $g\in M_m(S^*)^+$. 
The completely bounded map $\theta_g:S \to M_m(\BC)$ is completely positive.
If either $S$ is a $C^*$-algebra or a unital operator system, then there is a net   $\{a_{i}\}_{i\in \KI}$ in $S^+\cap B_S$ such that
$$\sup \{\|\theta_g^{(n)}(a_{i}\otimes 1_n)\|: i\in \KI\} = \|\theta_g^{(n)}\| \qquad (n\in \BN).$$
Thus, 
$\|g\|^\rd = \|\mu_{S^*}^{(m)}(g)\| \geq \sup_{i\in \KI; n\in \BN} \|\theta_g^{(n)}(a_{i}\otimes 1_n)\| = \sup_{n\in \BN} \|\theta_g^{(n)}\| = \|g\|_{M_m(S^*)}.$

\smnoind
(b) Consider $k\in \BN$ and $x\in M_k((S^\rd)^*)$. 
We know from Relation \eqref{eqt:norm-dual-plus} and Proposition  \ref{prop:reg}(a) that $\mu_{S^*}^{(n)}\big(M_n(S^*)^+\cap B_{M_n(S^*)}\big) = M_n(S^\rd)^+\cap B_{M_n(S^\rd)}$ ($n\in\BN$). 
From this, as well as Relations \eqref{eqt:bidual-norm} and \eqref{eqt:norm-bidual}, we have $\big\|\mu_{(S^\rd)^*}^{(k)}(x)\big\|_{M_k((S^\rd)^\rd)} = \big\|(\mu_{S^*}^*)^{(k)}(x)\big\|_{M_k(S^{**})}$, as required. 
\end{proof}

\medskip

Let $S$ be \textcolor{magenta}{a complete} operator system with the bidual operator system $S^{**}$ being unital. 
By  Theorem \ref{thm:dual-oper-sys}(a), we know that that $S^{**}$ is dualizable. Using Lemma \ref{lem:bidual} and Corollary \ref{cor:bidual-quasi-os}(b),  one can then show that $S$ itself is dualizable. 
In \cite[Theorem 4.2]{Huang}, a characterization of an operator system $S$ with $S^{**}$ being unital is given, in term of the existence of a ``normed approximate order unit'' on $S$.
We wonder if it is possible to use the materials in \cite{Huang} to generalize Theorem \ref{thm:dual-oper-sys}(b) to all operator systems whose bidual are unital. 
Since we do not need this general statement, we will leave it to the readers.

\medskip

The following tells us that, under the duality funtor as in the above, the category consisting of all $C^*$-algebras and all unital \textcolor{magenta}{complete} operator systems, with morphisms being completely positive complete contractions, is a full subcategory of the category of dual operator systems, with morphisms being weak-$^*$-continuous completely positive complete contractions.

\medskip

\begin{cor}\label{cor:C-st-or-unital}
Let $S$ and $T$ be either a $C^*$-algebra or a unital \textcolor{magenta}{complete}  operator system. 

\smnoind
(a) For any $\phi\in \Morc(S,T)$, there is  a unique $\phi^\rd\in \Ccpw(T^\rd, S^\rd)$ satisfying $\phi^\rd\circ \mu_{T^*} = \mu_{S^*}\circ \phi^*$. 

\smnoind
(b) The assignment $\phi\mapsto \phi^\rd$ is a bijection from  $\Morc(S,T)$ onto $\Ccpw(T^\rd, S^\rd)$. 

\smnoind
(c) There is a completely isometric MOS isomorphism from $S$ onto $T$ if and only if there is a weak-$^*$-continuous completely isometric MOS isomorphism from $T^\rd$ onto $S^\rd$. 
\end{cor}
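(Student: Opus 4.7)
The plan is to derive this corollary essentially as bookkeeping from Theorem~\ref{thm:dualization-functor}, combined with Theorem~\ref{thm:dual-oper-sys}. The first step is to confirm the hypotheses of those theorems. Theorem~\ref{thm:dual-oper-sys}(a) guarantees that any (not necessarily unital) $C^*$-algebra or unital complete operator system is dualizable, so both $S$ and $T$ are dualizable operator systems. Theorem~\ref{thm:dual-oper-sys}(b) gives that $\tau_S$ and $\tau_T$ are completely isometric, and Lemma~\ref{lem:bidual} tells us that $\kappa_S$ and $\kappa_T$ are completely order monomorphic complete isometries. Consequently $\tau_S\circ \kappa_S$ and $\tau_T\circ \kappa_T$ are completely isometric, which is precisely the hypothesis appearing in Theorem~\ref{thm:dualization-functor}(e).

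For part~(a), I would apply Theorem~\ref{thm:dualization-functor}(a) to $\phi\in \Morc(S,T)\subseteq \Mor(S,T)$: the first clause produces the unique $\phi^\rd\in \Morw(T^\rd,S^\rd)$ satisfying $\phi^\rd\circ \mu_{T^*}=\mu_{S^*}\circ \phi^*$, and the ``in addition'' clause upgrades $\phi^\rd$ to a complete contraction, so that $\phi^\rd\in \Ccpw(T^\rd,S^\rd)$. For part~(b), injectivity of $\phi\mapsto \phi^\rd$ on $\Morc(S,T)$ is inherited from the injectivity already established on the larger set $\Mor(S,T)$ in Theorem~\ref{thm:dualization-functor}(b). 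Surjectivity onto $\Ccpw(T^\rd,S^\rd)$ is precisely the content of the first statement of Theorem~\ref{thm:dualization-functor}(e): any $\chi\in \Ccpw(T^\rd,S^\rd)$ is of the form $\phi^\rd$ for some $\phi\in \Morc(S,T)$.

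For part~(c), the forward direction is handled as follows. Assume $\phi:S\to T$ is a completely isometric MOS isomorphism. Then both $\phi$ and $\phi^{-1}$ lie in $\Morc$, so part~(a) gives $\phi^\rd,(\phi^{-1})^\rd\in \Ccpw$; the uniqueness in part~(a) together with functoriality (applied to $\phi\circ \phi^{-1}=\id_T$ and $\phi^{-1}\circ \phi=\id_S$, and noting $\id_S^\rd=\id_{S^\rd}$) forces $(\phi^{-1})^\rd=(\phi^\rd)^{-1}$. Thus $\phi^\rd$ is a weak-$^*$-continuous complete contraction whose inverse is also a complete contraction, making it a weak-$^*$-continuous completely isometric MOS isomorphism (its weak-$^*$-homeomorphism nature also follows from Lemma~\ref{lem:image-weak-st-cont-bdd-below} applied to $\phi^\rd$). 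The converse is the second assertion of Theorem~\ref{thm:dualization-functor}(e): from $\chi$ being a weak-$^*$-continuous completely isometric MOS isomorphism, one obtains $\phi\in \Morc(S,T)$ which is itself a completely isometric MOS isomorphism with $\chi=\phi^\rd$.

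There is no genuine obstacle here: all the substantive content has already been packaged into Theorem~\ref{thm:dualization-functor} and Theorem~\ref{thm:dual-oper-sys}. The only care required is aligning each of the three statements of the corollary with the precise clause in Theorem~\ref{thm:dualization-functor} that supplies it, and verifying that the hypothesis ``$\tau_S\circ \kappa_S$ is completely isometric'' needed to invoke Theorem~\ref{thm:dualization-functor}(e) holds automatically in the $C^*$-algebra or unital complete operator system setting, by virtue of Theorem~\ref{thm:dual-oper-sys}(b) together with Lemma~\ref{lem:bidual}.
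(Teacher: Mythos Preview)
Your proposal is correct and follows essentially the same approach as the paper: establish dualizability and the complete isometry of $\tau_S\circ\kappa_S$, $\tau_T\circ\kappa_T$ via Theorem~\ref{thm:dual-oper-sys} and Lemma~\ref{lem:bidual}, then read off parts (a)--(c) from the corresponding clauses of Theorem~\ref{thm:dualization-functor}, handling the forward direction of (c) by applying part~(a) to both $\phi$ and $\phi^{-1}$ and using uniqueness to see that $(\phi^{-1})^\rd=(\phi^\rd)^{-1}$. The paper's proof is more terse but structurally identical.
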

\begin{proof}
	By Theorem \ref{thm:dual-oper-sys}, $S$ and $T$ are dualizable and both $\tau_S\circ \kappa_S$ and $\tau_T\circ \kappa_T$ are complete isometry (see also Lemma \ref{lem:bidual}). 
	
	\smnoind
	(a) This follows from part (a) of Theorem \ref{thm:dualization-functor}. 
	
	\smnoind
	(b) This part follows from parts (a), (b) and (e) of Theorem \ref{thm:dualization-functor}. 
	
	\smnoind
	(c) Suppose that $\phi: S \to T$ is a completely isometric MOS isomorphism. 
	Then $\phi^{-1}$ exists and belongs to $\Morc(T;S)$. 
	Since the two maps $\phi^\rd\in \Morc_w(T^\rd; S^\rd)$ and  $(\phi^{-1})^\rd\in \Morc_w(S^\rd;T^\rd)$ are inverse of each other, we know that $\phi^\rd$ is a weak-$^*$-homeomorphic completely isometric MOS isomorphism. 
	The gives the forward implication. 
	The backward implication is precisely the second statement of Theorem \ref{thm:dualization-functor}(e). 
\end{proof}

\medskip

\begin{eg}\label{eg:dualizable}
Let $N\in \BN$ and $S$ be the $N$-dimensional commutative $C^*$-algebra $\ell_\infty^N$. 
We claim that $S^\rd$ is not a unital operator system. 
In fact, suppose on the contrary that there exists an order unit $u = (u_1,\dots,u_N)\in (S^\rd)^+ \subseteq \ell^N_1$ with $\|u\|^\rd = 1$ that defines the matrix norm on $S^\rd$. 
By Theorem \ref{thm:dual-oper-sys}(a), we have $B_{S^*}\subseteq B_{S^\rd}$.
For $k =1,...,N$, if $e_k\in S^* = \ell_1^N$ is the element with the $k$-entry being $1$ and all the other entries being zero, then $0 \leq e_k\in B_{S^*}\subseteq B_{S^\rd}$, and hence $e_k\leq u$; i.e., $u_k\geq 1$.  
This implies that $(1, \dots, 1)\leq u$.  
However, in this case, one has  $\|u\|_{S^*} \geq N \gneq \|u\|^\rd$, which contradicts Relation \eqref{eqt:norm-dual-plus}.  
\end{eg}

\medskip

Let us recall that a unital operator system $T\subseteq \CL(\KH)$ is a \emph{graph system} if it is closed under the weak-$^*$-topology (see \cite{Weaver, Yashin}).  
The following is a direct consequence of Theorem \ref{thm:dual-oper-sys}(a) as well as Corollaries \ref{cor:reform-CP} and \ref{cor:bidual-quasi-os}(a). 

\medskip

\begin{cor}
Let $T\subseteq \CL(\KH)$ be a graph system and $T_\#:= \{\omega|_T: \omega\in \CL(\KH)_* \}\subseteq T^*$. 
Then the canonical map from $T$ to $(T_\#)^*$ is a weak-$^*$-homeomorphic completely order monomorphic complete isometry. 
Moreover, $T_\#$ admits a largest matrix norm $\|\cdot\|_\rd$ that is dominated by and equivalent to the restriction of the dual matrix norm $\|\cdot\|^*$ on $T_\#$, under which $T_\#$ becomes a dualizable operator system.  
In this case, we have $\|x\|_\rd \leq \|x\|^* \leq 4 \|x\|_\rd$ ($x\in M_\infty(T_\#)$) and $\|u\|_\rd = \|u\|^*$ $(u\in M_\infty(T_\#)^+)$. 
\end{cor}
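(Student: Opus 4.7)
The plan is to deduce the corollary by applying Theorem \ref{thm:dual-oper-sys}(a) to the unital complete operator system $T$, restricting the resulting matrix norm on $T^*$ down to $T_\#$, and packaging the rest using Corollaries \ref{cor:reform-CP} and \ref{cor:bidual-quasi-os}(a). For the first conclusion, since $T$ is a graph system it is weak-$^*$-closed in $\CL(\KH)$ and is hence a dual Banach space with predual $T_\#$; the matrix cone of $T$ is weak-$^*$-closed as the intersection of $M_\infty(T)$ with the weak-$^*$-closed cone of positives in $M_\infty(\CL(\KH))$, so $T$ is a dual MOS with predual $T_\#$. Corollary \ref{cor:reform-CP} then upgrades the canonical Banach-space isomorphism $T \to (T_\#)^*$ to a completely order monomorphic complete isometry, while the weak-$^*$-homeomorphism property is the standard duality.

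Since $T$ is unital and complete, Theorem \ref{thm:dual-oper-sys}(a) applied to $T$ produces the matrix norm $\|\cdot\|^\rd$ on $T^*$ satisfying $\|f\|^\rd \leq \|f\|_{T^*} \leq 4\|f\|^\rd$ and $\|g\|^\rd = \|g\|_{T^*}$ for $g$ positive. Setting $\|\cdot\|_\rd$ to be the restriction of $\|\cdot\|^\rd$ to $M_\infty(T_\#)$, the asserted norm inequalities on $T_\#$ are immediate. The pair $(T_\#, \|\cdot\|_\rd)$ is an operator system, being a $^*$-invariant subspace of the dual operator system $T^\rd = (T^*, \|\cdot\|^\rd)$ with the induced MOS structure; moreover, $(T_\#, \|\cdot\|_\rd)^*$ is, as a MOS, the graph system $T$ equipped with a matrix norm equivalent to $\|\cdot\|_T$, hence a dual quasi-operator system by Proposition \ref{prop:reg}(b). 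Corollary \ref{cor:bidual-quasi-os}(a) can also be invoked here, since $U_T$ is a zero neighborhood of $M_\infty(T)^\sa$ by unitality of $T$. Thus $(T_\#, \|\cdot\|_\rd)$ is a dualizable operator system.

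For the maximality, let $\rn$ be any matrix norm on $T_\#$ with $\rn \leq \|\cdot\|^*$, $\rn \sim \|\cdot\|^*$, and $(T_\#, \rn)$ a dualizable operator system. By Proposition \ref{prop:predual}(a), $(T_\#, \rn)^{**}$ is a dual operator system, and Lemma \ref{lem:bidual} ensures $\kappa_{T_\#}$ is a complete isometry. Identifying $(T_\#, \rn)^{**}$ with $T^*$ equipped with a bidual matrix norm $\rn^{**}$ so that $\kappa_{T_\#}$ becomes the canonical inclusion $T_\# \hookrightarrow T^*$, the complete contraction $(T_\#, \|\cdot\|^*) \to (T_\#, \rn)$ passes to biduals to give $\rn^{**} \leq \|\cdot\|_{T^*}$ with equivalence, so $\rn^{**} \in \CN_T^\mathrm{sys}$, and the maximality of $\|\cdot\|^\rd$ from Theorem \ref{thm:quasi-oper-sys}(b) yields $\rn^{**} \leq \|\cdot\|^\rd$ on $T^*$. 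Restricting to $T_\#$ and using $\rn^{**}|_{T_\#} = \rn$ gives $\rn \leq \|\cdot\|_\rd$. The hard part is precisely this transfer of maximality from $T^*$ down to its subspace $T_\#$, requiring that one identifies $(T_\#, \rn)^{**}$ with $T^*$ in exactly the way that makes the restriction of $\rn^{**}$ coincide with $\rn$.
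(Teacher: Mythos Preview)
Your proposal is correct and follows essentially the same route as the paper, which records the corollary as ``a direct consequence of Theorem~\ref{thm:dual-oper-sys}(a) as well as Corollaries~\ref{cor:reform-CP} and~\ref{cor:bidual-quasi-os}(a)''. You simply spell out the details, particularly for the maximality claim, where you correctly pass to biduals so as to invoke the maximality of $\|\cdot\|^\rd$ on $T^*$ from Theorem~\ref{thm:quasi-oper-sys}(b) and then restrict back to $T_\#$. One small remark: your alternative appeal to Corollary~\ref{cor:bidual-quasi-os}(a) for the dualizability of $(T_\#,\|\cdot\|_\rd)$ is a bit oblique (that corollary concerns $X^{**}$, not $X^*$), but since your primary argument via Proposition~\ref{prop:reg}(b) already does the job, this does not affect correctness.
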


\medskip

The resulting operator system in the above can be regarded as a ``predual graph system''. 

\medskip

\appendix\section{Two remarks}

\medskip

\subsection{Alternative dual objects for unital operator systems.}
In this subsection, we present an alternative dual object for a unital operator system, in the form of a unital dual operator system. 

\medskip

Let us call a unital complete contraction from a unital operator system $S$ to  $\BC$ 
a \emph{character} on $S$.
When a unital operator system is equipped with a fixed character, we call it a \emph{pointed unital operator system}. 
For example, if $K$ is a compact Hausdorff space and $x\in K$ is a fixed point, then the evaluation $\delta_x$ at $x$ is a character on $C(K)$, and $(C(K), \delta_x)$ is a pointed unital operator system. 

\medskip

By a \emph{pointed unital dual operator system}, we mean a unital dual operator system equipped with a ``weak-$^*$-continuous character''. 
Let $W$  and $V$ be unital dual operator systems. 
The set of weak-$^*$-continuous unital complete contractions from $W$ to $V$ will be denoted by $\Ucpw(W,V)$. 
If $\omega\in \Ucpw(W,\BC)$ and $\delta\in \Ucpw(V,\BC)$, we let $\Ucp_w^{\omega,\delta}(W,V)$ be the subset consisting of ``character preserving'' maps; i.e., 
$$\Ucp_w^{\omega,\delta}(W,V) := \{\Phi\in \Ucpw(W,V): \omega = \delta\circ \Phi \}.$$ 

\medskip

In the following, we outline a representation $\bar \mu_W$ for a unital dual operator system $W$ (which is a weak-$^*$-version of the corresponding construction in \cite[Remark 2.11]{HN}), similar to the representation $\mu_D$ of a dual SMOS $D$. 
For $n\in \BN$, we set $\mathcal{WS}^W_n := \Ucpw(W; M_n(\BC))$. 
By a similar argument as that of Proposition  \ref{prop:reg}(a),  the canonical map 
$$\bar \mu_W: W \to {\bigoplus}_{n\in \BN}^{\ell_\infty} \ell_\infty(\mathcal{WS}^W_n; M_n(\BC))$$ is a weak-$^*$-continuous unital complete contraction. 
Moreover, as $W$ is a unital self-adjoint weak-$^*$-closed subspace of some $\CL(\KH)$, it follows from Relation \eqref{eqt:norm-L(H)} that $\bar \mu_W$ is complete isometric, and hence it is also completely order monomorphic. 
Furthermore, Lemma \ref{lem:image-weak-st-cont-bdd-below} tells us that $\bar \mu_W$ is a weak-$^*$-homeomorphism from $W$ onto $\bar \mu_W(W)$. 

\medskip

On the other hand, for a dual MOS $D$, we equip $D^\wu:= \overline{\mu_D(D) + 1_{N(D)}}^{\sigma(N(D),N(D)_*)}$ with the induced dual operator system structure from $N(D)$. 

\medskip

\begin{prop}\label{prop:alt-def-dual-MOS}
Let $D$ be a dual MOS.   
Then $\big(D^\wu, \mu_D\big)$ is the \emph{weak-$^*$-partial unitization} for $D$ in the following sense:
for any unital dual operator system $W$ and any $\Phi\in \Ccpw(D,W)$, there is a unique map $\ti \Phi\in \Ucpw\big(D^\wu; W\big)$ 
satisfying $\Phi = \ti \Phi
\circ \mu_D$. 
\end{prop}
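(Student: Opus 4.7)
The plan is to parallel the construction in Proposition \ref{prop:reg}(d), with the extra twist that the extension $\tilde\Phi$ must be unital. Uniqueness is immediate: if $\tilde\Phi_1, \tilde\Phi_2 \in \Ucpw(D^\wu, W)$ both satisfy $\tilde\Phi_i \circ \mu_D = \Phi$, they agree on $\mu_D(D)$ by hypothesis and on $1_{N(D)}$ by unitality, hence on $\mu_D(D) + \BC 1_{N(D)}$; their weak-$^*$-continuity then forces them to coincide on the weak-$^*$-closure $D^\wu$.

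For existence, one first reduces to the case where $\Phi(D) + \BC 1_W$ is weak-$^*$-dense in $W$. Namely, replace $W$ by $\overline{\Phi(D) + \BC 1_W}^{\sigma(W, W_\#)}$, which is still a unital dual operator system (weak-$^*$-closed, $^*$-invariant, containing $1_W$); its inclusion into the original $W$ is a weak-$^*$-continuous unital completely positive complete isometry, so constructing the extension into the smaller system suffices. Assume henceforth this density holds. Then for each $n$, the composition map $\rho_n \colon \WS^W_n \to \WQ^D_n$, $\psi \mapsto \psi \circ \Phi$, is injective: if $\psi_1 \circ \Phi = \psi_2 \circ \Phi$, then $\psi_1 - \psi_2$ vanishes on $\Phi(D)$ and, by unitality, on $\BC 1_W$, so weak-$^*$-continuity gives $\psi_1 = \psi_2$.

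Using this injection, construct a pre-adjoint $\Lambda \colon (N_W)_* \to N(D)_*$, where $N_W := \bigoplus_{n\in \BN}^{\ell_\infty} \ell_\infty(\WS^W_n; M_n(\BC))$, by extension-by-zero along $\rho_n$: for $f \in \ell_1(\WS^W_n; M_n(\BC)^*)$ and $\phi \in \WQ^D_n$, set
$$\Lambda_n(f)(\phi) := \begin{cases} f(\psi) & \text{if } \phi = \rho_n(\psi) \text{ for the unique } \psi \in \WS^W_n, \\ 0 & \text{otherwise.} \end{cases}$$
Injectivity of $\rho_n$ makes each $\Lambda_n$ a completely positive complete isometry, so $\Lambda$ is a completely positive complete contraction and its adjoint $\Lambda^* \colon N(D) \to N_W$ is a weak-$^*$-continuous completely positive complete contraction. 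A direct pairing computation based on Relation \eqref{eqt:dual-l1} yields $\Lambda^*(g)_n(\psi) = g_n(\psi \circ \Phi)$, from which one reads off $\Lambda^* \circ \mu_D = \bar \mu_W \circ \Phi$ and $\Lambda^*(1_{N(D)})_n(\psi) = 1_n$ for all $\psi, n$; hence $\Lambda^*$ is unital with $\Lambda^*(1_{N(D)}) = \bar \mu_W(1_W)$.

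Finally, $\bar \mu_W(W)$ is weak-$^*$-closed in $N_W$ (by Lemma \ref{lem:image-weak-st-cont-bdd-below}, since $\bar\mu_W$ is weak-$^*$-continuous, bounded, and bounded below), so by weak-$^*$-continuity of $\Lambda^*$ the preimage $(\Lambda^*)^{-1}(\bar \mu_W(W))$ is a weak-$^*$-closed subspace of $N(D)$ containing both $\mu_D(D)$ and $1_{N(D)}$, and therefore containing $D^\wu$. Setting $\tilde\Phi := \bar \mu_W^{-1} \circ \Lambda^*|_{D^\wu}$ produces the required weak-$^*$-continuous unital completely positive complete contraction satisfying $\tilde\Phi \circ \mu_D = \Phi$. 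The main obstacles are the weak-$^*$-density reduction that secures injectivity of $\rho_n$, and the pairing bookkeeping showing $\Lambda^*(1_{N(D)}) = \bar \mu_W(1_W)$; the remaining verifications are entirely parallel to Proposition \ref{prop:reg}(d).
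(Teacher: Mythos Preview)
Your proof is correct and follows essentially the same approach as the paper: reduce to the case where $\Phi(D)+\BC 1_W$ is weak-$^*$-dense in $W$, then run the argument of Proposition~\ref{prop:reg}(d) with $\bar\mu_W$ and $\WS^W_n$ in place of $\mu_V$ and $\WQ^V_n$. The only extra ingredient beyond Proposition~\ref{prop:reg}(d) is the verification that $\Lambda^*(1_{N(D)}) = \bar\mu_W(1_W)$, which you handle correctly.
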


\medskip

In fact, by replacing $W$ with the weak-$^*$-closure of $\Phi(D) + \BC 1_W$ if necessary, we may assume that $\Phi(D) + \BC 1_W$ is weak-$^*$-dense in $W$. 
By substituting $\mu_V$ and $\WQ^V_n$  with $\bar \mu_W$ and $\WS^W_n$, respectively, in the argument for Proposition \ref{prop:reg}(d), one obtains the required map $\ti \Phi$ in the above.

\medskip

Let $S$ and $T$ be unital \textcolor{magenta}{complete} operator systems. 
We set $S^\diamond$ to be the weak-$^*$-partial unitization of $S^*$; i.e.,  
$$S^\diamond := S^\rd + \BC 1_{N(S^*)}$$ 
(see Proposition \ref{prop:reg}(c)).
The zero map from $S^*$ to $\BC$ induced, via Proposition \ref{prop:alt-def-dual-MOS}, a weak-$^*$-continuous character $\omega_S$ on $S^\diamond$ with its kernel being $S^\rd$. 
This gives a pointed unital dual operator system $(S^\diamond, \omega_S)$. 

\medskip

For $\chi\in \Ccpw(T^\rd, S^\rd)$, it follows from  $\chi\circ \mu_{T^*}\in  \Ccpw(T^*, S^\diamond)$ that there is $\check \chi \in \Ucp_w(T^\diamond, S^\diamond)$ with $\check \chi \circ \mu_{T^*} = \chi\circ \mu_{T^*}$ (see Proposition  \ref{prop:alt-def-dual-MOS}).
Obviously, $\check \chi$ is the unique unital extension of $\chi$ and is character preserving. 
Conversely, any map in $\Ucp_w^{\omega_T,\omega_S}(T^\diamond, S^\diamond)$ will restrict to an element in $\Ccpw(T^\rd, S^\rd)$ whose unital extension coincides with the original map. 
Thus, we obtain a bijection from $\Ccpw(T^\rd, S^\rd)$ onto $\Ucp_w^{\omega_T,\omega_S}(T^\diamond, S^\diamond)$.

\medskip
 
Now, Corollary \ref{cor:C-st-or-unital} gives the following result (for part (c), we note that a completely isometric MOS isomorphism between unital operator systems will automatically preserve the order units).
This result means that ``the duality construction'' induces a full and faithful functor, which is also  is injective on objects, from the category of unital \textcolor{magenta}{complete} operator systems, with completely positive complete contractions as their morphisms, to the category of pointed unital dual operator systems, with character preserving weak-$^*$-continuous unital complete contractions as their morphisms.  
In plain words, the former category is a full category of the latter.

\medskip

\begin{cor}\label{cor:unital-dual}
	Let $S$ and $T$ be unital \textcolor{magenta}{complete} operator systems. 
	
	\smnoind
	(a) For any $\phi\in \Morc(S,T)$, there is  a unique $\phi^\diamond\in \Ucpw(T^\diamond, S^\diamond)$ satisfying $\phi^\diamond\circ \mu_{T^*} = \mu_{S^*}\circ \phi^*$. 
	
	\smnoind
	(b) The assignment $\phi\mapsto \phi^\diamond$ is a bijection from  $\Morc(S,T)$ onto $\Ucp_w^{\omega_T,\omega_S}(T^\diamond, S^\diamond)$. 
	
	\smnoind
	(c) There is a character preserving weak-$^*$-continuous unital complete isometry from $T^\diamond$ onto $S^\diamond$ if and only if there is a unital complete isometry from $S$ onto $T$. 
\end{cor}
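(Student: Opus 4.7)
The plan is to reduce parts (a) and (b) to Corollary \ref{cor:C-st-or-unital} by composing with the bijection $\chi\mapsto \check\chi$ from $\Ccpw(T^\rd,S^\rd)$ onto $\Ucp_w^{\omega_T,\omega_S}(T^\diamond,S^\diamond)$ that was established in the discussion preceding the corollary. For part (a), given $\phi\in\Morc(S,T)$ I would first produce $\phi^\rd\in\Ccpw(T^\rd,S^\rd)$ via Corollary \ref{cor:C-st-or-unital}(a), then set $\phi^\diamond := \check{\phi^\rd}$. The compatibility $\phi^\diamond\circ\mu_{T^*}=\mu_{S^*}\circ \phi^*$ will be immediate from $\check{\phi^\rd}\circ\mu_{T^*}=\phi^\rd\circ\mu_{T^*}$ and the analogous identity for $\phi^\rd$. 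Uniqueness will follow because any two such $\phi^\diamond$ agree on $\mu_{T^*}(T^*)$, hence on $T^\rd + \BC\cdot 1_{N(T^*)}$, and by weak-$^*$-continuity on all of $T^\diamond$. Part (b) is then the composition of the bijection in Corollary \ref{cor:C-st-or-unital}(b) with the bijection $\chi\mapsto\check\chi$.

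For part (c), I tackle the two directions separately. For the backward direction, a unital complete isometry $\phi:S\to T$ is automatically a completely isometric MOS isomorphism between unital operator systems, so $\phi^{-1}\in\Morc(T,S)$ exists and both $\phi^\diamond$ and $(\phi^{-1})^\diamond$ are produced by part (a). Contravariance of the assignment together with the uniqueness in part (a) forces $(\phi^{-1})^\diamond\circ\phi^\diamond=\id_{T^\diamond}$ and $\phi^\diamond\circ(\phi^{-1})^\diamond=\id_{S^\diamond}$, so $\phi^\diamond$ is a bijective unital complete contraction with complete contraction inverse, hence a weak-$^*$-continuous unital complete isometry, and it is character preserving because it lies in $\Ucp_w^{\omega_T,\omega_S}(T^\diamond,S^\diamond)$ by construction.

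For the forward direction, suppose $\Theta\in\Ucp_w^{\omega_T,\omega_S}(T^\diamond,S^\diamond)$ is a complete isometry onto $S^\diamond$. By part (b), $\Theta = \phi^\diamond$ for a unique $\phi\in\Morc(S,T)$. Since $\Theta$ preserves characters, $\Theta^{-1}$ exists and is also a character preserving weak-$^*$-continuous unital complete isometry (weak-$^*$-continuity of $\Theta^{-1}$ coming from Lemma \ref{lem:image-weak-st-cont-bdd-below}). Applying part (b) again, $\Theta^{-1}=\psi^\diamond$ for some $\psi\in\Morc(T,S)$, and the contravariant identities $(\psi\circ\phi)^\diamond=\id_{S^\diamond}=\id_S^\diamond$ and $(\phi\circ\psi)^\diamond=\id_T^\diamond$ together with the injectivity asserted in part (b) give $\psi=\phi^{-1}$. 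Thus $\phi$ is a bijective complete contraction with complete contraction inverse, i.e.\ a completely isometric MOS isomorphism; the fact, invoked in the paragraph preceding the statement, that such an isomorphism between unital operator systems preserves the order units will then finish the job.

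The main obstacle is the forward direction of (c), specifically promoting the information that $\Theta$ is a complete isometry (rather than merely a MOS isomorphism) to the conclusion that the associated $\phi\in\Morc(S,T)$ is itself a complete isometry between the unital operator systems. I expect this to be handled cleanly by the inverse argument above, using the uniqueness clause in part (a) to ensure $(\phi^{-1})^\diamond=(\phi^\diamond)^{-1}$; all the other verifications are formal consequences of the universal properties of $(\cdot)^\rd$ (Theorem \ref{thm:dualization-functor}) and of the weak-$^*$-partial unitization (Proposition \ref{prop:alt-def-dual-MOS}).
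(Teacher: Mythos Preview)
Your proposal is correct and follows essentially the same route as the paper: the paper's entire proof is the one-line observation that Corollary~\ref{cor:C-st-or-unital}, composed with the bijection $\chi\mapsto\check\chi$ from $\Ccpw(T^\rd,S^\rd)$ onto $\Ucp_w^{\omega_T,\omega_S}(T^\diamond,S^\diamond)$ established in the preceding paragraph, yields all three parts (together with the remark that completely isometric MOS isomorphisms between unital operator systems preserve order units). Your explicit inverse argument for part~(c) is just a spelled-out version of how Corollary~\ref{cor:C-st-or-unital}(c) transfers through that bijection; note in your uniqueness argument for part~(a) that since $T^\diamond = T^\rd + \BC\,1_{N(T^*)}$ with $T^\rd=\mu_{T^*}(T^*)$ already weak-$^*$-closed (Proposition~\ref{prop:reg}(c)), the weak-$^*$-continuity appeal is not actually needed---agreement on $\mu_{T^*}(T^*)$ plus unitality already suffices.
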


\medskip

Note that although the unital dual operator system $S^\diamond$ is not MOS isomorphic to the dual space of $S$, the dual operator subsystem $\ker \omega_S$ is weak-$^*$-homeomorphically MOS isomorphic to $S^*$. 

\medskip

Obviously, if we use unital complete contractions as morphisms between unital operator systems, then the above duality functor is still faithful, but will not be full.

\medskip

\subsection{Corresponding results for ordered normed spaces and function systems.}\label{subsec:non-matrix} 
SMOS in the main context can be regarded as the matrix analogues of ordered normed spaces (while MOS are analogues of ordered normed spaces with proper cones), and operator systems can be regarded as matrix analogues of function systems. 
More precisely, we call a self-adjoint subspace of $C(K)$ for some compact Hausdroff space $K$, equipped with the induced ordered normed space structure from $C(K)$, a \emph{function system}. 
A function system is said to be \emph{unital} if it contains the identity of $C(K)$ (notice that the term ``function systems'' in \cite{BM} are unital function systems in our sense). 
On the other hand, a \emph{quasi-function system} is an ordered normed space such that there is an ordered normed space isomorphism (not necessarily isometric) from this space to a function system. 

\medskip

Furthermore, a \emph{dual ordered Banach space} is the dual space $X^*$ of an ordered \textcolor{magenta}{Banach} space $X$,  equipped with the dual norm and the dual cone.
A \emph{dual function system} is a weak-$^*$-closed self-adjoint subspace of $\ell_\infty(I)$ for a set $I$, equipped with the induced weak-$^*$-topology and the induced ordered Banach space structure. 
By \cite[Corollary 2.2]{BM}, dual function systems can also be described as weak-$^*$-closed self-adjoint subspaces of $C(K)^{**}$ for some compact Hausdorff space $K$. 
If $X$ is a dual ordered \textcolor{magenta}{Banach} space such that there is a weak-$^*$-continuous ordered Banach space isomorphism (not necessarily isometric) to a dual function system, then $X$ is called a \emph{dual quasi-function system}.

\medskip

Most of the results in the main context of this article have analogues in the case of ordered \textcolor{magenta}{Banach} spaces and function systems. 
The arguments are similar to, and easier than, the matrix case results (in particular, the technical consideration as that of Lemma \ref{lem-BM1.1} is no longer required), but one needs to replace $M_n(X)$ and $\CL(\KH)$ with $X$ and $\ell^\infty(I)$, respectively.
Note that when considering the bidual  of $C(K)$ in some of the proofs, one will need \cite[Corollary 2.2]{BM}. 
In particular, for the argument for the analogue of Lemma \ref{lem:norm-pos-cont}, one requires a weak-$^*$-continuous positive isometry from $C(K)^{**}$ to some $\ell^\infty(I)$. 

\medskip

Some of these analogues seem to be well-known; e.g.,  for an ordered \textcolor{magenta}{Banach} space $X$, 
\begin{itemize}
	\item $X^*$ has a proper cone if and only if the linear span of $X^+$ is norm dense in $X$;
	\item $X^*$ is a dual quasi-function system if and only if $X$ satisfies a form of bounded decomposition property (namely, $B_X\cap X^+ - B_X\cap X^+$ is a zero neighborhood of $X^\sa$); 

	\item if $X$ is a quasi-function system (respectively, function system), then $X^{**}$ is a dual quasi-function system (respectively, dual function system). 
\end{itemize}

\medskip

On the other hand, some of these analogues seem to be new; e.g. for an ordered \textcolor{magenta}{Banach} space $X$, if we denote by $\nu_{X^*}: X^*\to \ell_\infty(B_X\cap X^+)$ the evaluation map, and by $X^\flat$ the dual function system $\overline{\nu_{X^*}(X^*)}^{\sigma(\ell_\infty(B_X\cap X^+), \ell_1(B_X\cap X^+))}$, then 

\begin{itemize}
	\item $X^*$ is a dual quasi-function system if and only if $\nu_{X^*}(X^*)$ is norm-closed  in $\ell_\infty(B_X\cap X^+)$ (in this case, it is also weak-$^*$-closed  in $\ell_\infty(B_X\cap X^+)$); 
	
	\item $X^*$ is a dual function system if and only if $\nu_{X^*}$ is isometric;
	
	\item when $F$ is a dual function system and $\phi\in \Pcw(X^*;F)$, there is a unique $\bar \phi\in \Pcw(X^\flat;F)$ with $\bar \phi\circ \nu_{X^*} = \phi$, where $\Pcw$ is the set of weak-$^*$-continuous positive contraction;

	\item when $X^*$ is a dual quasi-function system, the map $\nu_{X^*}$ is a weak-$^*$-homeomorphic ordered Banach space isomorphism from $X^*$ onto $X^\flat$,  and $(X^\flat)^*$ is again a dual quasi-function system. 
	
\end{itemize}

\medskip
Furthermore,

\begin{itemize}

	\item if $D$ is a dual function system, then $D^*$ is a quasi-function system if and only if there is a quasi-function system $Z$ with $D\cong Z^\flat$, as dual ordered Banach spaces. 

	\item if $X$ and $Y$ are either a unital \textcolor{magenta}{complete} function system or a space of the form $\{f\in C(K): f(x_0) = 0 \}$ for a fixed element $x_0\in K$, then 
	\begin{enumerate}[(i)]
		
		\item $X^*$ and $Y^*$ are dual quasi-function systems;
		
		\item the canonical map from $X^{**}$ to $(X^\flat)^\flat$ is isometric; 
		\item for any $\psi\in \Pc(X,Y)$, there is a unique $\psi^\flat\in \Pcw(Y^\flat, X^\flat)$ with $\psi^\flat\circ \nu_{Y^*} = \nu_{X^*}\circ \psi^*$, where $\Pc$ is the set of positive contractions and the set of weak-$^*$-continuous positive contractions, respectively;
		\item $\psi\mapsto \psi^\flat$ is a bijection from $\Pc(X,Y)$ onto $\Pcw(Y^\flat, X^\flat)$;
		\item there is an isometric order isomorphism from $X$ to $Y$ if and only if there is a weak-$^*$-homeomorphic isometric order isomorphism from $Y^\flat$ to $X^\flat$. 
	\end{enumerate}
\end{itemize}

\medskip

\section*{Acknowledgement}

The author is supported by National Natural Science Foundation of China (11871285). 
He would like to thank Prof. van Suijlekom for informing him of \cite{CvD} as well as  \cite{Far}, and to thank Prof. Han for informing him of \cite{Han}.
Moreover, he would  like to express his gratitude to the referee for some comments leading to improvement of this paper. 

\medskip


\begin{thebibliography}{99}

\bibitem{AE}
L. Asimow and A. J. Ellis, \emph{Convexity theory and its applications in functional analysis}, London Math. Soc. Mono. \textbf{16}, Academic Press, London (1980).


\bibitem{BM} D. Blecher and B. Magajna, Dual Operator systems, Bull. Lond. Math. Soc. \textbf{43} (2011), 311-320. 

\bibitem{Bon}
F.F. Bonsall, Endomorphisms of a partially ordered vector space without order unit, J. Lond. Math. Soc. \textbf{30} (1955), 144-153. 

\bibitem{CJ}
R. Carbone and A. Jen\v{c}ov\'{a}, On period, cycles and fixed points of a quantum channel, Ann. Henri Poin. \textbf{21} (2020), 155-188. 


\bibitem{CE}
M-D Choi and E. Effros, Injectivity and operator spaces. J. Funct. Anal., \textbf{24}
(1977), 156-209.

\bibitem{CvD}
A. Connes and W.D. van Suijlekom, Spectral truncations in noncommutative geometry and operator systems, Comm. Math. Phys. \textbf{383} (2021), 2021-2067. 

\bibitem{DSW} 
R. Duan, S. Severini and A. Winter, Zero-error communication via quantum channels, non-commutative graphs, and a quantum Lov\'{a}sz number, IEEE Transactions on Information Theory, \textbf{59} (2013), 1164-1174.

\bibitem{ER} E.G. Effros and Z.-J. Ruan, \emph{Operator spaces}, London Math. Soc. Mono. New Series 23, Oxford Univ. Press, (2000).

\bibitem{Far}
D. Farenick, The operator system of Toeplitz matrices, preprint (arXiv: 2103.16546v3). 

\bibitem{GMS}
V.P. Gupta, P. Mandayam and V.S. Sunder, \emph{The Functional Analysis of Quantum Information Theory} (A Collection of Notes Based on Lectures by Gilles Pisier, K. R. Parthasarathy, Vern Paulsen and Andreas Winter), Lecture Notes in Physics \textbf{902}, Springer, (2015). 

\bibitem{Han}
K.H. Han, Matrix regular operator space and operator system, J. Math. Anal. Appl. \textbf{367} (2010), 516-521. 

\bibitem{Huang}
X.J. Huang, Approximately unital operator systems, Sci. China Math. \textbf{54} (2011), 1243-1257.

\bibitem{HN}
X.J. Huang and C.K. Ng,  An abstract characterization of unital operator spaces, J. Operator Theory \textbf{67} (2012), 289-298.


\bibitem{KPTT}
A. Kavruk, V.I. Paulsen, I.G. Todorov and M. Tomforde, Tensor products of operator systems, J. Funct. Anal. \textbf{261} (2011), 267-299.

\bibitem{LN}
J.Z. Li and C.K. Ng, Tensor products for non-unital operator systems, J. Math. Anal. Appl. \textbf{396} (2012), 601-605.

\bibitem{Ng-reg-mod} 
C.K. Ng, Regular normed bimodules, J. Oper. Theory, \textbf{56} (2006), 343-355


\bibitem{Ng-MOS}  C.K. Ng, Operator subspaces of $\CL(H)$ with induced matrix orderings, Indiana Univ. Math. J. \textbf{60} (2011), 577-610.

\bibitem{Paul}
V.I. Paulsen, Completely bounded maps on $C^*$-algebras and invariant operator ranges, Proc. Amer. Math. Soc. \textbf{86} (1982), 91-96. 

\bibitem{Weaver}
N, Weaver, Quantum graphs as quantum relations, preprint (arXiv:1506.03892). 

\bibitem{Wern-subsp} W. Werner, Subspaces of $L(H)$ that are $*$-invariant, J. Funct. Anal. 193 (2002), 207-223.

\bibitem{Wern1} W. Werner, Multipliers on matrix ordered operator spaces and some $K$-groups, J. Funct. Anal. \textbf{206} (2004), 356-378.

\bibitem{Yashin}
V.I. Yashin,  Properties of operator systems, corresponding to channels, Quantum Inf. Process \textbf{19} (2020), 195.
\end{thebibliography}
\end{document}